\numberwithin{equation}{section}
\theoremstyle{plain}
\newtheorem{thm}{Theorem}[section]
\newtheorem{defn}[thm]{Definition}
\newtheorem{prop}[thm]{Proposition}
\newtheorem{lem}[thm]{Lemma}
\newtheorem{cor}[thm]{Corollary}
\newtheorem{propdefn}[thm]{Proposition-Definition}
\newtheorem{thmdefn}[thm]{Theorem-Definition}
\theoremstyle{definition}
\newtheorem{rem}[thm]{Remark}
\newtheorem{hypo}[thm]{Hypothesis}
\newcommand{\beast}{\begin{eqnarray*}}
\newcommand{\east}{\end{eqnarray*}}
\newcommand{\N}{{\Bbb N}}
\newcommand{\Z}{{\Bbb Z}}
\newcommand{\Q}{{\Bbb Q}} 
\newcommand{\R}{{\Bbb R}}
\newcommand{\E}{{\bold E}}
\newcommand{\F}{{\bold F}}
\newcommand{\Af}{{\Bbb A}}
\newcommand{\Spec}{{\mathrm{Spec}}\,}
\newcommand{\Spf}{{\mathrm{Spf}}\,}
\newcommand{\Spm}{{\mathrm{Spm}}\,}
\newcommand{\lra}{\longrightarrow}
\newcommand{\hra}{\hookrightarrow}
\newcommand{\lla}{\longleftarrow}
\newcommand{\ti}[1]{\widetilde{#1}}
\newcommand{\ul}[1]{\underline{#1}}
\newcommand{\ol}[1]{\overline{#1}}
\newcommand{\os}{\overset}
\newcommand{\conv}{{\mathrm{conv}}}
\newcommand{\im}{{\mathrm{Im}}}
\newcommand{\id}{{\mathrm{id}}}
\newcommand{\End}{{\mathrm{End}}}
\newcommand{\res}{{\mathrm{res}}}
\newcommand{\pr}{{\mathrm{pr}}}
\newcommand{\dlog}{{\mathrm{dlog}}\,}
\newcommand{\an}{{\mathrm{an}}}
\newcommand{\Frac}{{\mathrm{Frac}}\,}
\newcommand{\cE}{{\cal E}}
\newcommand{\cO}{{\cal O}}
\newcommand{\cP}{{\cal P}}
\newcommand{\cQ}{{\cal Q}}
\newcommand{\cU}{{\cal U}}
\newcommand{\cV}{{\cal V}}
\newcommand{\cX}{{\cal X}}
\newcommand{\cY}{{\cal Y}}
\newcommand{\cZ}{{\cal Z}}
\newcommand{\fD}{{\frak D}}
\newcommand{\fU}{{\frak U}}
\newcommand{\fX}{{\mathfrak{X}}}
\newcommand{\fY}{{\mathfrak{Y}}}
\newcommand{\NID}{{\mathrm{(NID)}}}
\newcommand{\NLD}{{\mathrm{(NLD)}}}
\newcommand{\sing}{{\mathrm{sing}}}
\renewcommand{\sp}{{\mathrm{sp}}}
\newcommand{\wh}{\widehat}
\renewcommand{\res}{{\mathrm{res}}}
\renewcommand{\d}{\dagger}
\newcommand{\lam}{\lambda}
\newcommand{\pa}{\partial}
\newcommand{\Mat}{{\mathrm{Mat}}}
\newcommand{\e}{{\bold e}}
\newcommand{\f}{{\bold f}}
\renewcommand{\v}{{\bold v}}
\newcommand{\Exp}{{\mathrm{Exp}}}
\newcommand{\bP}{{\bold P}}
\renewcommand{\End}{{\mathrm{End}}}
\newcommand{\dsum}{\displaystyle\sum}
\newcommand{\dlim}{\displaystyle\lim}
\begin{document}
\title{Cut-by-curves criterion for the 
log extendability of overconvergent isocrystals}
\author{Atsushi Shiho
\footnote{
Graduate School of Mathematical Sciences, 
University of Tokyo, 3-8-1 Komaba, Meguro-ku, Tokyo 153-8914, JAPAN. 
E-mail address: shiho@ms.u-tokyo.ac.jp \, 
Mathematics Subject Classification (2000): 12H25.}}
\date{}
\maketitle

\begin{abstract}
In this paper, we prove a `cut-by-curves criterion' for 
an overconvergent isocrystal on a smooth variety 
over a field of characteristic $p>0$ to extend 
logarithmically to its smooth compactification 
whose complement is a strict normal crossing divisor, under 
certain assumption. 
This is a $p$-adic analogue of a version of cut-by-curves criterion for 
regular singuarity of an integrable connection on a smooth variety 
over a field of characteristic $0$. In the course of the proof, 
we also prove a kind of cut-by-curves criteria on solvability, 
highest ramification break and exponent of $\nabla$-modules. 
\end{abstract}

\tableofcontents

\section*{Introduction}

Let $K$ be a complete discrete valuation field of mixed characteristic 
$(0,p)$ with ring of integers $O_K$ and 
residue field $k$, and let $X \hra \ol{X}$ be an open 
immersion of smooth $k$-varieties such that $Z = \ol{X}-X = 
\bigcup_{i=1}^rZ_i$ is a 
simple normal crossing divisor. Denote the log structure on $\ol{X}$ 
associated to $Z$ by $M_{\ol{X}}$. 
Let $\Sigma = \prod_{i=1}^r\Sigma_i$ be a subset of 
$\Z_p^r$ which satisfies the conditions 
$\NID$ (non-integer difference) and 
$\NLD$ ($p$-adically non-Liouville difference). 
(For precise definition, see \cite[1.8]{sigma} or 
Subsection 1.5 in this paper.) 
In the previous paper \cite{sigma}, we introduced the notion of `having 
$\Sigma$-unipotent monodromy' for an overconvergent isocrystal on 
$(X,\ol{X})/K$ and proved that an overconvergent isocrystal on 
$(X,\ol{X})/K$ has $\Sigma$-unipotent monodromy if and only if 
it can be extended to an isocrystal on log convergent site 
$((\ol{X},M_{\ol{X}})/O_K)_{\conv}$ with exponents in $\Sigma$. 
In this paper, we prove a `cut-by-curves criterion' for 
an overconvergent isocrytal on $(X,\ol{X})/K$ to have 
$\Sigma$-unipotent monodromy. \par 
Let us give the precise statement of our main theorem. 
For an open immersion of smooth $k$-curves $C \hra \ol{C}$ 
such that $P:=\ol{C}-C = \coprod_{i=1}^s P_i$ is a simple normal 
crossing divisor (= disjoint union of closed points $P_i$ 
whose residue fields are separable over $k$), 
denote the log structure on $\ol{C}$ 
associated to $P$ by $M_{\ol{C}}$. When we are given an exact 
locally closed immersion $\iota: (\ol{C},M_{\ol{C}}) \hra 
(\ol{X},M_{\ol{X}})$ with $(\ol{C},M_{\ol{C}})$ as above, 
$\iota$ induces a well-defined morphism 
of sets $\{1,...,s\} \lra \{1,...,r\}$, which we denote also by 
$\iota$, by the rule $\iota(P_{i}) \subseteq Z_{\iota(i)}$. 
Then we define $\iota^*\Sigma$ by $\iota^*\Sigma := 
\prod_{i=1}^s\Sigma_{\iota(i)} \subseteq \Z_p^s$. 
Also, $\iota$ induces a morphism of pairs 
$(C,\ol{C}) \lra (X,\ol{X})$ which is also denoted by $\iota$ again. 
Therefore, for an overconvergent isocrystal $\cE$ on $(X,\ol{X})/K$, 
we can define the pull-back $\iota^*\cE$ of $\cE$ by $\iota$, 
which is an overconvergent isocrystal on $(C,\ol{C})/K$. 
With this notation, we can state our main theorem as follows: 

\begin{thm}\label{mainthm}
Let $K,k,X,\ol{X},M_{\ol{X}}, \Sigma$ be as above and assume that $k$ is 
uncountable. Then, for an overconvergent isocrystal $\cE$ on 
$(X,\ol{X})/K$, the following two conditions are equivalent$:$ 
\begin{enumerate}
\item 
$\cE$ has $\Sigma$-unipotent monodromy. 
\item 
For any $(C,\ol{C}), M_{\ol{C}}$ as above and for any 
exact locally closed immersion $\iota: (\ol{C},\allowbreak 
M_{\ol{C}}) \hra 
(\ol{X},M_{\ol{X}})$, $\iota^*\cE$ has 
$\iota^*\Sigma$-unipotent monodromy. 
\end{enumerate}
\end{thm}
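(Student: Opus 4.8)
The implication $(1)\Rightarrow(2)$ is the formal direction. By the main result of \cite{sigma}, condition $(1)$ means that $\cE$ extends to an isocrystal on $((\ol{X},M_{\ol{X}})/O_K)_{\conv}$ with exponents in $\Sigma$; pulling this log convergent extension back along the exact locally closed immersion $\iota$ produces a log convergent extension of $\iota^*\cE$ on $((\ol{C},M_{\ol{C}})/O_K)_{\conv}$ whose exponents along $P_i$ are the exponents of $\cE$ along $Z_{\iota(i)}$, hence lie in $\Sigma_{\iota(i)}$. Thus $\iota^*\cE$ has $\iota^*\Sigma$-unipotent monodromy, and the entire content of the theorem is the converse.

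For $(2)\Rightarrow(1)$ I would first reduce the global statement to a local one along the divisor. Having $\Sigma$-unipotent monodromy is checked one component $Z_i$ of $Z$ at a time, and near the generic point of $Z_i$ the isocrystal $\cE$ gives rise, after the standard localization, to a $\nabla$-module on the annulus transverse to $Z_i$, varying in a family parametrized by $Z_i$. The content of $\Sigma$-unipotent monodromy then splits into three conditions on the relevant fibres of this family: (a) solvability, (b) vanishing of the highest ramification break, and (c) membership of the exponents in $\Sigma_i$. A transversal curve $\iota$ meeting $Z_{\iota(i)}$ at a boundary point $P_i$ singles out the fibre over $\iota(P_i)$, so hypothesis $(2)$ supplies information about a large supply of one-dimensional fibres of each such family.

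The heart of the matter is therefore a cut-by-curves criterion for each of the three invariants: the value of solvability, of the highest ramification break, and of the exponent is already detected on the fibres over points lying on transversal curves. For each invariant I would show that, as a function on $Z_i$, it is constant on a dense open subset with generic value computed by the fibres over closed points of that open, and that those closed points are met by exact transversal curves; the uncountability of $k$ then guarantees enough such curves avoiding the countably many proper closed subvarieties of $Z_i$ along which the auxiliary constructions (denominators, break jumps) degenerate, so that $(2)$ pins down the good generic value. Moreover, since $(2)$ constrains \emph{all} transversal curves, including those through the special locus, one also rules out break jumps and exponent jumps on the closed subsets where they could occur; assembling (a)--(c) at every point of each component gives $\Sigma$-unipotent monodromy, i.e.\ condition $(1)$.

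The main obstacle will be the cut-by-curves criteria for the highest ramification break and the exponent in (b)--(c): one must control these $p$-adic differential-equation invariants in a family over $Z_i$ and transfer them faithfully between the generic fibre and the curve fibres, which is exactly where the conditions $\NID$ and $\NLD$ on $\Sigma$, together with the uncountability of $k$, do the real work. A secondary difficulty is the passage from single components to the intersection strata $Z_i\cap Z_j$: the simple normal crossing structure forces a two-variable transverse analysis there, and one must verify that the joint exponents and the joint ramification behaviour are governed by the one-component data already controlled by the curve restrictions, so that the local conditions really do propagate from the generic points of the $Z_i$ to all of $(\ol{X},M_{\ol{X}})$.
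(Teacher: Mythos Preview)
Your overall strategy for both directions matches the paper's proof, and you correctly identify the cut-by-curves criteria for highest ramification break and exponent (the paper's Theorems \ref{thm1bis} and \ref{thm2}) as the technical core. However, you misidentify two of the difficulties.

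First, your ``secondary difficulty'' about the intersection strata $Z_i\cap Z_j$ does not exist. By Definition \ref{defsigmon}, $\Sigma$-unipotent monodromy is checked on an affine open cover of $\ol{X}-Z_{\sing}$, so the entire argument takes place away from the crossings and no two-variable transverse analysis is ever needed. The paper simply localizes to the situation of Hypothesis \ref{hyp}, where $Z$ is a single smooth connected divisor with a coordinatized tubular neighborhood.

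Second, your plan to ``assemble (a)--(c) at every point of each component'' and to use curves through the special locus to ``rule out break jumps and exponent jumps on the closed subsets'' is both more than is needed and not obviously sufficient: pointwise control of the fibers does not by itself yield $\Sigma$-unipotence of the $\nabla$-module on the \emph{relative} annulus $\cQ_K\times A^1_K[\lam,1)$, which is what Definition \ref{defsigmon} actually demands. The paper instead argues only at the generic point. Once Theorem \ref{thm2} shows that the restriction $E_{\cE,L}$ to the generic annulus $A^1_L[\lam,1)$ has highest ramification break $0$ with $\Exp(E_{\cE,L})\in\ol{\Sigma}$, Proposition \ref{cmunip} converts this into $\Sigma$-unipotence of $E_{\cE,L}$, and then the generization result Proposition \ref{generization} (this is where $\NID$ and $\NLD$ do their real work) propagates $\Sigma$-unipotence from $A^1_L[\lam,1)$ to the full relative annulus. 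This generization step is the ingredient missing from your sketch; the uncountability of $k$ is used only inside Theorem \ref{thm2}, to guarantee that the countable intersection of dense opens produced there still contains separable closed points.

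Finally, your condition (a) is not a separate obstacle: solvability of both $E_{\cE,L}$ and of every curve fiber $E_{\cE,\ti{y}}$ is automatic from the overconvergence of $\cE$ (Proposition \ref{ocsolv}), so no cut-by-curves argument is required for it.
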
 

Note that Theorem \ref{mainthm} is a $p$-adic analogue of a version of 
cut-by-curves criterion for 
regular singularity of an integrable connection on a smooth variety 
over a field of characteristic $0$, which is shown in 
\cite[II 4.4]{deligne}, \cite[I 3.4.7]{ab}. \par 
Here we briefly sketch the proof. Since the implication 
(1)$\,\Longrightarrow\,$(2) is a rather easy consequence of 
the results in \cite{sigma}, the essential part is to prove 
(1) assuming (2). The condition (1) is equivalent to the condition 
that, for any $i$, the $\nabla$-module associated to $\cE$ 
on $p$-adic annulus around the generic point of $Z_i$ has highest 
ramification break $0$ and that any entry of its exponent 
(in the sense of Christol-Mebkhout) is contained in $\Sigma_i$. 
On the other hand, the condition (2) implies that, 
for any $i$ and for any separable closed point $z \in Z$, 
the $\nabla$-module associated to $\cE$ 
on $p$-adic annulus around $z$ transverse to $Z_i$ has 
highest ramification $0$ and that any component of its exponent 
is contained in $\Sigma_i$. We will prove the latter 
condition implies the former. In fact, we will prove 
stronger assertions: We prove a kind of `cut-by-curves criteria' 
on solvability, highest ramification break and 
exponent of $\nabla$-modules. \par 
The content of each section is as follows: In Section 1, 
we give a review of the notions related to 
(log-)$\nabla$-modules on rigid analytic spaces 
due to Christol, Mebkhout, Dwork and Kedlaya which we need 
for the proof and recall the results in our previous paper 
\cite{sigma}. In Section 2, we prove cut-by-curves criteria 
for solvability, highest ramification break and exponent of 
$\nabla$-modules and using these, 
we give a proof of the main theorem. We also prove a variant of the 
main theorem (Corollary \ref{maincor}) 
which treats the case where $k$ is not necessarily 
uncountable. \par 
The author would like to thank to Fabien Trihan, who suggested to 
include Corollary \ref{maincor}. 
The author is partly supported by Grant-in-Aid for Young Scientists (B), 
the Ministry of Education, Culture, Sports, Science and Technology of 
Japan and JSPS Core-to-Core program 18005 whose representative is 
Makoto Matsumoto. \par

\section*{Convention}

Throughout this paper, $K$ is a fixed complete discrete 
valuation field with residue field $k$ of 
characteristic $p>0$ and let $|\cdot|: K \lra \R_{\geq 0}$ be the 
fixed absolute value on $K$. Let 
$\Gamma^*$ be $(\Q \otimes_{\Z} |K^{\times}|) \cup \{0\}$ and let 
$O_K$ be the the ring of integers of $K$. 
For a $p$-adic formal scheme $\cP$ topologically of finite type over $O_K$, 
we denote the associated rigid space by $\cP_K$. 
A $k$-variety means a reduced separated scheme of finite type 
over $k$. A closed point in a $k$-variety $X$ is called a separable 
closed point if its residue field is separable over $k$. \par 
We use freely the notion 
concerning isocrystals on log convergent site and overconvergent 
isocrystals. For the former, see \cite{pi1II}, \cite{relI} and 
\cite[\S 6]{kedlayaI}. For the latter, see \cite{berthelotrig} and 
\cite[\S 2]{kedlayaI}. 

\section{Preliminaries}

In this section, 
we give a review of several notions and facts related to 
(log-)$\nabla$-modules on rigid analytic spaces 
due to Christol, Mebkhout, Dwork and Kedlaya which we need 
for the proof of our main result. Also, we recall 
the results in our previous paper \cite{sigma}. \par 

\subsection{Differential modules}

First we give a definition of differential modules and cyclic vectors, 
following 
\cite[1.1.4]{kedlayaswanI}: 

\begin{defn}\label{def:dm}
Let $L$ be a field of characteristic $0$ endowed with 
non-Archimedean norm $|\cdot|$ and a non-zero differential $\pa$. 
\begin{enumerate}
\item 
A differential module on $L$ is a finite-dimensional 
$L$-vector space $V$ endowed with an action of $\pa$ satisfying 
the Leibniz rule. 
\item 
Let $V$ be a differential module on $L$. $\v\in V$ is called 
a cyclic vector if and only if $\v, \pa(\v), ..., \pa^{\dim V -1}(\v)$ 
forms a basis of $V$ as $L$-vector space. 
\end{enumerate}
\end{defn} 

It is known that there exists a cyclic vector for any non-zero 
differential module $V$ on $L$. \par 
When $V$ is a differential module on $L$, we can define the operator 
norm $|\pa^n|_V$ of $\pa^n$ on $V$ ($n \in \N$) by fixing a basis of $V$. 
Then we define 
the spectral norm $|\pa|_{V,\sp}$ of $\pa$ on $V$ by 
$|\pa|_{V,\sp} := \lim_{n\to\infty} |\pa^n|^{1/n}$. 
(In particular, we can define $|\pa|_L, |\pa|_{L,\sp}$.) \par 
By using cyclic vector, we can calculate $|\pa|_{V,\sp}$ 
in certain case by the following proposition: 

\begin{prop}[{\cite[Theorem 1.5]{cd}}]
Let $V$ be a differential module on $L$, $\v$ a cyclic vector 
of $V$ and assume that we have the equation 
$\pa^{\dim V}(\v) = \sum_{i=0}^{\dim V-1}a_i\pa^i(\v) \allowbreak 
\,(a_i \in L)$. 
Let $r$ be the least slope of the lower convex hull of the set 
$\{(-i,-\log |a_i|) \,\vert\, \allowbreak 0 \leq i \leq \dim V-1 \}$ in 
$\R^2$. Then we have 
$\max (|\pa|_L, |\pa|_{V,\sp} ) = \max (|\pa|_L, e^{-r})$. 
\end{prop}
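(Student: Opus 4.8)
The plan is to use the cyclic vector $\v$ to convert the computation of $|\pa|_{V,\sp}$ into a matrix problem, and then to estimate the spectral norm by a single well-chosen change of basis. Writing $d=\dim V$ and $e_i=\pa^i(\v)$ for $0\le i\le d-1$, the hypothesis shows that $\{e_0,\dots,e_{d-1}\}$ is an $L$-basis and that, with respect to it, $\pa$ sends a coordinate vector $(f_i)_i\in L^d$ to $(\pa(f_i))_i+G(f_i)_i$, where $G$ is the companion matrix whose subdiagonal entries are $1$ and whose last column is $(a_0,\dots,a_{d-1})^{\mathrm{t}}$. Thus $\pa$ corresponds to the additive operator $\pa I+G$ on $L^d$ equipped with the supremum norm. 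The first step is the bookkeeping identity $e^{-r}=\rho$, where $\rho:=\max_{0\le i\le d-1}|a_i|^{1/(d-i)}$ is the largest absolute value of an eigenvalue of $G$ (equivalently of a root of $T^d-\sum_i a_iT^i$): this is immediate once one includes the vertex coming from the monic leading term, namely $(-\dim V,0)$, and reads off the least slope of the Newton polygon. With this identity the asserted equality reduces to the two estimates $|\pa|_{V,\sp}\le\max(|\pa|_L,\rho)$ (always) and $|\pa|_{V,\sp}\ge\rho$ (needed only when $\rho>|\pa|_L$).

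For the upper bound --- which I regard as the heart of the matter --- I would rescale the basis by setting $e'_i=t^ie_i$ for a scalar $t$ with $|t|=\rho^{-1}$, enlarging $L$ if necessary to realize this value (a harmless operation, since the spectral norm is unchanged by a complete extension of the base and is in any case independent of the chosen basis and of the equivalent norm on $V$). A direct computation shows that in the new coordinates $\pa$ becomes $\pa I+D+G'$, where $G'_{ij}=G_{ij}t^{j-i}$ and $D$ is the diagonal matrix with entries $i\,\pa(t)/t$. The defining inequality $|a_i|\le\rho^{d-i}$ gives $|G'|\le\rho$ entrywise, while $|i|\le 1$ and $|\pa(t)|\le|\pa|_L|t|$ give $|D|\le|\pa|_L$; hence the operator norm of $\pa$ in this basis is at most $\max(|\pa|_L,\rho)$. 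Since the spectral norm is bounded above by the operator norm in any basis (by submultiplicativity of $|\pa^n|$), the desired upper bound follows.

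For the lower bound, needed when $\rho>|\pa|_L$, I would work in the same rescaled basis and expand $\pa^n=(E+G')^n$ with $E:=\pa I+D$ of norm $\le|\pa|_L<\rho$. The purely algebraic term $G'^n$ has norm growing like $\rho^n$, by the spectral-radius formula $\lim_n|G'^n|^{1/n}=\rho$ for matrices over a complete non-Archimedean field. Every remaining term carries at least one factor of $E$, and the non-commutativity only produces commutators $[\,\pa I,G'\,]=\pa(G')$ of norm $\le|\pa|_L\,\rho$; since $|\pa|_L<\rho$, all such terms are ultrametrically dominated by $G'^n$ for $n$ large. Therefore $|\pa^n|_V=|G'^n|$ eventually, and passing to $\lim_n|\cdot|^{1/n}$ yields $|\pa|_{V,\sp}\ge\rho$.

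The main obstacle is the interaction between the derivation and the linear part. In the upper bound this is precisely the verification that rescaling --- which is what forces the matrix norm $|G'|$ down to the spectral radius $\rho$ --- costs only $|\pa|_L$ in the derivation term, through $\pa(t^i)/t^i=i\,\pa(t)/t$, rather than something larger; in the lower bound it is the ultrametric domination of the mixed terms, where the commutators $\pa(G')$ must be controlled by $|\pa|_L\,\rho$. The only other point requiring care is the reduction to a base field containing an element of absolute value $\rho^{-1}$ together with the corresponding invariance of the spectral norm, which I would dispose of either by a base extension or by approximating $\rho^{-1}$ within the value group and letting the error tend to $0$.
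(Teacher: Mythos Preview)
The paper does not prove this proposition; it simply quotes the result from Christol--Dwork \cite[Theorem 1.5]{cd}. So there is no ``paper's proof'' to compare against, and your proposal must stand on its own.

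Your upper bound is clean and correct: after the rescaling $e'_i=t^ie_i$ with $|t|=\rho^{-1}$, the matrix $G'$ has all entries of norm $\le\rho$, the diagonal correction $D$ has norm $\le|\pa|_L$, and hence the operator norm of $\pa$ in this basis is $\le\max(|\pa|_L,\rho)$. This is exactly the classical argument.

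Your lower bound, however, has a genuine gap. You write $|\pa^n-G'^n|\le|\pa|_L\,\rho^{n-1}$ (correct), and then want to conclude $|\pa^n|=|G'^n|$ by ultrametric domination. For that you need $|G'^n|>|\pa|_L\,\rho^{n-1}$. But the spectral radius formula you invoke gives only $\lim_n|G'^n|^{1/n}=\rho$, which allows $|G'^n|$ to be as small as $(\rho-\epsilon)^n$; then the inequality $(\rho-\epsilon)^n>|\pa|_L\,\rho^{n-1}$ fails for large $n$. What is actually needed is the \emph{exact} equality $|G'^n|=\rho^n$ for all $n$. This is true, but for a different reason: set $H=tG'$, so $|H|\le 1$, and observe that the reduction $\bar H$ over the residue field is a companion matrix with subdiagonal $1$ and at least one nonzero entry in the last column (since $|a_{i_0}|=\rho^{d-i_0}$ for some $i_0$), hence $\bar H$ is not nilpotent. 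Therefore $\bar H^n\ne 0$, so $|H^n|=1$ and $|G'^n|=\rho^n$ for every $n$. With this in hand your domination argument goes through; without it, it does not. (One can sharpen this further to $|G'^n e'_0|=\rho^n$ by noting that $\bar e'_0$ is cyclic for $\bar H$, which lets you evaluate on a single vector.)

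A minor remark: you correctly note that the identity $e^{-r}=\rho$ requires adjoining the vertex $(-\dim V,0)$ from the monic leading term; as literally stated in the paper the set of points omits it, so this is a point worth flagging rather than silently fixing.
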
 

\subsection{$\nabla$-modules and generic radii of convergence}

Let $K$ be as in Convention and let 
$L$ be a field of characteristic $0$ containing $K$ 
complete with respect to a norm 
(denoted also by $|\cdot|$) which extends the given absolute 
value of $K$. We define the notion of $\nabla$-modules on 
rigid spaces over $L$ as follows, following {\cite[2.3.4]{kedlayaI}}:

\begin{defn} 
Let $f:\fX \lra \fY$ be a morphism of rigid spaces over $L$. A 
$\nabla$-module on $\fX$ relative to $\fY$ is a coherent $\cO_{\fX}$-module 
$E$ endowed with an integrable $f^{-1}\cO_{\fY}$-linear connection 
$\nabla: E \lra E \otimes_{\cO_{\fX}} \Omega^1_{\fX / \fY}$. In the case 
$\fY=\Spm L$, we omit the term `relative to $\fY$'. 
\end{defn} 

We call a subinterval $I \subseteq [0,\infty)$ 
aligned if any endpoint of $I$ at which it is closed is contained in 
$\Gamma^*$. For an aligned interval $I$, we 
define the rigid space $A^n_L(I)$ by  
$A^n_L(I) := \{(t_1, ..., t_n) \in {\Bbb A}^{n,\an}_L 
\,\vert\, \forall i, |t_i| \in I \}.$ \par 
For $\rho \in [0,+\infty)$, we denote the completion of 
$L(t)$ with respect to $\rho$-Gauss norm $|\cdot|_{\rho}$ 
by $L(t)_{\rho}$. This is 
a differential field with continuous differential operator $\pa$ 
satisfying $\pa(t)=1$. It is easy to show the equalities 
$|\pa|_{\rho} = \rho^{-1}$, $|\pa|_{\rho,\sp}=p^{-1/(p-1)}\rho^{-1}$. 
For any closed aligned interval 
$I$ containing $\rho$, we have the 
natural inclusion $\Gamma(A^1_L(I),\cO) \hra L(t)_{\rho}$. 
For an aligned interval $I$, a $\nabla$-module $E$ on 
$A^1_L(I)$ and $\rho \in I$, we define the differential module 
$E_{\rho}$ 
on $L(t)_{\rho}$ by 
$E_{\rho}:= \Gamma(A^1_L(I'),E) \otimes_{\Gamma(A^1_L(I'),\cO)} 
L(t)_{\rho}$ endowed with the action 
\begin{equation}\label{partial}
 \pa: E_{\rho} \os{\nabla_{\rho}}{\lra} E_{\rho}dt \lra E_{\rho}, 
\end{equation}
where $I'$ is any closed aligned subinterval of $I$ containing 
$\rho$, $\nabla_{\rho}$ is the connection on $E_{\rho}$ induced by 
$\nabla$ and the last map in \eqref{partial} is the map 
$xdt \mapsto x$. Then we define the notion of generic 
radius of convergence as follows (\cite[4.1]{cmsurvey}): 

\begin{defn}
Let $I \subseteq [0,+\infty)$ be an aligned interval and 
let $E$ be a $\nabla$-module on $A^1_L(I)$. Then we define the 
generic radius of convergence $R(E,\rho)$ of $E$ at the radius $\rho$ by 
$$ R(E,\rho) := 
\rho |\pa|_{L(t)_{\rho},\sp}/|\pa|_{E_{\rho},\sp} = 
p^{-1/(p-1)}/|\pa|_{E_{\rho},\sp}. $$
If we fix a basis $\e := (\e_1,...,\e_{\mu})$ of $E_{\rho}$ and define 
$G_n$ to be the matrix expression of $\pa^n$ with respect to the basis 
$\e$, then we have the following equivalent definition$:$ 
$$ R(E,\rho) = \min(\rho, \varliminf_{n\to\infty}|G_n/n!|_{\rho}^{-1/n}). $$
$E$ is said to satisfy the Robba condition if $R(E,\rho)=\rho$ for any 
$\rho \in I$. 
\end{defn}

As for the behavior of $R(E,\rho)$, the following proposition is known. 

\begin{prop}[{\cite[8.6]{cmsurvey}}]\label{f}
Let $I \subseteq [0,+\infty)$ be an aligned interval and 
let $E$ be a $\nabla$-module on $A^1_L(I)$ of rank $\mu$. 
Let $f: -\log I = \{-\log x \,\vert\, x \in I\} \lra \R$ be the 
function defined by $f(r):=-\log R(E,e^{-r})$. Then $f$ is continuous 
piecewise affine linear convex function whose slopes are in 
$\Z\!\cdot\!(\mu!)^{-1}$. 
\end{prop}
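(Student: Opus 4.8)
The plan is to prove Proposition~\ref{f} by studying the function $f(r) = -\log R(E, e^{-r})$ through the local structure of the $\nabla$-module and the spectral-norm computation afforded by the cyclic vector theorem. First I would reduce the statement to a computation at the level of the differential module $E_\rho$ over the Gauss-norm field $L(t)_\rho$. Fixing $\rho = e^{-r}$ and writing $R(E,\rho) = p^{-1/(p-1)}/|\pa|_{E_\rho,\sp}$, the problem becomes understanding how the spectral norm $|\pa|_{E_\rho,\sp}$ varies as $\rho$ (equivalently $r$) moves through $-\log I$. The rank being $\mu$, the differential module $E_\rho$ has dimension $\mu$, so I would invoke the existence of a cyclic vector $\v$ and write $\pa^\mu(\v) = \sum_{i=0}^{\mu-1} a_i \pa^i(\v)$ with $a_i \in L(t)_\rho$.

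The key step is to apply the Christol--Dwork formula, Theorem~1.5 of \cite{cd} quoted above, which computes $\max(|\pa|_{L(t)_\rho}, |\pa|_{E_\rho,\sp})$ as $\max(|\pa|_{L(t)_\rho}, e^{-s})$ where $s$ is the least slope of the lower convex hull of the points $\{(-i, -\log|a_i|_\rho) : 0 \le i \le \mu-1\}$. The crucial observation is that each coefficient $a_i$ is a rational function in $t$, so $\log|a_i|_\rho$ is itself a continuous piecewise affine linear convex function of $r = -\log\rho$ (this is the standard Newton-polygon behavior of Gauss norms: $|g|_{e^{-r}}$ has $\log$ that is piecewise affine with integer slopes for any $g \in L(t)$). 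Since $|\pa|_{L(t)_\rho} = \rho^{-1} = e^{r}$ is affine in $r$, and $-\log R(E,\rho) = \log|\pa|_{E_\rho,\sp} + 1/(p-1)\cdot\log p$, I would combine these to express $f$ in terms of a maximum of affine functions and the lower convex hull of finitely many piecewise-affine convex functions.

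The main obstacle I expect is twofold. The subtler difficulty is controlling the denominators of the slopes: the Christol--Dwork formula gives slopes that are differences of the $-\log|a_i|_\rho$ divided by integer spacings in the $i$-coordinate, ranging over $0 \le i \le \mu-1$. Tracking these carefully to show the resulting slopes of $f$ lie in $\Z\cdot(\mu!)^{-1}$ rather than merely in $\Q$ requires a bookkeeping argument: the slopes of each $\log|a_i|_\rho$ are integers, and taking lower convex hulls and least-slope operations over index gaps of size at most $\mu$ introduces denominators dividing $\mu!$. The other, more structural obstacle is that the cyclic vector $\v$ and hence the coefficients $a_i$ depend on $\rho$; a single cyclic vector valid on all of $I$ need not exist, so I would argue locally near each $\rho_0 \in I$ using a cyclic vector for $E_{\rho_0}$, establish the piecewise-affine convexity and slope bound on a neighborhood, and then glue. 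Convexity and continuity are preserved under such gluing, so the global statement follows.

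An alternative to the cyclic-vector approach, which I would keep in reserve if the slope bookkeeping becomes unwieldy, is to appeal directly to the convexity properties of $R(E,\rho)$ as developed by Christol--Mebkhout via the decomposition of $E_\rho$ into its spectral components; this is in fact the route underlying the cited reference \cite[8.6]{cmsurvey}, and for the purposes of this paper I would simply cite it, using the cyclic-vector reduction only to make the rationality and slope-denominator claim transparent.
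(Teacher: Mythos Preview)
The paper does not give a proof of this proposition at all: it is stated with the citation \cite[8.6]{cmsurvey} and used as a black box. Your sketch is a reasonable outline of the argument behind that reference (cyclic vector plus the Christol--Dwork spectral-norm formula, with local gluing), and you yourself observe in your final paragraph that for the purposes of this paper one may simply cite the result --- which is precisely what the paper does.

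One small inaccuracy worth flagging in your sketch: when you take a cyclic vector for $E_\rho$ over $L(t)_\rho$, the resulting coefficients $a_i$ lie in $L(t)_\rho$, not in $L(t)$, so they are not literally rational functions in $t$; to get the piecewise-affine integer-slope behavior of $r \mapsto -\log|a_i|_{e^{-r}}$ you need to choose the cyclic vector over the ring of analytic functions on a closed aligned subannulus (where $E$ is free), so that the $a_i$ are Laurent series convergent on that subannulus. You do address this via the local argument, but the phrase ``rational function in $t$'' is not quite right.
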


For $\lam \in [0,1)\cap\Gamma^*$, we call a $\nabla$-module $E$ on 
$A_L^1[\lam,1)$ solvable if $\displaystyle\lim_{\rho\to1^-}R(E,\rho)=1$. 
As a corollary to Proposition \ref{f}, we have the following: 

\begin{cor}\label{elementary}
Let $\lam \in [0,1)\cap\Gamma^*$ and let $E$ be a $\nabla$-module on 
$A_L^1[\lam,1)$. Then$:$ 
\begin{enumerate}
\item
The following conditions are equivalent.
\begin{enumerate}
\item 
$E$ is solvable. 
\item 
There exist $b \geq 0$ and a strictly 
increasing sequence $\{\rho_m\}_{m \in \N} 
\subseteq 
[\lam, 1)$ with 
$\dlim_{m\to\infty}\allowbreak \rho_m =1$ such that $R(E,\rho_m)=\rho_m^{b+1}$ 
for all $m \in \N$. 
\item 
There exists $b \geq 0$ 
such that $R(E,\rho)=\rho^{b+1}$ for any $\rho$ sufficiently close to $1$. 
\end{enumerate}
If these conditions are satisfied, we call $b$ the highest ramification 
break of $E$. 
\item 
If $E$ is solvable, the following are equivalent. 
\begin{enumerate}
\item 
$E$ has highest ramification break $b$. 
\item
There exist $\rho_1, \rho_2 \in [\lam,1)$ with $\rho_1 < \rho_2$ 
such that $R(E,\rho_i)=\rho_i^{b+1}\,(i=1,2)$. 
\end{enumerate}
\item 
The following conditions are equivalent.
\begin{enumerate}
\item 
$E$ is not solvable. 
\item 
There exists a strictly 
increasing sequence $\{\rho_m\}_{m \in \N} 
\subseteq [\lam, 1)$ with 
$\dlim_{m\to\infty}\allowbreak \rho_m =1$ such that 
$\displaystyle\varlimsup_{m\to\infty} R(E,\rho_m) < 1$. 
\item 
For any strictly 
increasing sequence $\{\rho_m\}_{m \in \N} 
\subseteq [\lam, 1)$ with 
$\dlim_{m\to\infty}\allowbreak \rho_m =1$, 
$\displaystyle\varlimsup_{m\to\infty} R(E,\rho_m) < 1$. 
\end{enumerate}
\end{enumerate}
\end{cor}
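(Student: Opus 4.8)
The entire corollary is an exercise in analyzing the convex function $f(r) = -\log R(E,e^{-r})$ of Proposition \ref{f} near the left endpoint $r=0^+$ of its domain $-\log I$, where $I=[\lambda,1)$ and $r\to 0^+$ corresponds to $\rho\to 1^-$. The plan is to first prove a dichotomy for the behaviour of $f$ near $0$. Since $f$ is convex, its right slope $f'_+$ is nondecreasing, and since by Proposition \ref{f} every slope of $f$ lies in the discrete set $\Z\cdot(\mu!)^{-1}$, there are only two possibilities. Either (Case A) $f'_+(r)$ stays bounded below as $r\to 0^+$, in which case a nondecreasing slope taking values in a discrete set must be \emph{eventually constant}, so $f(r)=m_0 r + c$ on some interval $(0,\epsilon)$ with $m_0\in\Z\cdot(\mu!)^{-1}$; or (Case B) $f'_+(r)\to-\infty$, which forces $f(r)\to+\infty$ as $r\to0^+$ (the step function $f'_+$ is eventually $\leq -N$ for every $N$, so the integral defining $f$ diverges). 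In particular $\dlim_{r\to0^+}f(r)$ always exists in $[0,+\infty]$, i.e. $\dlim_{\rho\to1^-}R(E,\rho)$ exists in $[0,1]$. Ruling out an accumulation of breakpoints at the open endpoint $0$ is exactly where convexity together with the discreteness of the slopes is essential, and this is the one genuinely nontrivial ingredient; the rest is bookkeeping with chords and supporting lines. Throughout I will use the elementary bound $R(E,\rho)\leq\rho$, equivalently $f(r)\geq r$, which is immediate from the definition of $R(E,\rho)$ as a minimum involving $\rho$.

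For assertion (1), solvability means precisely $\dlim_{r\to0^+}f(r)=0$, which by the dichotomy forces Case A with $c=0$; then $f(r)\geq r$ gives $m_0\geq 1$, so setting $b:=m_0-1\geq 0$ we obtain $f(r)=(b+1)r$ near $0$, i.e. $R(E,\rho)=\rho^{b+1}$ for $\rho$ close to $1$. This is (1)(c), and (1)(c)$\Rightarrow$(1)(b)$\Rightarrow$(1)(a) are immediate (choose any increasing sequence inside the affine range, and note $\rho^{b+1}\to 1$). The substantive implication is (1)(b)$\Rightarrow$(1)(a): the hypothesis puts the points $(r_m,(b+1)r_m)$ on the graph of $f$ with $r_m=-\log\rho_m\to0^+$, which excludes Case B (there $f(r_m)\to+\infty$) and, in Case A, forces $c=((b+1)-m_0)r_m\to 0$, hence $c=0$ and solvability.

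For assertion (2), the implication (a)$\Rightarrow$(b) is immediate from the affine description in (1)(c) (take two points close to $1$). For (b)$\Rightarrow$(a), let $b'$ denote the highest ramification break, so by (1) $f(r)=(b'+1)r$ near $0$, and put $r_i=-\log\rho_i$, so that $r_1>r_2$. Both graph points $(r_i,(b+1)r_i)$ lie on the line $y=(b+1)x$ through the origin, which is therefore the secant through them; since a convex function lies on or above the secant when extrapolated outside the segment, $f(r)\geq(b+1)r$ for $r<r_2$, and comparing with $f(r)=(b'+1)r$ for small $r$ gives $b'\geq b$. Conversely, the leftmost affine piece of $f$ has minimal slope $b'+1$, so its extension is a global supporting line and $f(r)\geq(b'+1)r$ for all $r$; evaluating at $r_2$ yields $(b+1)r_2\geq(b'+1)r_2$, i.e. $b\geq b'$. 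Hence $b=b'$.

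Finally, assertion (3) is the complement of solvability read through the dichotomy. Failure of solvability occurs exactly when $\dlim_{r\to0^+}f(r)=L$ with $L\in(0,+\infty]$ (Case A with $c=L>0$, or Case B with $L=+\infty$), and in both cases $\dlim_{\rho\to1^-}R(E,\rho)=e^{-L}<1$ with the convention $e^{-\infty}=0$. Since this limit exists, every increasing sequence $\rho_m\to 1$ satisfies $\varlimsup_{m\to\infty}R(E,\rho_m)=e^{-L}<1$, which gives (a)$\Rightarrow$(c); the implication (c)$\Rightarrow$(b) is trivial, and (b)$\Rightarrow$(a) holds because solvability would force $R(E,\rho_m)\to 1$ along any such sequence, contradicting $\varlimsup<1$. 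The key obstacle in the whole argument remains the eventual-affineness of $f$ near $r=0$ established at the outset; once that is in hand, all three parts reduce to the convexity estimates above.
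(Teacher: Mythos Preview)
Your proof follows essentially the same strategy as the paper's---analyzing the convex function $f(r)=-\log R(E,e^{-r})$ via Proposition~\ref{f}---but organized around an up-front dichotomy (Case A: right slope bounded below near $0$, hence eventually affine; Case B: slope $\to-\infty$) rather than the paper's part-by-part treatment. This is a clean way to package the argument.

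There is, however, one incorrect step. In Case B you assert that $f(r)\to+\infty$ as $r\to0^+$, on the grounds that ``the integral defining $f$ diverges.'' This is false for convex piecewise-affine functions in general: take breakpoints $t_k=2^{-k}$ with slope $-k$ on $(t_{k+1},t_k)$; this is convex with integer slopes tending to $-\infty$, yet $f(t_n)=f(t_0)+\sum_{k<n}k\cdot 2^{-k-1}$ converges to a finite limit. So the claimed divergence does not hold, and your parenthetical justification is wrong.

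Fortunately this does not break the proof, because the only fact you actually use about Case B is that $E$ is not solvable there, i.e.\ $\lim_{r\to0^+}f(r)>0$. That follows directly from the supporting-line inequality together with $f(r)\geq r$: choose any $r_0$ with $m_0:=f'_+(r_0)<0$; then for $r<r_0$ one has $f(r)\geq f(r_0)+m_0(r-r_0)$, and letting $r\to0^+$ gives $\lim_{r\to0^+}f(r)\geq f(r_0)-m_0 r_0>f(r_0)\geq r_0>0$. With this correction your dichotomy becomes ``Case A: $f(r)=m_0r+c$ near $0$; Case B: $\lim_{r\to0^+}f(r)\in(0,+\infty]$,'' and every subsequent step you wrote goes through verbatim (in part (3), simply replace ``Case B with $L=+\infty$'' by ``Case B with $L\in(0,+\infty]$''). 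The paper sidesteps this issue by never isolating Case B: in (1)(a)$\Rightarrow$(c) it first shows directly from solvability that every slope of $f$ is $\geq 1$, and in (3)(a)$\Rightarrow$(c) it exhibits an affine piece $pr+q$ with $q>0$ and bounds $f$ below by its extension---the same supporting-line idea.
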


\begin{proof}
Although the proof is well-known, we give a proof for the reader's 
convenience. Let us put $f(r):=-\log R(E,e^{-r})$. \par 
First we prove (1). (c)$\,\Longrightarrow\,$(a) and 
(c)$\,\Longrightarrow\,$(b) are obvious. 
Let us prove (b)$\,\Longrightarrow\,$(c). Put $r_m := -\log \rho_m$. 
Then we have $f(r)=(b+1)r$ for $r \in (0,r_1]$: Indeed, 
if we have $f(r)>(b+1)r$ for some $r \in (0,r_1]$ with $r_m < r$, 
this contradicts the convexity of $f$ on $[r_m,r_1]$. On the other hand, 
if we have $f(r)<(b+1)r$ for some $r \in (0,r_1]$, 
this contradicts the convexity of $f$ on $[r,r_0]$. So we have proved (c). 
\par 
We prove (a)$\,\Longrightarrow\,$(c). If the slope of $f$ is 
less than $1$ at some $r_0 \in (0,-\log \lam]$, then $f(r)=pr+q$ for 
some $p<1, q \in \R$ around $r_0$. By definition of $f$, we have 
$r_0 \leq f(r_0) = pr_0+q$. So $q \geq (1-p)r_0>0$. Then, by convexity of 
$f$, we obtain the inequality $\displaystyle\varlimsup_{r\to0^+}f(r) \geq 
\displaystyle\varlimsup_{r\to0^+}(pr+q) = q >0$ 
and this contradicts the solvability of 
$E$. So the slope of $f$ is equal to or greater than $1$ on 
$(0,- \log \lam]$. Since the slope of $f$ is always in 
$\Z\!\cdot\!(\mu!)^{-1}$ and 
decreasing as $r\to 0^+$, it is stationary around $0$, that is, 
we have $f(r) = pr+q$ for some $p\geq 1,q \in \R$ on some $(0,r_1]$. 
Then, by the solvability of $E$, we have $p\geq 1$ and $q=0$, which imply (c). 
So we have proved (1).\par 
Next we prove (2). (a)$\,\Longrightarrow\,$(b) is obvious. 
Let us prove (b)$\,\Longrightarrow\,$(a). Let us assume (b) and assume that 
$E$ has highest ramification break $b'$. 
Put $r_i := - \log \rho_i$ so that $f(r_i)=(b+1)r_i\,(i=1,2)$. Then, 
if we have $b'<b$, there exists $r_3<r_2$ with $f(r_3)<(b+1)r_3$ and 
this contradicts the convexity of $f$ on $[r_3,r_1]$. On the other hand, 
if we have $b'>b$, there exists $r_4<r_3<r_2$ with $f(r_i)=(b'+1)r_i\,(i=3,4)$ 
and this contradicts the convexity of $f$ on $[r_4,r_2]$. Hence we have 
$b'=b$, that is, the assertion (a). So we have proved (2). \par 
Finally we prove (3). Since 
 (c)$\,\Longrightarrow\,$(b) and (b)$\,\Longrightarrow\,$(a) are obvious, 
it suffices to prove (a)$\,\Longrightarrow\,$(c). To do this, first we 
prove the claim that $f(r)$ has the form $f(r)=pr+q$ for some $p\in\R, q>0$ on 
some $[s,t] \subseteq [0,-\log \lambda)\,(s<t)$. 
If the slope of $f$ is less than 
$1$ at some $r_0 \in (0,-\log \lam]$, we have 
$f(r)=pr+q$ for some $p<1, q>0$ around $r_0$, as we saw in the proof of (1). 
Hence the claim is true in this case. If the slope of 
$f$ is equal to or greater than $1$ on $(0,- \log \lam]$, 
we have $f(r) = pr+q$ for some $p\geq 1, q \in \R$ on some $(0,r_1]$, which 
we also saw in the proof of (1). Then, if we have $q<0$, 
we have $(1-p)r > q$ for some $r$ and this implies the inequality 
$r>f(r)$, which contradicts the definition of $f$. Also, if $q=0$, 
then $E$ would be solvable and this is also contradiction. Hence we 
have $q>0$ and the claim is true also in this case. \par 
By the above claim and the convexity of $f$, we have 
$f(r) \geq pr+q$ for $r \in (0,s]$. Hence, if we put $r_m := -\log \rho_m$, 
we have $\displaystyle\varliminf_{m\to\infty}f(r_m) 
\geq \displaystyle\varliminf_{m\to\infty}(pr_m+q) 
= q >0$. This implies the assertion (a) and so we are done. 
\end{proof}

\subsection{Frobenius antecedent}

Let 
$L$ be a field of characteristic $0$ containing $K$ and 
a primitive $p$-th root of unity $\zeta$ 
complete with respect to a norm 
(denoted also by $|\cdot|$) which extends the given absolute 
value of $K$. For an aligned interval $I \subseteq [0,1)$, 
let us put $I^p := \{a^p \,\vert\, a \in I\}$ and 
let $\varphi_L: A_L^1(I) \lra A_L^1(I^p)$ be the morphism over $L$ 
induced by $t \mapsto t^p$, 
where $t$ denotes the coordinate of the annuli. 

\begin{propdefn}[{\cite[10.4.1, 10.4.4]{kedlayabook}}]\label{frobant}
Let $I \subseteq (0,+\infty)$ be a closed aligned interval and let $E_L$ be a 
$\nabla$-module on $A^1_L(I)$ satisfying $R(E_L,\rho) > p^{-1/(p-1)}\rho$ 
for any $\rho \in I$. Then there exists a unique $\nabla$-module $F_L$ 
on $A_L^1(I^p)$ such that $R(F_L,\rho) > p^{-p/(p-1)}\rho$ for any 
$\rho \in I^p$ and that $\varphi_L^*F_L = E_L$. Moreover, we have 
$R(F_L,\rho^p)= R(E_L,\rho)^p$ for any $\rho \in I$. 
We call this $F_L$ the Frobenius antecedent of $E_L$. 
\end{propdefn}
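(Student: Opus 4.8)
The plan is to realise $\varphi_L$ as a finite étale Galois cover and to obtain $F_L$ by descent, with the descent datum produced by the connection itself. Since $I \subseteq (0,+\infty)$ the origin is excluded, and since $L$ contains a primitive $p$-th root of unity $\zeta$, the map $\varphi_L\colon A^1_L(I) \lra A^1_L(I^p)$, $t \mapsto t^p$, is a finite étale Galois cover with group $\mu_p = \langle \zeta \rangle$ acting by $t \mapsto \zeta t$. Consequently a $\nabla$-module on $A^1_L(I^p)$ is the same thing as a $\nabla$-module on $A^1_L(I)$ equipped with a $\mu_p$-equivariant structure compatible with $\nabla$, and the whole problem reduces to producing such a structure on $E_L$, checking its cocycle condition, and computing the generic radius of convergence of the descended object.

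For existence I would first construct, for each $g=\zeta^{j}\in\mu_p$, an isomorphism $\psi_g\colon g^*E_L \xrightarrow{\sim} E_L$ of $\nabla$-modules by parallel transport. At a generic point $t_0$ of the circle of radius $\rho$ the horizontal sections of $E_L$ converge on the disc of radius $R(E_L,\rho)$ about $t_0$; since the Galois translate $\zeta t_0$ satisfies $|t_0-\zeta t_0| = |1-\zeta|\,\rho = p^{-1/(p-1)}\rho$, the hypothesis $R(E_L,\rho) > p^{-1/(p-1)}\rho$ guarantees that $\zeta t_0$ lies strictly inside the disc of convergence. The Taylor isomorphism attached to $\nabla$ then identifies the fibres at $\zeta t_0$ and at $t_0$, and letting $t_0$ vary produces the horizontal isomorphism $\psi_{\zeta}$. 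This is precisely where the radius hypothesis is used. Because parallel transport is transitive and $\nabla$ is integrable, the $\psi_g$ satisfy $\psi_{gh}=\psi_g\circ g^*\psi_h$, hence constitute genuine descent data; by faithfully flat (Galois) descent for coherent modules with integrable connection there is a $\nabla$-module $F_L$ on $A^1_L(I^p)$, unique as a descended object, with $\varphi_L^*F_L \cong E_L$.

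To compute $R(F_L,\rho^p)$ I would analyse $\varphi_L$ on a generic disc. Expanding $(t_0+h)^p = t_0^p + \sum_{k=1}^{p}\binom{p}{k}t_0^{p-k}h^k$ and using $|\binom{p}{k}| = p^{-1}$ for $1\le k\le p-1$, one finds that for $|h| = r > p^{-1/(p-1)}\rho$ the term $h^p$ dominates, so $\varphi_L$ carries the disc of radius $r$ about $t_0$ onto the disc of radius $r^p$ about $t_0^p$, the full preimage of the latter being the former (it already contains all $p$ conjugates $\zeta^{j}t_0$). Applying this with $r = R(E_L,\rho)$ and using $\varphi_L^*F_L = E_L$, the horizontal sections of $F_L$ at $t_0^p$ converge on the disc of radius $R(E_L,\rho)^p$, which gives $R(F_L,\rho^p) = R(E_L,\rho)^p$ and, in particular, $R(F_L,\rho^p) > (p^{-1/(p-1)}\rho)^p = p^{-p/(p-1)}\rho^p$, the stated bound. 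For uniqueness, given two candidates $F_L,F_L'$ I would consider $\Inthom(F_L,F_L')$, whose pullback is $\Inthom(E_L,E_L)$; the radius formula shows it again meets the threshold, so $\varphi_L^*$ is fully faithful on the subcategories cut out by the respective radius bounds, and the horizontal section $\id_{E_L}$ descends to an isomorphism $F_L \xrightarrow{\sim} F_L'$.

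The main obstacle is the existence step, and specifically the rigorous construction of the descent datum: one must verify that the radius hypothesis makes the parallel-transport identifications converge \emph{globally} on $A^1_L(I)$, not merely at a single generic point, and that they assemble into an honest morphism of $\nabla$-modules rather than a pointwise family. This is the technical heart of the Frobenius descent of Christol--Dwork as formalised by Kedlaya, and the decisive estimate is the interplay between the separation $p^{-1/(p-1)}\rho$ of the Galois conjugates and the generic radius of convergence $R(E_L,\rho)$.
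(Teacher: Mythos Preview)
Your approach is correct and is essentially the same as the paper's: the paper constructs $F_L$ by defining the $\mu_p$-action on $E_L$ via the global Taylor formula $\ul{\zeta}^m(x) := \sum_{i\ge 0}\frac{((\zeta^m-1)t)^i}{i!}\pa^i(x)$ (which converges precisely by the hypothesis $R(E_L,\rho)>p^{-1/(p-1)}\rho$), then takes the isotypic decomposition via the projectors $P_j(x)=p^{-1}\sum_i \zeta^{-ij}\ul{\zeta}^i(x)$ and sets $F_L:=\im P_0$; this is exactly your Galois descent made explicit. In particular, the explicit global Taylor formula is what resolves the obstacle you flag at the end --- it produces the horizontal isomorphism $\psi_\zeta$ uniformly over the whole annulus rather than pointwise.
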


\begin{cor}
Let $I \subseteq (0,+\infty)$ be a closed aligned interval and 
let $F_L$ be a $\nabla$-module on $A^1_L(I^p)$ with 
$R(F_L,\rho^p) > p^{-p/(p-1)}\rho^p$ for $\rho \in I$. Then 
$F_L$ is the Frobenius antecedent of $\varphi_L^*F_L$. 
\end{cor}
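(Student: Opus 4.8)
The plan is to deduce the corollary from the uniqueness assertion of Proposition-Definition~\ref{frobant}. Write $E_L := \varphi_L^* F_L$, which is a $\nabla$-module on $A^1_L(I)$. To apply \ref{frobant} to $E_L$ I must first check that $E_L$ fulfills its hypothesis, namely $R(E_L,\rho) > p^{-1/(p-1)}\rho$ for all $\rho \in I$. Once this is known, \ref{frobant} furnishes a unique $\nabla$-module $F'_L$ on $A^1_L(I^p)$ with $R(F'_L,\sigma) > p^{-p/(p-1)}\sigma$ for all $\sigma \in I^p$ and $\varphi_L^* F'_L = E_L$, namely the Frobenius antecedent of $E_L$; the remaining task is then to identify $F_L$ with $F'_L$.

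The key step is the radius estimate for $E_L$. I would show that the assumption $R(F_L,\rho^p) > p^{-p/(p-1)}\rho^p$ forces $R(E_L,\rho) > p^{-1/(p-1)}\rho$ for every $\rho \in I$; in fact the sharper identity $R(E_L,\rho) = R(F_L,\rho^p)^{1/p}$ holds, though only the lower bound is needed here. This rests on the behavior of the generic radius of convergence under the Frobenius pullback $t \mapsto t^p$: near a generic point of radius $\rho$ the map $\varphi_L$ scales the radius of a small disk by the expansion factor $|p|\rho^{p-1} = p^{-1}\rho^{p-1}$ as long as the disk radius stays below the threshold $p^{-1/(p-1)}\rho$, and by the $p$-th power above it. Feeding the convergence disk of a horizontal section of $F_L$ at $t^p$, of radius $R(F_L,\rho^p)$, back through $\varphi_L$ and comparing the two regimes gives $R(E_L,\rho) = R(F_L,\rho^p)^{1/p}$; since the threshold value $p^{-1}\rho^{p-1}\cdot p^{-1/(p-1)}\rho = p^{-p/(p-1)}\rho^p$ is exactly the quantity in the hypothesis, the strict inequality $R(F_L,\rho^p) > p^{-p/(p-1)}\rho^p$ translates into $R(E_L,\rho) > p^{-1/(p-1)}\rho$. (This transformation may also simply be cited from \cite{kedlayabook}.)

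With the hypothesis secured, the uniqueness argument finishes the proof. By construction $\varphi_L^* F_L = E_L$, and rewriting the assumption with $\sigma := \rho^p$ --- so that $\sigma$ ranges over all of $I^p$ as $\rho$ ranges over $I$ --- yields $R(F_L,\sigma) > p^{-p/(p-1)}\sigma$ for all $\sigma \in I^p$. Thus $F_L$ satisfies both defining conditions of the Frobenius antecedent of $E_L$, so the uniqueness in \ref{frobant} gives $F_L = F'_L$. Hence $F_L$ is the Frobenius antecedent of $\varphi_L^* F_L$, as claimed.

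I expect the radius-transformation estimate to be the only real obstacle. It is the inverse of the relation $R(F_L,\rho^p) = R(E_L,\rho)^p$ recorded in \ref{frobant}, but it cannot be extracted from \ref{frobant} directly, because that proposition takes as input a module already assumed to satisfy the Frobenius-radius hypothesis, whereas here that very property for $E_L = \varphi_L^* F_L$ is what must be proved. Establishing it independently --- through the generic-disk expansion analysis of $t \mapsto t^p$ sketched above, or by citation --- is precisely what unlocks the formal uniqueness step; everything else is bookkeeping.
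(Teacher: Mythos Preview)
Your proposal is correct and is precisely the natural argument; the paper states this corollary without proof, treating it as an immediate consequence of the uniqueness clause in Proposition-Definition~\ref{frobant} together with the Frobenius radius-transformation facts from \cite{kedlayabook}. You have correctly identified that the only nontrivial point is the independent verification of $R(\varphi_L^*F_L,\rho) > p^{-1/(p-1)}\rho$, and your sketch (or citation) for this is exactly what is needed.
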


\begin{rem} 
In \cite{kedlayabook}, the above proposition is proved also in the case 
where $L$ does not necessarily contain a primitive $p$-th root of 
unity, and also in the case where $I$ contains $0$. 
\end{rem}

We explain how to construct $F_L$, following \cite{kedlayabook}. 
Let $\pa$ be the composite 
$E_L \os{\nabla}{\lra} E_L dt \os{=}{\lra} E_L$ (where the second map 
sends $xdt$ to $x$ for $x \in E_L$). 
Let us define the action of 
the group $\mu_p = \{\ul{\zeta}^m\,\vert\,0 \leq m \leq p-1\}$ 
of the $p$-th roots of unity in $L$ on $E_L$ by 
$\ul{\zeta}^m(x) := 
\dsum_{i=0}^{\infty}\dfrac{((\zeta^m-1)t)^i}{i!} \pa^i(x)$
and let $P_j: E_L \lra E_L \,(0 \leq j \leq p-1)$ be the map defined 
by $P_j(x) := p^{-1} \dsum_{i=0}^{p-1} \zeta^{-ij}(\ul{\zeta}^i(x))$. 
Then, it is straightforward to see that $P_j$'s satisfy 
$P_j^2 = P_j, P_jP_{j'}=0 \,(j\not=j), \sum_{j=0}^{p-1}P_j\allowbreak=\id$. 
Hence we have the isomorphism $E \os{\cong}{\lra} 
\bigoplus_{j=0}^{p-1}\im P_j$. 
Also, it is easy to see that each $\im P_j$ has a natural structure 
of $\cO_{A_L^1(I^p)}$-module and that 
the map $x \mapsto t^jx$ induces the isomorphism 
$\im P_0 \os{\cong}{\lra} \im P_j$ of  $\cO_{A_L^1(I^p)}$-modules. 
Moreover,  $\cO_{A_L^1(I)}$ is a free $\cO_{A_L^1(I^p)}$-module with 
basis $1,t,...,t^{p-1}$. These facts imply that we have the canonical 
isomorphism 
$\oplus_{j=0}^{p-1}\im P_j \os{=}{\lla} \cO_{A_L^1(I)} 
\otimes_{\cO_{A_L^1(I^p)}} \im P_0$. So, if we put $F_L := \im P_0$, we 
have $\varphi^*_LF_L = E_L$. This $F_L$, endowed with the $\nabla$-module 
structure $x \mapsto \dfrac{1}{pt^{p-1}}\pa(x)d(t^p)$ (where we denote 
the coodinate of $A^1_K(I^p)$ by $t^p$), gives the desired 
$\nabla$-module on $A_L^1(I^p)$. (For the detail, see 
\cite[10.4.1]{kedlayabook}). \par 
We will slightly generalize the above construction. Let 
$\fX := \Spm A$ be a smooth affinoid rigid space over $K$ 
which admits an injection $A \hra L$ such that the norm on $L$ restricts to 
the supremum norm on $\fX$. 
For an aligned interval $I \subseteq [0,1)$, we can define the 
morphism $\varphi: \fX \times A_K^1(I) \lra \fX \times A_K^1(I^p)$ 
over $\fX$ by $t \mapsto t^p$. \par 
Let $I, E_L$ be as in 
Proposition-Definition \ref{frobant} and assume that $E_L$ is obtained from 
a $\nabla$-module $E$ on $\fX \times A^1_K(I)$ relative to $\fX$ 
via the `pull-back' by $A^1_L(I) \lra \fX \times A^1_K(I)$. Then we have the 
following: 

\begin{prop}\label{frobantrel}
With the situation above, assume moreover that $K$ contains $\zeta$. 
Then there exists a $\nabla$-module $F$ on 
$\fX \times A^1_K(I^p)$ relative to $\fX$ 
with $\varphi^*F \cong E$ which induces $F_L$ 
in Proposition-Definition \ref{frobant} via 
the `pull-back' by $A^1_L(I) \lra \fX \times A^1_K(I)$.
\end{prop}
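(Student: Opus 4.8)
The plan is to lift the construction of the Frobenius antecedent given above for $E_L$ to the relative setting over $\fX$, exploiting that the entire construction was carried out by purely algebraic operations (the $\mu_p$-action, the projectors $P_j$, and the decomposition into eigenspaces) that make sense verbatim over the base $\fX$. Concretely, I would first define the $\mu_p$-action on $E$ itself: for $\ul{\zeta}^m$ in $\mu_p \subseteq K$ (using the hypothesis $\zeta \in K$), set
\begin{equation*}
\ul{\zeta}^m(x) := \sum_{i=0}^{\infty}\frac{((\zeta^m-1)t)^i}{i!}\,\pa^i(x),
\end{equation*}
where now $\pa$ is the composite $E \os{\nabla}{\lra} E\,dt \lra E$ for the \emph{relative} connection $\nabla: E \lra E \otimes \Omega^1_{\fX \times A^1_K(I)/\fX}$. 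The first thing to check is that this series converges in $E$; this follows from the convergence estimate built into the hypothesis $R(E_L,\rho) > p^{-1/(p-1)}\rho$, which guarantees that $|\pa^i/i!|$ decays appropriately, together with the fact that $E$ restricts to $E_L$ so the same estimate controls the relative operator. Since the relative connection is $f^{-1}\cO_{\fX}$-linear, the coefficients $(\zeta^m-1)^i/i!$ are constants over $\fX$ and everything is compatible with the base.

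Next I would define the projectors $P_j: E \lra E$ by the same formula $P_j(x) := p^{-1}\sum_{i=0}^{p-1}\zeta^{-ij}\ul{\zeta}^i(x)$ and verify the orthogonality relations $P_j^2 = P_j$, $P_jP_{j'}=0\,(j\neq j')$, $\sum_j P_j = \id$. These are formal consequences of $\mu_p$ being a group and $\sum_i \zeta^{i(j-j')} = p\,\delta_{j,j'}$, so they hold over $\fX$ exactly as in the absolute case. This yields a decomposition $E \cong \bigoplus_{j=0}^{p-1}\im P_j$ of $\cO_{\fX \times A^1_K(I)}$-modules, and I would set $F := \im P_0$, equipped with the relative $\nabla$-module structure
\begin{equation*}
x \longmapsto \frac{1}{p\,t^{p-1}}\,\pa(x)\,d(t^p).
\end{equation*}
Here $t^{p-1}$ is invertible on the annulus $A^1_K(I)$ (since $I \subseteq (0,+\infty)$ is bounded away from $0$), so this is well-defined, and the key algebraic inputs—that multiplication by $t^j$ gives $\im P_0 \os{\cong}{\lra} \im P_j$, and that $\cO_{\fX \times A^1_K(I)}$ is free of rank $p$ over $\cO_{\fX \times A^1_K(I^p)}$ on $1,t,\dots,t^{p-1}$—both relativize immediately because they are statements about the coordinate $t$ alone and are unaffected by the base direction $\fX$. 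This gives $\varphi^*F \cong E$.

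The main obstacle I anticipate is not the construction itself but verifying the compatibility claim: that the relative $F$ just built actually induces, via pull-back along $A^1_L(I) \lra \fX \times A^1_K(I)$, the \emph{same} $\nabla$-module $F_L$ furnished by Proposition-Definition \ref{frobant}. Since $F_L$ is characterized \emph{uniquely} by the two conditions $R(F_L,\rho) > p^{-p/(p-1)}\rho$ and $\varphi_L^*F_L = E_L$, it suffices to show that the pull-back $F \otimes L$ of my $F$ satisfies both: the equation $\varphi_L^*(F\otimes L) = E_L$ is inherited from $\varphi^*F \cong E$ by applying the pull-back functor and using $E \otimes L = E_L$, while the radius estimate $R(F\otimes L,\rho^p) = R(E_L,\rho)^p > p^{-p/(p-1)}\rho^p$ follows from the absolute Proposition-Definition \ref{frobant} once one knows $F\otimes L$ is \emph{a} Frobenius antecedent of $E_L$. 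Because the eigenspace-projector construction over $L$ is literally the restriction of the construction over $\fX$—the $\mu_p$-action, the $P_j$, and the formula for $\pa$ all commute with the ring map $\cO_{\fX \times A^1_K(I)} \hra L(t)_\rho$—the two constructions agree on the nose, and uniqueness then forces $F \otimes L \cong F_L$, completing the proof.
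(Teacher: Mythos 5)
Your proposal is correct and follows essentially the same route as the paper: the paper's proof simply defines the projectors $P_j$ on $E$ by the same formulas as in the absolute case, establishes the isomorphisms $E \cong \bigoplus_j \im P_j \cong \cO_{\fX \times A_K^1(I)} \otimes_{\cO_{\fX \times A_K^1(I^p)}} \im P_0$ "in the same way," and sets $F := \im P_0$. The convergence check and the compatibility-by-uniqueness argument you spell out are exactly the details the paper leaves implicit, and your justifications of them are sound.
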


\begin{proof}
We can define the maps $P_j:E \lra E \, (0 \leq j \leq p-1)$ in the same 
way as in the case of $E_L$ and we can prove the isomorphisms 
$$ E \os{\cong}{\lra} \bigoplus_{j=0}^{p-1} \im P_j \os{\cong}{\lla} 
\cO_{\fX \times A_K^1(I)} 
\otimes_{\cO_{\fX \times A_K^1(I^p)}} \im P_0 $$
in the same way. Then it suffices to put $F := \im P_0$. 
\end{proof} 

\subsection{Exponent in the sense of Christol-Mebkhout}

In this subsection, we give a review of the exponent 
(in the sense of Christol-Mebkhout) for $\nabla$-modules on annuli 
satisfying the Robba condition, following \cite[10--12]{cmsurvey}. \par 
For $\alpha \in \Z_p$ and $h \in \N$, let $\alpha^{(h)}$ be the 
representative of $\alpha$ modulo $p^h$ in $[(1-p^h)/2, (1+p^h)/2)$. 
For $\mu \in \N$ and $\Delta = (\Delta_i) \in \Z_p^{\mu}$, 
let us put $\Delta^{(h)} := (\Delta_i^{(h)}) \in \Z^{\mu}$, 
$\sigma(\Delta) := (\Delta_{\sigma(i)}) \in \Z_p^{\mu} \, (\sigma \in 
{\frak{S}}_{\mu}).$ Let $|\cdot|_{\infty}$ be the usual absolute value 
on $\Z$ and for $\Delta = (\Delta_i) \in \Z^{\mu}$, 
put $|\Delta|_{\infty}:= \max |\Delta_i|_{\infty}$. \par 
With the above notation, we define the equivalence relation 
$\os{e}{\sim}$ on $\Z_p^{\mu}$ as follows: For $\Delta, \Delta' \in 
\Z_p^{\mu}$, $\Delta \os{e}{\sim} \Delta'$ if and only if 
there exists a sequence $(\sigma_h)_{h\in \N}$ of elements in 
${\frak{S}}_{\mu}$ such that 
$|{\Delta'}^{(h)}-\Delta^{(h)}|_{\infty}/h \, (h\in \N)$ is bounded. \par 
Let $K$ be as in Convention and 
let $L$ be a field of characteristic $0$ containing $K$ 
complete with respect to a norm 
(denoted also by $|\cdot|$) which extends the given absolute 
value of $K$. Let $L_{\infty}$ be the completion of $L(\mu_{p^{\infty}})$, 
where $\mu_{p^{\infty}}$ is the set of $p$-power roots of unity. 
Let $I \subseteq (0,+\infty)$ be a closed aligned interval and 
let $E$ be a $\nabla$-module on $A_L^1(I)$ of rank $\mu$ satisfying the 
Robba condition. 
($E$ is automatically a free module on $A_L^1(I)$ of rank $\mu$.) 
Let $\pa$ be the composite 
$E \os{\nabla}{\lra} E dt \os{=}{\lra} E$ (where the second map 
sends $xdt$ to $x$ for $x \in E$). Fixing a basis 
$\e := (\e_1, ..., \e_{\mu})$ of $E$, we define the 
resolvent $Y_{\e}(x,y)$ associated to $\e$ by 
$Y_{\e}(x,y) := \sum_{n=0}^{\infty} G_n(y)\dfrac{(x-y)^n}{n!}$, where 
$G_n \in \Mat_{\mu}(\cO_{A_L^1(I)})$ denotes the matrix expression 
of $\pa^n$ with respect to the basis $\e$. 
($Y_{\e}(x,y)$ is defined on $\{(x,y) \in \Af^{2,\an}_L \,\vert\, 
|y| \in I, |x-y| < |y|\}$.) With these notations, 
we define the exponent of $E$ as follows: 

\begin{thmdefn}[{\cite[11.3]{cmsurvey}}]\label{exponent}
Let $I, E, \mu$ be as above, take a basis $\e$ of $E$ and let 
$Y_{\e}(x,y)$ be the resolvent of $E$ associated to $\e$. 
Then the set of elements $\Delta$ in $\Z_p^{\mu}$ for which 
there exist a sequence $(S_h)_{h \in \N}$ in 
$\Mat_{\mu}(\cO_{A_{L_{\infty}}^1(I)})$ 
and constants $c_1,c_2 >0$ satisfying the 
conditions 
\begin{enumerate}
\item 
$\zeta^{\Delta} S_h(x) = S_h(\zeta x) Y_{\e}(\zeta x,x)$ for 
any $h \in \N$ and any $\zeta$ with $\zeta^{p^h}=1$, where 
$\zeta^{\Delta}$ denotes the diagonal $\mu \times \mu$ matrix with 
diagonal entries $\zeta^{\Delta_i} \,(1 \leq i \leq \mu)$. 
\item 
$|S_h|_{\rho} \leq c_1^h$ for any $h \in \N$ and $\rho \in I$, where 
$|\cdot|_{\rho}$ denotes the $\rho$-Gauss norm. 
\item 
There exists some $\rho_0 \in I$ such that 
$|\det (S_h)|_{\rho_0} \geq c_2$ for any $h \in \N$. 
\end{enumerate}
is non-empty, independent of the choice of $\e$ and contained in 
one equivalence class for the relation $\os{e}{\sim}$. 
We call this class the exponent of $E$ and denote it by $\Exp(E)$. 
\end{thmdefn}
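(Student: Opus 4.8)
The plan is to read the three assertions---non-emptiness, independence of the basis $\e$, and containment in a single $\os{e}{\sim}$-class---as statements about the asymptotic ``monodromy'' of $E$ encoded in the resolvent. Recall that $Y_{\e}(x,y)$ is the fundamental solution of $\pa Y = G_1 Y$ normalized by $Y_{\e}(y,y)=\id$, and that the Robba condition ($R(E,\rho)=\rho$, so that $Y_{\e}$ converges on $|x-y|<|y|$) is exactly what makes the finite-order operators $U_{\zeta}(x):=Y_{\e}(\zeta x,x)$, for $\zeta\in\mu_{p^h}$, well defined and \emph{uniformly bounded together with their inverses}. The cocycle identity $Y_{\e}(x,z)=Y_{\e}(x,y)Y_{\e}(y,z)$ gives $U_{\zeta\zeta'}(x)=U_{\zeta}(\zeta' x)U_{\zeta'}(x)$, so $\zeta\mapsto U_{\zeta}$ is a $1$-cocycle for the rotation action of $\mu_{p^h}$. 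In this language condition (1) says precisely that $S_h$ trivializes this cocycle to the \emph{constant diagonal} cocycle $\zeta\mapsto\zeta^{\Delta}$, while (2) and (3) require $S_h$ to be uniformly bounded above and uniformly invertible as $h$ grows. Thus the statement asserts existence and essential uniqueness of an asymptotic simultaneous diagonalization of the finite monodromy, with $\Delta$ recording the $p$-adic limit of its eigenvalue data.

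Independence of $\e$ I would dispatch first, as it is formal. For a change of basis $\e'=\e H$ with $H\in\mathrm{GL}_{\mu}(\cO_{A^1_L(I)})$, the resolvent transforms by $Y_{\e'}(x,y)=H(x)^{-1}Y_{\e}(x,y)H(y)$, and a direct substitution shows that $(S_h)$ satisfies (1) for $\e$ if and only if $(S_hH)$ satisfies (1) for $\e'$, with the \emph{same} $\Delta$. Since $H$ and $H^{-1}$ have entries in $\cO_{A^1_L(I)}$, hence bounded Gauss norm on $I$, the bounds (2) and (3) are preserved (the determinant lower bound at $\rho_0$ changes only by the bounded factor $|\det H|_{\rho_0}^{\pm 1}$). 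Hence the admissible set of $\Delta$ does not depend on $\e$.

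For the single-class assertion, suppose $\Delta$ and $\Delta'$ are both admissible, witnessed by $(S_h)$ and $(S'_h)$. On the locus where $S_h$ is invertible set $T_h:=S'_h S_h^{-1}$; eliminating $Y_{\e}(\zeta x,x)$ between the two instances of (1) yields $\zeta^{\Delta'}T_h(x)=T_h(\zeta x)\zeta^{\Delta}$, i.e. the entrywise relation $\zeta^{\Delta'_i-\Delta_j}(T_h)_{ij}(x)=(T_h)_{ij}(\zeta x)$ for every $\zeta\in\mu_{p^h}$. Expanding $(T_h)_{ij}=\sum_n a_n x^n$ forces $a_n=0$ unless $n\equiv\Delta'_i-\Delta_j\ (\mathrm{mod}\ p^h)$, so each entry is supported on a single residue class of monomials modulo $p^h$. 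The uniform two-sided bounds on $S_h$ and $S'_h$ make $T_h$ and $T_h^{-1}$ bounded, and expanding $\det T_h$ selects, for each $h$, a permutation $\sigma_h\in\frak{S}_{\mu}$ along which the entries of $T_h$ are bounded below. Since a monomial $x^n$ has Gauss norm $\rho^n$ and $I$ contains two distinct radii, uniform boundedness over $I$ forces the dominant exponent $n$ to grow at most linearly in $h$; matching these exponents with the differences $(\Delta')^{(h)}-\sigma_h(\Delta)^{(h)}$ gives exactly the bounded-difference condition defining $\os{e}{\sim}$, i.e. $\Delta\os{e}{\sim}\Delta'$. (The reconciliation of componentwise representatives is precisely the bookkeeping the relation $\os{e}{\sim}$ is designed to absorb.)

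The genuinely hard part, and the one I expect to be the main obstacle, is non-emptiness: producing even one admissible $\Delta$. The idea is to diagonalize the finite monodromy simultaneously for each fixed $h$. Since $\mu_{p^h}$ is abelian and, through the cocycle, acts by operators whose eigenvalues are $p^h$-th roots of unity $\zeta^{a_i^{(h)}}$, one can over $A^1_{L_{\infty}}(I)$---which is why one passes to $L_{\infty}$, so that all $p$-power roots of unity are available---produce matrices $S_h$ conjugating $U_{\zeta}$ to $\zeta^{\Delta^{(h)}}$ with $\Delta^{(h)}=(a_i^{(h)})$. The delicate point is \emph{uniformity}: one must choose the $\Delta^{(h)}$ coherently so that they converge $p$-adically to some $\Delta\in\Z_p^{\mu}$, and control the diagonalizing matrices so that $|S_h|_{\rho}\le c_1^h$ and $|\det S_h|_{\rho_0}\ge c_2$. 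Both estimates are where the Robba condition enters quantitatively, and obtaining them is the deep content of the Christol--Mebkhout theory of $p$-adic exponents; I would follow their analysis of the resolvent and of the decomposition of solvable $\nabla$-modules under the $\mu_{p^{\infty}}$-action to secure these bounds.
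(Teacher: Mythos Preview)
Your treatment of basis-independence and of the single-class assertion is correct and is essentially what one finds in the literature: the change-of-basis formula $Y_{\e'}(x,y)=H(x)^{-1}Y_{\e}(x,y)H(y)$ and the intertwiner $T_h=S'_hS_h^{-1}$ are exactly the right objects, and the monomial-support analysis you describe is the standard route to $\Delta\os{e}{\sim}\Delta'$. (One small point: your lower bound on $\det S_h$ is only assumed at a single $\rho_0$, so the two-radius trick must be set up with some care; in practice one combines the upper bound (2) on all of $I$ with the lower bound at $\rho_0$, and the convexity of Gauss norms, rather than using two radii where both $S_h$ and $S'_h$ are invertible.)

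The gap is in non-emptiness. Your description---``diagonalize the finite monodromy for each $h$ and then make the $\Delta^{(h)}$ converge''---is the right intuition but is not a construction, and the coherence and uniform-bound problems you flag are exactly what a bare diagonalization does not solve. The paper, following Christol--Mebkhout and Dwork, bypasses diagonalization entirely and writes down $S_h$ explicitly as an isotypic projector:
\[
S_{h,\Delta_h}(x)\;:=\;p^{-h}\sum_{\zeta^{p^h}=1}\zeta^{-\Delta_h}\,Y_{\e}(\zeta x,x),
\qquad \Delta_h\in(\Z/p^h\Z)^{\mu}.
\]
Condition (1) is then a one-line verification from the cocycle identity, and condition (2) follows immediately from the uniform bound on $Y_{\e}(\zeta x,x)$ supplied by the Robba condition. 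The selection of $\Delta$ is then purely ultrametric: from
\[
\det(S_{h,\Delta_h})\;=\;\sum_{\substack{\Delta'\in(\Z/p^{h+1}\Z)^{\mu}\\ \Delta'\equiv\Delta_h\ \mathrm{mod}\ p^h}}\det(S_{h+1,\Delta'})
\]
and the non-Archimedean maximum principle at a fixed $\rho_0$, one can choose $\Delta_{h+1}$ lifting $\Delta_h$ with $|\det S_{h+1,\Delta_{h+1}}|_{\rho_0}\ge|\det S_{h,\Delta_h}|_{\rho_0}$; the resulting inverse system gives $\Delta\in\Z_p^{\mu}$ and $S_h:=S_{h,\Delta_h}$ satisfying (3). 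This averaging-plus-pigeonhole construction is the missing idea in your sketch; without it, there is no mechanism producing $S_h$ with the required uniform determinant lower bound.
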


We recall the construction of $\Delta$ and $(S_h)_{h \in \N}$, 
following \cite[11.2.1]{cmsurvey}. (See also \cite{dwork}.) 
For $h \in \N$ and $\Delta_h \in (\Z/p^h\Z)^{\mu}$, 
let us define $S_{h,\Delta_h}(x)$ by 
$S_{h,\Delta_h} := 
p^{-h} \dsum_{\zeta^{p^h}=1} \zeta^{-\Delta_h}Y_{\e}(\zeta x,x)$. 
Then we have 
$$ \det (S_{h,\Delta_h}) = \dsum_{\Delta' \in (\Z/p^{h+1}\Z)^{\mu}, 
\equiv \Delta_{h} \,{\rm mod}\, p^h} \det (S_{h+1,\Delta'}). $$
Fix any $\rho_0 \in I$. Then, from the above equation, we can choose 
$\Delta \in \Z_p^{\mu}$ such that, if we put $\Delta_h := \Delta \,{\rm mod}\, 
p^h$, we have the inequalities $|\det (S_{h,\Delta_{h}})|_{\rho_0} 
\leq |\det (S_{h+1,\Delta_{h+1}})|_{\rho_0}$ 
for any $h \in \N$. Then, if we put 
$S_h := S_{h,\Delta_h}$, 
this $\Delta$ and $(S_h)_{h \in \N}$ satisfy the conditions (1), (2), (3) 
in Theorem-Definition \ref{exponent}. (For detail, see 
\cite{cmsurvey} and the references which are quoted there.) \par 
Let $\lam \in (0,1)\cap\Gamma^*$ and let $E$ be a $\nabla$-module on 
$A^1_L[\lam,1)$ with highest ramification break $0$. Then there exists some 
$\lam' \in [\lam,1)\cap\Gamma^*$ such that, for any 
closed aligned subinterval $I$ in $[\lam',1)$, 
$E$ satisfies the Robba condition on $A_L^1(I)$ (see Corollary 
\ref{elementary}). Hence we can define the exponent $\Exp(E)$ of $E$ and 
it is known that this defininition of $\Exp(E)$ 
is independent of the choice of $\lam'$ and $I$. \par 
As for the relation of the equivalence $\os{e}{\sim}$ with 
$p$-adic non-Liouvilleness, we have the following: 

\begin{prop}[{\cite[10.5]{cmsurvey}}]\label{nld}
Let $\Delta = (\Delta_i), \Delta'=(\Delta'_i) \in \Z_p^{\mu}$ 
and assume that $\Delta_i-\Delta_j$ are $p$-adically non-Liouville for 
any $i,j$. Then we have $\Delta \os{e}{\sim} \Delta'$ if and only if 
there exists an element $\sigma \in {\frak{S}}_{\mu}$ with 
$\Delta' - \sigma(\Delta) \in \Z^{\mu}$. 
\end{prop}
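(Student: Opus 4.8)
The plan is to prove Proposition \ref{nld} by reducing the equivalence relation $\os{e}{\sim}$ to its definition and exploiting the non-Liouville hypothesis to control how the representatives $\Delta^{(h)}$ and $\Delta'^{(h)}$ can differ as $h \to \infty$. First I would recall the definition: $\Delta \os{e}{\sim} \Delta'$ means there is a sequence $(\sigma_h)_{h \in \N}$ in ${\frak{S}}_{\mu}$ with $|{\Delta'}^{(h)} - \sigma_h(\Delta)^{(h)}|_{\infty}/h$ bounded in $h$. (Here I must be careful: the equivalence in the excerpt is stated for $\Delta,\Delta'$ directly, so the permutation acts on $\Delta$ and I compare the integer representatives modulo $p^h$.) The ``if'' direction is the easy one: if $\Delta' - \sigma(\Delta) \in \Z^{\mu}$ for a single fixed $\sigma$, then taking $\sigma_h = \sigma$ for all $h$, the quantity $\Delta'^{(h)} - \sigma(\Delta)^{(h)}$ is eventually a fixed integer vector (its components stabilize once $p^h$ exceeds the relevant integers), hence $|{\Delta'}^{(h)} - \sigma(\Delta)^{(h)}|_{\infty}/h \to 0$ and in particular is bounded.

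**The forward direction.**
The substantive content is the ``only if'' direction. Assume $\Delta \os{e}{\sim} \Delta'$, so there is a sequence $(\sigma_h)$ with $|{\Delta'}^{(h)} - \sigma_h(\Delta)^{(h)}|_{\infty}/h$ bounded, say by a constant $M$. Since ${\frak{S}}_{\mu}$ is finite, some permutation $\sigma$ occurs infinitely often in the sequence $(\sigma_h)$; restrict to the subsequence of indices $h$ where $\sigma_h = \sigma$. I would then argue componentwise: for each pair of indices $(i,j)$ with $\sigma(j)=i$, the bound gives $|{\Delta'_i}^{(h)} - \Delta_j^{(h)}|_{\infty} \leq Mh$ along the subsequence. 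The goal is to upgrade this sublinear-in-$h$ bound on integer representatives to the genuine statement that $\Delta'_i - \Delta_j \in \Z$.

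**Invoking non-Liouvilleness — the main obstacle.**
This is where the non-Liouville hypothesis on the differences $\Delta_i - \Delta_j$ (and hence on $\Delta'_i - \Delta_j$, once one knows they lie in the same residue pattern) must enter, and it is the crux of the argument. The key point is the characterization of $p$-adic Liouville numbers in terms of the growth of the distance from $p^h$-truncations: a $p$-adic integer $\beta$ is non-Liouville precisely when the quantities controlling $|\beta^{(h)}|_{\infty}$ do not grow faster than any geometric rate, and for a \emph{non-integer} non-Liouville $\beta$ one shows $\liminf_h |\beta^{(h)}|_{\infty}/h > 0$ — equivalently, $|\beta^{(h)}|_{\infty}$ must grow at least linearly. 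Applying this to $\beta = \Delta'_i - \Delta_j$: I would show that if $\Delta'_i - \Delta_j \notin \Z$ but is non-Liouville, then $|(\Delta'_i - \Delta_j)^{(h)}|_{\infty}/h$ is bounded \emph{below} away from zero, while the bound $|{\Delta'_i}^{(h)} - \Delta_j^{(h)}|_{\infty} \leq Mh$ only bounds it \emph{above}; these are compatible, so the contradiction must be sharper. The correct route, which I expect to be the hard part, is to use that a non-integer non-Liouville number has $p$-adic expansion whose representatives $\beta^{(h)}$ satisfy $|\beta^{(h)} - \beta^{(h+1)}|$ eventually dominated geometrically, forcing the \emph{integer} sequence $\beta^{(h)}$ to be eventually constant if $|\beta^{(h)}|_{\infty}/h \to 0$ is to hold — and eventual constancy of $(\Delta'_i - \Delta_j)^{(h)}$ in $\Z$ is exactly the statement $\Delta'_i - \Delta_j \in \Z$. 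I would therefore invoke the standard non-Liouville estimate (as in Christol--Mebkhout, \cite[10.5]{cmsurvey} and the references therein) that a non-Liouville $p$-adic number lies in $\Z$ if and only if $|\beta^{(h)}|_\infty / h \to 0$, and conclude that each $\Delta'_i - \Delta_{\sigma^{-1}(i)} \in \Z$, i.e. $\Delta' - \sigma(\Delta) \in \Z^{\mu}$, completing the proof.
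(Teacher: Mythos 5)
The paper itself offers no proof of this proposition: it is quoted from Christol--Mebkhout \cite[10.5]{cmsurvey}. So I compare your proposal against the standard argument (Christol--Mebkhout; see also Kedlaya's book, \S 13.5), and there your proof has a genuine structural gap in the hard direction. Your plan is: pigeonhole a permutation $\sigma$ occurring infinitely often among the $\sigma_h$, restrict to that subsequence $S$, and then apply a non-Liouville estimate to $\beta = \Delta'_i - \Delta_{\sigma(i)}$ to force $\beta \in \Z$. But the hypothesis of the proposition concerns \emph{only} the internal differences $\Delta_i - \Delta_j$; nothing at all is assumed about $\Delta'$, so $\beta$ has no reason to be non-Liouville. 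Your parenthetical fix (``and hence on $\Delta'_i - \Delta_j$, once one knows they lie in the same residue pattern'') is circular: $\Delta'_i - \Delta_j$ is non-Liouville exactly when it differs by an integer from some $\Delta_{\sigma(i)} - \Delta_j$, which is the conclusion being proved. And the gap cannot be waved away, because a bound $|\beta^{(h)}|_{\infty} \le Mh$ \emph{along a sparse subsequence only} really does not imply $\beta \in \Z$ without non-Liouvilleness: for $\beta = \sum_k p^{a_k}$ with $a_{k+1} \ge p^{a_k}$ one has $|\beta^{(h)}|_{\infty} \le 2h$ at $h = a_{k+1}$ for every $k$, yet $\beta \notin \Z$ (this $\beta$ is Liouville). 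The correct proof uses the fact that $\sigma_h$ exists at \emph{every} level $h$, and splits the work: (i) an unconditional scalar lemma --- if $\gamma \in \Z_p$ is congruent mod $p^h$ to an integer of size $O(h)$ for \emph{all} large $h$, then comparing consecutive levels forces these integers to stabilize, so $\gamma \in \Z$, with no Liouville hypothesis; (ii) the non-Liouville hypothesis enters only at levels where the permutation switches: if $(\sigma_h,\sigma_{h+1}) = (\sigma,\sigma')$ infinitely often, then the two congruences at levels $h,h+1$ show that the \emph{internal} difference $\Delta_{\sigma(i)} - \Delta_{\sigma'(i)}$ (which the hypothesis does cover) is congruent mod $p^h$ to integers of size $O(h)$ infinitely often, hence lies in $\Z$; this lets one replace all $\sigma_h$ (for $h$ large) by a single $\sigma$ at the cost of bounded integer shifts, and then (i) finishes. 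Your proposal is missing mechanism (ii) entirely, and it is the whole point of the non-Liouville hypothesis.

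Two further problems, both traceable to the definition of $\os{e}{\sim}$. First, your ``if'' direction is false as written: the difference of centered representatives need \emph{not} stabilize to the fixed integer vector. For $p$ odd, $\Delta = (-1/2)$, $\Delta' = (1/2)$ one has $(-1/2)^{(h)} = (p^h-1)/2$ and $(1/2)^{(h)} = (1-p^h)/2$, so ${\Delta'}^{(h)} - \Delta^{(h)} = 1 - p^h$ for every $h$, which is not $O(h)$ even though $\Delta' - \Delta = 1 \in \Z$. The resolution is that the paper's transcription of the Christol--Mebkhout relation is too naive: the actual condition is that $\Delta' - \sigma_h(\Delta)$ be congruent modulo $p^h$ to an integer vector of size $O(h)$ (equivalently, one measures the minimal residue, not the difference of centered representatives); under that reading the ``if'' direction is immediate, with no stabilization claim needed, and the wrap-around phenomenon above disappears. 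Second, the scalar fact you invoke is stated in a form (``$\beta \in \Z$ iff $|\beta^{(h)}|_{\infty}/h \to 0$'') that does not match what your argument would need (a statement about bounds holding only along a subsequence, which is precisely where non-Liouvilleness is indispensable), and you cite it to ``\cite[10.5]{cmsurvey} and the references therein'' --- i.e., essentially to the proposition being proved. So both directions, as proposed, have steps that fail or rest on unproved/circular invocations.
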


\subsection{Overconvergent isocrystals and log-extendability}

In this subsection, we recall the notion of 
overconvergent isocrystals having $\Sigma$-unipotent monodromy and 
isocrystals on log convergent site with exponents in $\Sigma$ for 
$\Sigma \subseteq \Z_p^r$. Also, we recall several results 
proved in \cite{sigma}. 
In this subsection, we fix an open immersion $X \hra \ol{X}$ of 
smooth $k$-varieties with $Z := \ol{X} - X = \bigcup_{i=1}^rZ_i$ a simple 
normal crossing divisor and let us denote the log structure on $\ol{X}$ 
associated to $Z$ by $M_{\ol{X}}$. First we recall the notion concerning 
frames. 

\begin{defn}[{\cite[2.2.4, 4.2.1]{kedlayaI}}] 
\begin{enumerate}
\item 
A frame $($or affine frame$)$ is a tuple $(U,\ol{U},\allowbreak \cP,i,j)$, 
where $U,\ol{U}$ are $k$-varieties, $\cP$ is a $p$-adic affine formal scheme 
topologically of finite type over $O_K$, $i:\ol{U} \hra \cP$ is a 
closed immersion over $O_K$, $j: U \hra \ol{U}$ is an open immersion 
over $k$ such that $\cP$ is formally smooth over $O_K$ on a neighborhood 
of $X$. We say that the frame encloses the pair $(U,\ol{U})$. 
\item 
A small frame is a frame $(U,\ol{U},\cP,i,j)$ such that 
$\ol{U}$ is isomorphic to $\cP \times_{\Spf O_K} \Spec k$ 
via $i$ and that there exists 
an element $f \in \Gamma(\ol{U},\cO_{\ol{U}})$ with $U=\{f\not=0\}$. 
\end{enumerate}
\end{defn} 

\begin{defn}[{\cite[3.3]{sigma}}] 
Let $U \hra \ol{U}$ be an open immersion of smooth $k$-varieties such 
that $Y := \ol{U}-U$ is a simple normal crossing 
divisor and let $Y = \bigcup_{i=1}^r Y_i$ be a decomposition of 
$Y$ such that $Y=\bigcup_{\scriptstyle 1 \leq i \leq r \atop 
\scriptstyle Y_i \not= \emptyset} Y_i$ gives the decomposition of 
$Y$ into irreducible components. 
A standard small frame enclosing $(U,\ol{U})$ 
is a small frame $\bP := (U,\ol{U},\cP,i,j)$ enclosing $(U,\ol{U})$ 
which satisfies the following condition$:$ 
There exist $t_1, ..., t_r \in \Gamma(\cP,\cO_{\cP})$ such that, 
if we denote the zero locus of $t_i$ in $\cP$ by $\cQ_i$, 
each $\cQ_i$ is irreducible $($possibly empty$)$ and that 
$\cQ = \bigcup_{i=1}^r\cQ_i$ is a relative simple normal crossing 
divisor of $\cP$ 
satisfying $Y_i = \cQ_i \times_{\cP} \ol{U}$. 
We call a pair $(\bP, (t_1,...,t_r))$ a charted standard small frame. 
When $r=1$, we call $\bP$ a smooth standard small frame and 
the pair $(\bP,t_1)$ a charted smooth standard small frame. 
\end{defn} 

Let $\ol{U}$ be an open subscheme of $\ol{X}$, let $U := X \cap \ol{U}$ and 
let $((U,\ol{U},\cP,i,j),t)$ be a charted smooth standard small frame. Then 
an overconvergent isocrystal $\cE$ on $(X,\ol{X})/K$ induces a 
$\nabla$-module on $\fU_{\lam} := 
\{x \in \cP_K \,\vert\, |t(x)| \geq \lam\}$ for some 
$\lam \in (0,1)\cap \Gamma^*$, which we denote by $E_{\cE} = 
(E_{\cE},\nabla)$. 
Since $\fU_{\lam}$ contains a relative 
annulus $\cQ_K \times A^1_K[\lam,1)$, we can restrict $E_{\cE}$ to this space. 
Furthurmore, if $\cQ_K$ is non-empty and if 
we are given an injection $\Gamma(\cQ_K,\cO_{\cQ_K}) \hra L$ 
into a field $L$ endowed with a complete norm which restricts to the supremum 
norm on $\cQ_K$ (e.g., when $L$ is the completion of the fraction field of 
$\Gamma(\cQ_K,\cO_{\cQ_K})$ with respect to the supremum norm), 
we can further restrict $E_{\cE}$ to $A_L^1[\lam,1)$, 
which we denote by $E_{\cE,L}$. Then we have the following proposition:

\begin{prop}\label{ocsolv}
With the above notation, the $\nabla$-module $E_{\cE,L}$ on 
$A^1_L[\lam,1)$ is solvable. 
\end{prop}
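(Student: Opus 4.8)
The plan is to read off the solvability of $E_{\cE,L}$ directly from the overconvergence of $\cE$, using the formula $R(E_{\cE,L},\rho) = \min(\rho, \varliminf_{n\to\infty}|G_n/n!|_{\rho}^{-1/n})$ for the generic radius of convergence, where $G_n$ is the matrix of $\pa^n$ with respect to a chosen basis $\e$ of $E_{\cE,L}$. Since $R(E_{\cE,L},\rho) \leq \rho$ always holds, solvability amounts to showing that $\varliminf_{n\to\infty}|G_n/n!|_{\rho}^{-1/n}$ gets arbitrarily close to $\rho$ (equivalently to $1$) as $\rho \to 1^-$, i.e. that the radius of convergence of the horizontal Taylor expansion approaches its maximal value as we approach the tube of $X$.

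First I would recall that, because $\cE$ is an overconvergent isocrystal, its realization on the charted smooth standard small frame $\bP$ is a module with integrable connection on the strict neighborhood $\fU_{\lam}$ of the tube of $X$, whose associated HPD-stratification (Taylor isomorphism) converges on a strict neighborhood of the diagonal. Restricting this stratification to the relative annulus $\cQ_K \times A^1_K[\lam,1)$ and then to $A^1_L[\lam,1)$, its expression in the coordinate $t$ is exactly the resolvent $Y_{\e}(x,y) = \sum_{n} G_n(y)(x-y)^n/n!$ associated to $\e$. Hence the radius of convergence of $Y_{\e}(\cdot,y)$ at a generic point $y$ of radius $\rho$ coincides with $\varliminf_{n\to\infty}|G_n/n!|_{\rho}^{-1/n}$, and the content of the proof is that overconvergence forces this radius to approach $1$ as $\rho \to 1^-$.

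The main step is then the quantitative translation. Since the part of the annulus with $|t|$ close to $1$ corresponds to moving off the divisor $\cQ$ into the tube of $X$, where $\cE$ is (over)convergent, the strict neighborhood of the diagonal on which the stratification converges has width approaching the full radius there. Concretely, I would produce a function $\eta(\rho)$ with $\eta(\rho) \to 1$ as $\rho \to 1^-$ such that $Y_{\e}(\cdot,y)$ converges on the disk $|x-y| < \eta(\rho)$ whenever $|y|=\rho$, giving $\varliminf_{n\to\infty}|G_n/n!|_{\rho}^{-1/n} \geq \eta(\rho)$ and hence $R(E_{\cE,L},\rho) \geq \min(\rho,\eta(\rho))$. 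Letting $\rho \to 1^-$ then yields $\dlim_{\rho\to1^-}R(E_{\cE,L},\rho)=1$, which is the desired solvability.

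The hard part will be making this geometric input precise: one must translate the abstract overconvergence condition on $\cE$ (convergence of the stratification on a strict neighborhood of the diagonal in $\cP_K$) into the explicit lower bound $\eta(\rho)$ on the radius of convergence in the coordinate $t$, and verify that the bound survives the two restrictions involved — first to the relative annulus $\cQ_K \times A^1_K[\lam,1)$, and then the base change to the field $L$ attached to the generic point of $\cQ_K$ (equivalently, of $Z_i$). In particular one should check that passing to the supremum-norm completion $L$ neither shrinks the radius of convergence nor disturbs the estimate, so that solvability genuinely descends to the one-dimensional annulus $A^1_L[\lam,1)$.
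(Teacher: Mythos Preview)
Your proposal is correct and follows essentially the same route as the paper: both arguments reduce solvability to showing that, for every $\eta<1$, one has $|G_n/n!|_{\rho}\,\eta^n\to 0$ (equivalently, the resolvent converges on a disk of radius $\eta$) once $\rho$ is sufficiently close to $1$, and both trace this back to the overconvergence of the Taylor isomorphism attached to $\cE$. The paper dispatches what you call ``the hard part'' by invoking \cite[2.2.13]{berthelotrig} and \cite[2.5.6]{kedlayaI}, which give precisely the estimate $\lim_{i\to\infty}\|\pa^i(e)/i!\|\,\eta^i=0$ on $\fU_{\lambda}$ for $\lambda$ close enough to $1$ (after shrinking $\cP$ so that $\Omega^1_{\cP_K}$ is freely generated by $dx_1,\dots,dx_n,dt$ and $\pa$ is the $t$-component of $\nabla$); the passage to $L$ then follows because the supremum norm on $\cZ_K$ dominates the $\rho$-Gauss norm used to compute $R(E_{\cE,L},\rho)$, exactly as you anticipated.
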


\begin{proof}
Since we may shrink $\cP$ (since, for dense open immersion 
$\cQ' \hra \cQ$, induced map $\Gamma(\cQ_K,\cO_{\cQ_K}) \hra 
\Gamma(\cQ'_K,\cO_{\cQ'_K})$ respects the supremum norm), 
we may assume that $\Omega^1_{\cP_K}$ 
is freely generated by $dx_1,...,dx_n,dt$ for some 
$x_1,...,x_n \in \Gamma(\cP_K,\cO_{\cP_K})$. 
Let $\pa:E_{\cE} \lra E_{\cE}$ be the composite 
$$ E_{\cE} \os{\nabla}{\lra} E_{\cE} \otimes \Omega^1_{\fU_{\lam}} 
\os{\pi}{\lra} E_{\cE} dt \os{=}{\lra} E, $$
where $\pi$ sends $dx_i$ (resp. $dt$) to $0$ (resp. $dt$) and the 
last map sends $x dt\,(x \in E_{\cE})$ to $x$. 
Then, by \cite[2.2.13]{berthelotrig} and 
\cite[2.5.6]{kedlayaI}, we have the following: 
For any $\eta <1$, there exists $\lam_0 <1$ such that for any 
$\lam \in (\lam_0,1)\cap \Gamma^*$ and for any $e \in 
\Gamma(\fU_{\lam}, E_{\cE})$, we have 
$\lim_{i\to\infty} \|\pa^i(e)/i!\|\eta^i =0$, where $\|\cdot\|$ denotes 
any Banach norm on $\Gamma(\fU_{\lam}, E_{\cE})$. 
Hence, for any $\eta <1$, there exists $\rho_0 <1$ such that for any 
$\rho \in (\rho_0,1)\cap \Gamma^*$ and for any 
$e \in E_{\cE,L,\rho}$, we have 
$\lim_{i\to\infty} |\pa^i(e)/i!|_{\rho}\eta^i =0$, where 
$|\cdot|_{\rho}$ denotes any norm on $E_{\cE,L,\rho}$ 
induced by the norm on $L(t)_{\rho}$(= the completion of $L(t)$ with 
repsect to the $\rho$-Gauss norm). This implies the inequality 
$p^{1/(p-1)}|\pa|_{\rho,\sp}\eta \leq 1$. Hence, for any $\eta <1$, 
we have $\eta \leq R(E_{\cE,L},\rho)$ for $\rho$ sufficiently close to $1$, 
that is, $E_{\cE,L}$ is solvable. 
\end{proof} 

Next we recall the notion of $\Sigma$-unipotent $\nabla$-modules. 
For aligned interval $I \subseteq (0,+\infty)$ and $\xi \in \Z_p$, 
we define the $\nabla$-module $M_{\xi} := (M_{\xi},\nabla_{M_{\xi}})$ on 
$A^1_K(I)$ (whose coordinate is $t$) as the $\nabla$-module 
$(\cO_{A^1_K(I)}, d + \xi\dlog t)$. 

\begin{defn}[{\cite[1.3]{sigma}}]\label{unipdef}
Let $\fX$ be a smooth rigid space. Let $I \subseteq (0,\infty)$ 
be an aligned interval and fix 
$\Sigma \subseteq \Z_p$. A $\nabla$-module $E$ on 
$\fX \times A_K^1(I)$ is called 
$\Sigma$-unipotent if it admits a filtration 
$$ 0 = E_0 \subset E_1 \subset \cdots \subset E_m=E $$ 
by sub log-$\nabla$-modules whose successive quotients 
have the form 
$\pi_1^*F \otimes \pi_2^*M_{\xi}$ for some $\nabla$-module 
$F$ on $\fX$ and $\xi \in \Sigma$, where 
$\pi_1: \fX \times A_K^1(I) \lra \fX$, $\pi_2:\fX \times A_K^1(I) 
\lra A_K^1(I)$ 
denote the projections. 
\end{defn}

We call a subset 
$\Sigma := \prod_{i=1}^r\Sigma_i$ in $\Z_p^r$ 
$\NID$ $($resp. $\NLD)$ if for any $1 \leq i \leq r$ and 
any $\alpha, \beta \in \Sigma_i$, $\alpha - \beta$ is not 
a non-zero integer $($resp. $p$-adically non-Liouville$)$. 
Under the assumption of $\NID$ and $\NLD$, we have the following 
generization property for $\Sigma$-unipotent $\nabla$-modules: 

\begin{prop}[{\cite[2.4]{sigma}}]\label{generization}
Let $\fX$ be a smooth affinoid rigid space and let 
$\Gamma(\fX,\cO_{\fX}) \hra L$ 
be an injection into a field complete with respect to 
a norm which restricts to 
the supremum norm on $\fX$. Let $I \subseteq (0,1)$ be an open interval, 
let $\Sigma$ be a subset of $\Z_p$ which is $\NID$, $\NLD$ and  
let $E$ be a $\nabla$-module on $X \times A_K^1(I)$ whose restriction 
to $A_L^1(I)$ is $\Sigma$-unipotent. Then $E$ is also $\Sigma$-unipotent. 
\end{prop}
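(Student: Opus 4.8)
The plan is to study the ``Euler-type'' operator $\theta := t\,\nabla_{\pa/\pa t}$ on $E$ (the $t$-direction part of $\nabla$, multiplied by $t$) and to exploit the fact that $\NID$ and $\NLD$ force the distinct exponents occurring in $\Sigma$ to be separated enough to decompose $E$ according to them. I use throughout that the injection $\Gamma(\fX,\cO_\fX)\hra L$ realizes $E_L$ as the base change of $E$ and is injective, so that $E$ embeds $\nabla$-equivariantly into $E_L$ and $\theta$ on $E$ is compatible with $\theta$ on $E_L$; since $\nabla$ is integrable, the connection in the $\fX$-directions commutes with $\theta$, which is what will make every $\theta$-canonical construction automatically a sub-$\nabla$-module. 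First I would record the consequence of the hypothesis over $L$: the given $\Sigma$-unipotent filtration of $E_L$ has successive quotients $\pi_1^*F\otimes\pi_2^*M_{\xi}$ with $\xi\in\Sigma$, on each of which $\theta$ acts by the scalar $\xi$ (it kills $\pi_1^*F$ and acts by $\xi$ on $M_{\xi}$). Hence, writing $\xi_1,\dots,\xi_s\in\Sigma$ for the distinct exponents occurring, $\theta_L$ is annihilated by $\prod_{k=1}^s(\theta_L-\xi_k)^{N}$ for $N\gg0$, and by injectivity of $E\hra E_L$ the same polynomial annihilates $\theta$ on $E$.

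The heart of the argument is the spectral decomposition $E=\bigoplus_{k=1}^s E^{(k)}$ with $E^{(k)}:=\Ker(\theta-\xi_k)^N$. By $\NID$ the $\xi_k$ are pairwise distinct modulo $\Z$, and by $\NLD$ the differences $\xi_k-\xi_l$ are $p$-adically non-Liouville. On the relative annulus one has $\theta(t^n x)=t^n(\theta+n)(x)$, so on a block where $\theta\equiv\xi_l$ the operator $\theta-\xi_k$ has ``eigenvalues'' in $\xi_l-\xi_k+\Z$, all nonzero; the non-Liouville hypothesis guarantees that $|\xi_l-\xi_k+n|^{-1}$ grows only sub-geometrically in $|n|$, so that $(\theta-\xi_k)^{-1}$ converges as an operator after shrinking $I$ slightly (here the openness of $I\subseteq(0,1)$ is used to absorb the shrinking). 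Consequently the usual Sylvester idempotents—polynomials in $\theta$ with such inverted factors as coefficients—converge and yield $\cO_\fX$-linear projectors $P_k$ onto $E^{(k)}$. These commute with $\nabla$ by integrability, so each $E^{(k)}$ is a sub-$\nabla$-module and a direct summand; since the $P_k$ are given by the same formulas over $\fX$ and over $L$, the decomposition base-changes to that of $E_L$, whence $E^{(k)}\otimes_{\Gamma(\fX,\cO_\fX)}L=E_L^{(k)}$.

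Finally I would treat each summand. Twisting $E^{(k)}$ by $\pi_2^*M_{-\xi_k}$ replaces $\theta$ by $\theta-\xi_k$, now nilpotent, so it suffices to show that a $\nabla$-module $E'$ on $\fX\times A_K^1(I)$ with $\theta$ nilpotent is unipotent in the sense of Definition \ref{unipdef} with $\Sigma=\{0\}$. Here $\Ker\theta=\Ker\nabla_{\pa/\pa t}$ (as $t$ is invertible on $I$), and the $t$-horizontal sections form a $\nabla$-module $F$ on $\fX$ with $\pi_1^*F\os{\sim}{\lra}\Ker\theta\subseteq E'$ a sub-$\nabla$-module; the quotient $E'/\pi_1^*F$ again has $\theta$ nilpotent of smaller order and, restricted to $L$, is still unipotent, so induction on the rank produces the desired filtration with graded pieces $\pi_1^*F_j$. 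Untwisting by $\pi_2^*M_{\xi_k}$ and assembling over $k=1,\dots,s$ yields a $\Sigma$-unipotent filtration of $E$.

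I expect the main obstacle to be the convergence of the spectral projectors $P_k$ in the relative setting of $\fX\times A_K^1(I)$: one must verify that the inverses $(\theta-\xi_k)^{-1}$ are bounded operators on the relevant coherent modules over each closed subinterval, uniformly enough in the $\fX$-direction that the resulting idempotents are genuine $\cO_\fX$-linear endomorphisms cutting out direct summands. This is exactly the point where the $\NLD$ hypothesis, rather than mere $\NID$, is indispensable.
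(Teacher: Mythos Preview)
The paper does not actually prove this proposition; it is quoted without proof from \cite[2.4]{sigma}. So there is no in-paper argument to compare against, and the question is simply whether your sketch is correct.

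There is a genuine error at the outset. You assert that on a graded piece $\pi_1^*F\otimes\pi_2^*M_{\xi}$ the operator $\theta=t\nabla_{\pa/\pa t}$ acts by the scalar $\xi$, and hence that $\prod_{k}(\theta-\xi_k)^N$ annihilates $E_L$. This is false: $\theta$ is a first-order differential operator in $t$, not an $\cO_{A_K^1(I)}$-linear endomorphism. Already on $M_0=(\cO_{A_K^1(I)},d)$ one has $\theta(t^n)=nt^n$, so $\theta$ has every integer as an eigenvalue and no power of $\theta$ vanishes. Consequently $E^{(k)}:=\Ker(\theta-\xi_k)^N$ is far too small to give a direct-sum decomposition of $E$, and the ``spectral decomposition'' you write down does not exist in the form stated. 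You partially realize this two sentences later when you note that on a $\xi_l$-block the operator $\theta-\xi_k$ has eigenvalues in $\xi_l-\xi_k+\Z$; but that observation directly contradicts the annihilation claim you just made, and the argument is never repaired.

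The instinct behind your second paragraph is the right one: what $\NID$ and $\NLD$ actually buy is that for $k\ne l$ the numbers $\xi_l-\xi_k+n$ ($n\in\Z$) are all nonzero with $|\xi_l-\xi_k+n|^{-1}$ of subexponential growth, so certain formal solutions of $(\theta-\xi_k)X=Y$ on the cross-terms converge on a slightly smaller annulus. But turning this into an honest decomposition requires working with a frame, writing the connection matrix, and solving an explicit Sylvester-type equation term by term in the Laurent expansion (this is the $p$-adic Fuchs/transfer argument of Christol--Dwork and Kedlaya that underlies \cite[2.4]{sigma}); it cannot be packaged as ``polynomials in $\theta$'' because $\theta$ is not $\cO$-linear. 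Your final step is also too quick: for $\theta$ nilpotent one must still argue that $\Ker\nabla_{\pa/\pa t}$ is a locally free $\cO_{\fX}$-module of the correct rank, which is exactly the unipotent generization statement of Kedlaya and is not immediate from nilpotence alone.
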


For a subset $\Lambda$ of $\Z_p$ and $\mu \in \N$, let 
$\ol{\Lambda} \subseteq \Z_p^{\mu}/\os{e}{\sim}$ be the image of 
$\Lambda^{\mu}$ by the projection $\Z_p^{\mu} \lra 
\Z_p^{\mu}/\os{e}{\sim}$. Then, by \cite[12.1]{cmsurvey}, we have 
the following characterization of $\Sigma$-unipotence: 

\begin{prop}\label{cmunip}
Let $L$ be a field containing $K$ 
complete with respect to a norm 
which extends the given absolute value of $K$. 
Let $I \subseteq (0,1)$ be an open interval and let $E$ be a 
$\nabla$-module on $A^1_L(I)$. 
Then, for a subset $\Sigma \subseteq \Z_p$ which 
is $\NID$ and $\NLD$, the following are equivalent$:$ 
\begin{enumerate}
\item 
$E$ is $\Sigma$-unipotent. 
\item 
$E$ satisfies the Robba condition and $\Exp(E) \in \ol{\Sigma}$. 
\end{enumerate}
\end{prop}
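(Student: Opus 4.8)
The plan is to derive the equivalence from the structure theorem of Christol--Mebkhout, namely \cite[12.1]{cmsurvey}, the role of the present statement being to repackage that theorem in terms of $\Sigma$-unipotence by means of Proposition \ref{nld}. First I would unwind Definition \ref{unipdef} in the case at hand, where the base is the point $\Spm L$: a $\nabla$-module $F$ on such a base carries only the trivial connection, so each graded piece $\pi_1^*F \otimes \pi_2^*M_\xi$ is a direct sum of copies of $M_\xi$. Refining the filtration, condition (1) is therefore equivalent to the existence of a filtration $0 = E_0 \subset E_1 \subset \cdots \subset E_\mu = E$ with $E_i/E_{i-1} \cong M_{\xi_i}$ and $\xi_i \in \Sigma$. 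I record two elementary facts about $M_\xi$ for later use: it satisfies the Robba condition with $\Exp(M_\xi)$ the class of $\xi$, and for every $n \in \Z$ multiplication by $t^n$, an isomorphism on $A^1_L(I)$ since $0 \notin I$, identifies $M_{\xi+n}$ with $M_\xi$.

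For the implication (1)$\,\Longrightarrow\,$(2) I would run through this filtration. Since the generic radius of convergence of an extension is at least the minimum of those of the sub and the quotient, and $R(\cdot,\rho) \leq \rho$ always, an iterated extension of modules satisfying the Robba condition again satisfies it; hence so does $E$. For the exponent I would invoke the additivity of $\Exp$ along short exact sequences from \cite{cmsurvey}, applicable because $\Sigma$ is $\NLD$ and so the differences $\xi_i - \xi_j$ are $p$-adically non-Liouville: this gives $\Exp(E) = [(\xi_1,\ldots,\xi_\mu)]$, a class lying in $\ol{\Sigma}$ because every $\xi_i \in \Sigma$.

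The substantive direction is (2)$\,\Longrightarrow\,$(1), and there the real content is supplied by \cite[12.1]{cmsurvey}. Choosing a representative $\Delta = (\Delta_1,\ldots,\Delta_\mu) \in \Sigma^\mu$ of $\Exp(E) \in \ol{\Sigma}$, the $\NLD$ condition makes every $\Delta_i - \Delta_j$ non-Liouville, which is exactly the hypothesis under which \cite[12.1]{cmsurvey} produces a filtration of the Robba-condition module $E$ with rank-one graded pieces $M_{\eta_i}$ and $\Exp(E) = [(\eta_1,\ldots,\eta_\mu)]$. It then remains to arrange the exponents to lie in $\Sigma$. From $[(\eta_i)] = [\Delta]$ together with the non-Liouville property, Proposition \ref{nld} supplies a permutation $\sigma$ and integers $n_i$ with $\eta_i = \Delta_{\sigma(i)} + n_i$; the $t^{n_i}$-twist then yields $M_{\eta_i} \cong M_{\Delta_{\sigma(i)}}$ with $\Delta_{\sigma(i)} \in \Sigma$, so every graded piece is of the required shape and $E$ is $\Sigma$-unipotent. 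Here $\NID$ plays the complementary role of rigidifying the bookkeeping: it makes the natural map $\Sigma \to \Z_p/\os{e}{\sim}$ injective, so that the condition $\Exp(E) \in \ol{\Sigma}$ records membership in $\Sigma$ unambiguously and no two distinct elements of $\Sigma$ can be conflated by the equivalence.

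I expect the main obstacle to be nothing internal to the above argument but the citation \cite[12.1]{cmsurvey} itself: the decomposition of a $\nabla$-module satisfying the Robba condition according to its exponent is the deep theorem of Christol--Mebkhout, and the rest is organization. The one step that is genuinely mine to carry out, rather than merely quoted, is the exponent-matching, where I must lift the class $[\Delta] \in \Z_p^\mu/\os{e}{\sim}$ to an honest tuple in $\Sigma^\mu$ and absorb the resulting integer shifts into the isomorphisms $M_{\xi+n} \cong M_\xi$; Proposition \ref{nld} and the $t^n$-twist dispatch this cleanly.
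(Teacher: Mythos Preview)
Your proposal is correct and follows essentially the same route as the paper: for (2)$\Rightarrow$(1) both you and the paper invoke \cite[12.1]{cmsurvey} together with Proposition \ref{nld}, and for (1)$\Rightarrow$(2) the paper's citations \cite[9.5.2, 13.5.3]{kedlayabook} (rank-one case) and \cite[4.5, 11.7]{cmsurvey} (behavior of radius and exponent in extensions) are exactly the facts you spell out by hand. Your $t^n$-twist to adjust $M_{\eta_i}$ to $M_{\Delta_{\sigma(i)}}$ is the detail implicit in the paper's bare citation of Proposition \ref{nld}.
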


\begin{proof}
(2)$\,\Longrightarrow\,$(1) follows from Proposition \ref{nld} and 
\cite[12.1]{cmsurvey}. In rank one case, 
(1)$\,\Longrightarrow\,$(2) follows from 
\cite[9.5.2, 13.5.3]{kedlayabook} and the general case follows from 
the rank one case and \cite[4.5, 11.7]{cmsurvey}. 
\end{proof}

Now we recall the definition of 
overconvergent isocrystals having $\Sigma$-unipotent monodromy. 

\begin{defn}[{\cite[3.9]{sigma}}]\label{defsigmon}
Let $(X,\ol{X}),Z=\bigcup_{i=1}^rZ_i$ be as above and 
let $Z_{\sing}$ be the set of singular points of $Z$. 
Let $\Sigma = \prod_{i=1}^r\Sigma_i$ be a 
subset of $\Z_p^r$. Then we say that an overconvergent isocrystal 
$\cE$ on $(X,\ol{X})/K$ has $\Sigma$-unipotent monodromy 
if there exist an affine open covering 
$\ol{X}-Z_{\sing} = \bigcup_{\alpha\in \Delta} \ol{U}_{\alpha}$ and 
charted smooth standard small frames 
$(
(U_{\alpha},\ol{U}_{\alpha},\cP_{\alpha},i_{\alpha},j_{\alpha}), 
t_{\alpha})$ enclosing 
$(U_{\alpha},\ol{U}_{\alpha})$ $(\alpha \in \Delta$, where we put 
$U_{\alpha} := X \cap \ol{U}_{\alpha})$ such that, for 
any $\alpha \in \Delta$, there exists some $\lam \in (0,1)\cap\Gamma^*$ 
such that the $\nabla$-module $E_{\cE, \alpha}$ associated to 
$\cE$ is defined on $\{x \in \cP_{\alpha,K} \,\vert\, 
|t_{\alpha}(x)| \geq \lam\}$ and that the restriction of $E_{\cE,\alpha}$ to 
$\cQ_{\alpha,K} \times A^1_K[\lam,1)$ is $\Sigma_i$-unipotent $($where 
$\cQ_{\alpha}$ is the zero locus of $t_{\alpha}$ in $\cP_{\alpha}$ and $i$ 
is any index with $\ol{U}_{\alpha}\cap Z \subseteq Z_i$, which is unique 
if $\ol{U}_{\alpha}\cap Z$ is non-empty. When $\ul{U}_{\alpha}\cap Z$ is 
empty, we regard this last condition as vacuous one.$)$ 
\end{defn} 

When $\Sigma$ is $\NID$ and $\NLD$, we have the following 
characterization of 
overconvergent isocrystals having $\Sigma$-unipotent monodromy: 

\begin{prop}[{\cite[3.10]{sigma}}]\label{wdsigmon}
Let $(X,\ol{X}), Z=\bigcup_{i=1}^rZ_i, \Sigma$ be as above and assume 
that $\Sigma$ is $\NID$ and $\NLD$. 
Then 
an overconvergent isocrystal $\cE$ on $(X,\ol{X})/K$ has 
$\Sigma$-unipotent monodromy 
if and only if the following condition 
is satisfied$:$ 
For any 
affine open subscheme $\ol{U} \hra \ol{X}-Z_{\sing}$ and 
any charted smooth standard small frame 
$((U,\ol{U},\cP,i,j), t)$ enclosing 
$(U,\ol{U})$ $($where we put 
$U:= X \cap \ol{U})$, there exists some $\lam \in (0,1)\cap\Gamma^*$ 
such that the $\nabla$-module $E_{\cE}$ associated to 
$\cE$ is defined on $\{x \in \cP_{K} \,\vert\, 
|t(x)| \geq \lam\}$ and that the restriction of $E_{\cE}$ to 
$\cQ_{K} \times A^1_K[\lam,1)$ is $\Sigma_i$-unipotent $($where 
$\cQ$ is the zero locus of $t$ in $P$ and $i$ is any index with 
$\ol{U}\cap Z \subseteq Z_i$, which is unique 
if $\ol{U} \cap Z$ is non-empty. When $\ul{U} \cap Z$ is 
empty, we regard this last condition as vacuous one. $)$ 
\end{prop}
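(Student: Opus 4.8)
The implication from the displayed ``for any frame'' condition to Definition \ref{defsigmon} is immediate: since the condition is demanded for \emph{any} charted smooth standard small frame, it holds in particular for the frames attached to any chosen affine open covering of $\ol{X}-Z_{\sing}$, which is exactly what Definition \ref{defsigmon} requires (such frames exist locally). The real content is the converse, so assume $\cE$ has $\Sigma$-unipotent monodromy via some covering $\{\ol{U}_{\alpha}\}_{\alpha}$ and frames $\{(\bP_{\alpha},t_{\alpha})\}_{\alpha}$, and let $((U,\ol{U},\cP,i,j),t)$ be an arbitrary charted smooth standard small frame. I may assume $\ol{U}\cap Z\not=\emptyset$ (the empty case being vacuous), so $\ol{U}\cap Z\subseteq Z_i$ for a unique $i$ and $\cQ$ is irreducible whose generic point maps to the generic point $\eta_i$ of $Z_i$.

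The plan is to reduce $\Sigma_i$-unipotence of $E_{\cE}|_{\cQ_K\times A^1_K[\lam,1)}$ to two intrinsic invariants of $\cE$ at $\eta_i$. Let $L$ be the completion of $\Frac \Gamma(\cQ_K,\cO_{\cQ_K})$ with respect to the supremum norm, and let $E_{\cE,L}$ be the induced $\nabla$-module on $A^1_L[\lam,1)$. By Proposition \ref{generization} (using that $\Sigma_i$ is $\NID$ and $\NLD$), the restriction of $E_{\cE}$ to $\cQ_K\times A^1_K(\lam,1)$ is $\Sigma_i$-unipotent as soon as $E_{\cE,L}$ is $\Sigma_i$-unipotent on $A^1_L(\lam,1)$, the reverse implication being trivial by pullback. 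Now $E_{\cE,L}$ is solvable by Proposition \ref{ocsolv}, so by Proposition \ref{cmunip} its $\Sigma_i$-unipotence is equivalent to the conjunction of the Robba condition (equivalently, by Corollary \ref{elementary}, highest ramification break $0$) and $\Exp(E_{\cE,L})\in\ol{\Sigma_i}$. Everything is thereby reduced to checking these two invariants.

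The crux is that the highest ramification break and the exponent computed from $E_{\cE,L}$ depend only on $\cE$ and on $\eta_i$, not on the frame or the transverse coordinate $t$. Since $Z_{\sing}$ is a proper closed subset of $Z$, the point $\eta_i$ lies in $\ol{X}-Z_{\sing}$, hence in some $\ol{U}_{\alpha}$; as $Z=Z_i$ near $\eta_i$, that index is $i$, so $\eta_i$ is common to the arbitrary frame $\bP$ and to at least one frame $\bP_{\alpha}$ of the given covering. Because $\cE$ is an overconvergent isocrystal, its realizations on two frames enclosing opens that both meet $Z_i$ are canonically identified over the overlap by the transition (cocycle) isomorphisms of the crystal, compatibly with the connections (\cite{berthelotrig}, \cite{kedlayaI}). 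Restricting to the boundary annulus around $\eta_i$, the differential modules $E_{\cE,L}$ arising from $\bP$ and from $\bP_{\alpha}$ are thus isomorphic as $\nabla$-modules up to a change of transverse coordinate (a change of uniformizer of $Z_i$), and both the highest ramification break and the exponent are invariant under such a change of coordinate, since it extends to an automorphism of the boundary annulus preserving the generic radius of convergence and the resolvent data up to $\os{e}{\sim}$.

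Combining these steps: the frame $\bP_{\alpha}$ with $\eta_i\in\ol{U}_{\alpha}$ realizes the $\Sigma_i$-unipotence of the definition, so by Proposition \ref{cmunip} its $E_{\cE,L}$ has highest ramification break $0$ and exponent in $\ol{\Sigma_i}$; by the intrinsicness of the previous paragraph, the same holds for the $E_{\cE,L}$ built from $\bP$. Reversing the reductions --- Proposition \ref{cmunip} followed by Proposition \ref{generization} --- yields that $E_{\cE}|_{\cQ_K\times A^1_K[\lam,1)}$ is $\Sigma_i$-unipotent, as required. I expect the main obstacle to be the rigorous formulation of the intrinsicness: one must verify that the transition isomorphisms of the overconvergent isocrystal do restrict to isomorphisms of the boundary $\nabla$-modules compatible with a change of transverse coordinate, and that this identification preserves the exponent \emph{class}; this is precisely where the functoriality of the realization in \cite{berthelotrig}, \cite{kedlayaI} and the coordinate-independence results of Christol--Mebkhout are used most heavily.
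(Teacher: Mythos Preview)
The paper does not actually prove this proposition: it is quoted verbatim from the author's earlier work \cite[3.10]{sigma}, with no argument given here. So there is no ``paper's own proof'' to compare against in the present manuscript.

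That said, your outline is the natural one and is almost certainly what underlies the cited result. The reduction to the generic point via Proposition~\ref{generization}, together with the observation that the relevant data at $\eta_i$ are independent of the frame, is exactly the expected mechanism. Two comments on the execution. First, the detour through the Robba condition and $\Exp$ via Proposition~\ref{cmunip} is not really needed: once you have reduced both sides to the $\nabla$-module $E_{\cE,L}$ on the one-dimensional annulus over the completed function field at $\eta_i$, you can transport $\Sigma_i$-unipotence directly under the isomorphism of $\nabla$-modules coming from the crystal structure, rather than decomposing it into numerical invariants. (Remark~\ref{lrem} already records that the choice of $L$ is immaterial for the ramification break; the analogous independence for $\Sigma$-unipotence is just as easy once the comparison isomorphism is in hand.) Second, the genuinely delicate point---which you correctly flag---is making precise the comparison between the realizations of $\cE$ on two different frames $\bP$ and $\bP_{\alpha}$: one must produce, from the cocycle data of the overconvergent isocrystal, an isomorphism of the boundary $\nabla$-modules that is compatible with a change of transverse parameter $t\mapsto t_{\alpha}$. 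This is where the functoriality of the realization in \cite{berthelotrig}, \cite{kedlayaI} and the frame-comparison arguments of \cite{sigma} are doing the work, and it is precisely what \cite[3.10]{sigma} packages. Your sketch is correct in spirit; to make it a proof one either reproduces that comparison or cites it.
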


Next, we proceed to recall the notion of isocrystals on log convergent site 
with exponents in $\Sigma$. To do so, first we should recall the 
definition of log-$\nabla$-modules and exponents of them. 

\begin{defn}[{\cite[2.3.7]{kedlayaI}}]\label{lognabdef}
Let $\fX$ be a rigid space over $K$ 
and let $x_1, ..., x_r$ be elements in $\Gamma(\fX,\cO_{\fX})$. 
Then a log-$\nabla$-module on $\fX$ with respect to $x_1, ..., x_r$ 
is a locally free $\cO_{\fX}$-module 
$E$ endowed with an integrable $K$-linear log connection 
$\nabla: E \lra E \otimes_{\cO_{\fX}} \omega^1_{\fX}$, where 
$\omega^1_{\fX}$ denotes the sheaf of continuous log differentials 
with respect to $\dlog x_i \, (1 \leq i \leq r)$. 
\end{defn} 

When we are given $\fX, x_1,...,x_r, (E,\nabla)$ as above, 
we can define the residue $\res_i$ of $E$ along 
$\fD_i = \{x_i=0\}\,(1 \leq i \leq r)$, which is an element in 
$\End_{\cO_{\fD_i}}(E|_{\fD_i})$, in the same way as in the case of 
integrable log connection on algebraic varieties. When $\fX$ is 
smooth, zero loci of $x_i$ are affinoid smooth and meet 
transversally, there exists a minimal monic polynomial 
$P_i(x) \in K[x]$ with $P_i(\res_i)=0 \,(1 \leq i \leq r)$ by the 
argument of \cite[1.5.3]{bc}. 
We call the roots of $P_i(x)$ the exponents of $(E,\nabla)$ along 
$\fD_i$. \par 
If $\ol{U}$ is an open subscheme of $\ol{X}$ and 
$((U,\ol{U},\cP,i,j),(t_1,...,t_r))$ is 
a charted standard small frame enclosing $(U,\ol{U})$ (where 
$U := X \cap \ol{U}$), 
an isocrystal $\cE$ on log convergent site $((\ol{X},M_{\ol{X}})/O_K)_{\conv}$ 
induces a log-$\nabla$-module on $\cP_K$ with respect to 
$t_1,...,t_r$, which we denote by $E_{\cE} = 
(E_{\cE},\nabla)$. Using this construction, 
we can define the notion of isocrystals on log convergent site 
with exponents in $\Sigma$ in the following way. 

\begin{defn}\label{defsigexpo}
Let $(X,\ol{X}), M_{\ol{X}}$ be as above and 
let $\Sigma = \prod_{i=1}^r\Sigma_i$ be a 
subset of $\Z_p^r$. 
Then we say that a locally free isocrystal $\cE$ on $((\ol{X},M)/O_K)_{\conv}$ 
has exponents in $\Sigma$ if there exist an affine open covering 
$\ol{X} = \bigcup_{\alpha\in \Delta} \ol{U}_{\alpha}$ and 
charted standard small frames 
$(
(U_{\alpha},\ol{U}_{\alpha},\cP_{\alpha},i_{\alpha},j_{\alpha}), 
(t_{\alpha,1}, ..., t_{\alpha,r}))$ enclosing 
$(U_{\alpha},\ol{U}_{\alpha})$ $(\alpha \in \Delta$, where we put 
$U_{\alpha} := X \cap \ol{U}_{\alpha})$ such that, for 
any $\alpha \in \Delta$ and any $i$ $(1 \leq i \leq r)$, 
all the exponents of the log-$\nabla$-module $E_{\cE, \alpha}$ 
on $\cP_{\alpha,K}$ induced by $\cE$ along the locus $\{t_{\alpha,i}=0\}$ 
are contained in $\Sigma_i$. 
\end{defn}

Then the main result of the paper \cite{sigma} is as follows: 

\begin{thm}[{\cite[3.16]{sigma}}]\label{mainsigma}
Let $(X,\ol{X}), Z=\bigcup_{i=1}^rZ_i, M_{\ol{X}}$ be as above 
and let $\Sigma := \prod_{i=1}^r\Sigma_i$ be a subset of 
$\Z_p^r$ which is $\NID$ and $\NLD$. Then we have the canonical 
equivalence of categories 
\begin{equation}\label{maineq1}
j^{\d}: \left( 
\begin{aligned}
& \text{{\rm isocrystals on}} \\ 
& \text{$((\ol{X},M)/O_K)_{\conv}$} \\ 
& \text{{\rm with exponents in $\Sigma$}}
\end{aligned}
\right) 
\os{=}{\lra} 
\left( 
\begin{aligned}
& \text{{\rm overcongent isocrystals}} \\ 
& \text{{\rm on $(X,\ol{X})/K$ having}} \\
& \text{{\rm $\Sigma$-unipotent monodromy}} 
\end{aligned}
\right), 
\end{equation}
which is defined by the restriction. 
\end{thm}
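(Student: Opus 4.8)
The plan is to establish the three constituent assertions behind an equivalence of categories --- that the restriction functor $j^\d$ is well-defined, fully faithful, and essentially surjective --- by reducing each to a local statement on charted standard small frames and then invoking the Christol--Mebkhout theory recalled in Subsections 1.2--1.4. Morally this is the $p$-adic incarnation of Deligne's canonical extension: the role played there by a fixed set of representatives of $\C/\Z$, with the no-integer-difference hypothesis guaranteeing uniqueness, is played here by $\Sigma$, where $\NID$ supplies uniqueness of the log extension and $\NLD$ supplies the $p$-adic convergence that makes it exist at all. Since both the source and target categories are defined by purely local conditions on an affine open covering, I would work one frame at a time and glue at the end.

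First I would dispose of the well-definedness of $j^\d$, the easy direction. Given a locally free isocrystal on $((\ol X,M)/O_K)_\conv$ with exponents in $\Sigma$, restriction yields an overconvergent isocrystal on $(X,\ol X)/K$, and one must verify $\Sigma$-unipotent monodromy. On a charted smooth standard small frame $((U,\ol U,\cP,i,j),t)$ the isocrystal supplies a log-$\nabla$-module on $\cP_K$ whose residue along $\{t=0\}$ has all exponents in $\Sigma_i$. The rank-one model is $M_\xi=(\cO,d+\xi\dlog t)$ for $\xi\in\Sigma_i$, whose restriction to the relative annulus satisfies the Robba condition with exponent $\xi$, hence is $\Sigma_i$-unipotent. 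Putting the residue in block-triangular form --- possible precisely because $\NID$ forbids the resonances that would obstruct a filtration --- reduces the general case to this model, so the restriction to $\cQ_K\times A^1_K[\lam,1)$ admits a filtration with successive quotients $\pi_1^*F\otimes\pi_2^*M_\xi$, that is, is $\Sigma_i$-unipotent. By Definition \ref{defsigmon} this is exactly $\Sigma$-unipotent monodromy.

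The substantive content is essential surjectivity together with full faithfulness. Given an overconvergent isocrystal $\cE$ with $\Sigma$-unipotent monodromy, on each charted smooth standard small frame the associated $\nabla$-module on $\cQ_K\times A^1_K[\lam,1)$ is $\Sigma_i$-unipotent, hence by Proposition \ref{cmunip} satisfies the Robba condition with $\Exp\in\ol{\Sigma_i}$. I would then run the Christol--Mebkhout exponent machinery (Theorem-Definition \ref{exponent} together with Proposition \ref{nld}) to manufacture a canonical log-$\nabla$-module extension across $\{t=0\}$: the resolvent and the matrices $S_h$ yield, under $\NLD$, a convergent change of basis bringing $\nabla$ into logarithmic form whose residue eigenvalues represent $\Exp$, while $\NID$ selects the unique such representative lying in $\Sigma_i$. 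Here Proposition \ref{nld} is exactly the device that upgrades the equivalence-class statement $\Exp\in\ol{\Sigma_i}$ to honest membership of the residue eigenvalues in $\Sigma_i$. Because the local extension is characterized by its exponents it is canonical, so the extensions over different frames automatically agree on overlaps; full faithfulness drops out at the same time, since a morphism of overconvergent isocrystals respects the $\Sigma$-unipotent structure and therefore, by uniqueness of the canonical extension, extends uniquely to the log convergent extensions, injectivity on morphisms being clear as $X$ is dense.

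The main obstacle will be gluing these local log extensions into a single isocrystal on $((\ol X,M)/O_K)_\conv$ while controlling the behaviour along the singular locus $Z_\sing$. The overconvergent data is organized only over $\ol X-Z_\sing$ through smooth ($r=1$) frames, whereas the target demands charted standard small frames over all of $\ol X$ carrying several coordinates $t_1,\dots,t_r$ at once. I would extend the canonically glued object across the codimension $\geq 2$ stratum $Z_\sing$ by a normality and descent argument for log convergent isocrystals, exploiting that the multi-variable exponent condition along a crossing is determined branch-by-branch by the one-variable conditions (compatibility of the residues of the mutually commuting log connections), so that no further choices intervene and the exponents stay in $\prod_i\Sigma_i$. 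Verifying this compatibility, and thereby that the assembled object is a genuine isocrystal with exponents in $\Sigma$ rather than a mere collection of local log-$\nabla$-modules, is the technical heart of the argument.
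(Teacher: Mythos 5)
A point of order first: this paper does not prove Theorem \ref{mainsigma} at all. The theorem is quoted from the author's previous paper \cite{sigma} (where it is Theorem 3.16) and serves in the present paper only as an input to the proof of Theorem \ref{mainthm}; so there is no proof here to compare yours against, and your sketch has to be measured against what such a proof actually requires.

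Measured that way, your outline has the right global shape (a local extension theorem on charted frames, then gluing), but the step that carries all the weight is missing, and in its place is a conflation. For essential surjectivity you start from $\Sigma$-unipotent monodromy, pass via Proposition \ref{cmunip} to the Robba condition with $\Exp \in \ol{\Sigma_i}$, and then claim that the Christol--Mebkhout matrices $S_h$ produce, under $\NLD$, ``a convergent change of basis bringing $\nabla$ into logarithmic form.'' But the Christol--Mebkhout machinery lives entirely on the annulus, and its output is precisely the structure theorem already packaged in Proposition \ref{cmunip}: a filtration with graded pieces $\pi_1^*F \otimes \pi_2^* M_\xi$ --- which is your hypothesis, not progress. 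The genuine problem is extending across $\{t=0\}$: each graded piece extends canonically to the relative disc, but the module is assembled from these pieces by extension classes, so one must prove that the restriction maps $\Hom$ and $\Ext^1$ computed on the relative disc (between the canonical log extensions) to $\Hom$ and $\Ext^1$ computed on the relative annulus are bijective. These comparisons for the modules $M_\xi$, $\xi \in \Sigma_i$, are where $\NID$ and $\NLD$ do their real work (solving $(n+\eta-\xi)a_n = c_n$ coefficientwise: $\NID$ makes the formal solution exist and be unique, $\NLD$ makes it converge), and they are the technical core of the argument in \cite{sigma}; nothing in your sketch substitutes for them. The same comparison is exactly what full faithfulness amounts to, which you assert (``drops out at the same time'') rather than prove --- density of $X$ gives injectivity on morphisms but says nothing about surjectivity of the restriction map on $\Hom$.

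The second gap you flag yourself but underestimate: the source category imposes exponent conditions along all branches over all of $\ol{X}$, including the crossing locus, while $\Sigma$-unipotent monodromy (Definition \ref{defsigmon}) is a condition only on $\ol{X}-Z_{\sing}$ through one-variable frames. Bridging this requires a several-variable version of the local extension theorem near the crossings and an argument that the one-variable condition propagates to it; dismissing it as ``a normality and descent argument'' leaves unproved precisely the part of the statement that distinguishes the log-geometric target category from a collection of one-variable extensions.
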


\section{Proofs}

In this section, we give a proof of Theorem \ref{mainthm}. In the 
course of the proof, we prove a kind of cut-by-curves criteria 
on solvability, highest ramification break and exponent of $\nabla$-modules. 

\subsection{Geometric set-up}

In this subsection, we give a geometric set-up which we use in the 
following two subsections. To do this, first let us recall the notion 
of tubular neighborhood (for schemes), which is introduced 
in \cite{ab}. 

\begin{defn}[{\cite[I 1.3.1]{ab}}]
Let $\ol{X}$ be a smooth $k$-variety and let $i:Z \hra \ol{X}$ be a 
smooth divisor. Then $(f:\ol{X} \lra Y, t:\ol{X} \lra \Af^1_k)$ is 
a coodinatized tubular neighborhood of $Z$ in $X$ if it admits the 
following commutative diagram 
\begin{equation}
\label{tn}
\xymatrix{
Z \ar@{^{(}->}[r]^i \ar[d] \ar@{}[rd]|\Box & \ol{X} \ar[d]^{\phi} \\
Y \ar@{^{(}->}[r] \ar@{=}[rd] & \Af^1_Y \ar[r]^{\pr_1} 
\ar@{}[rd]|\Box 
\ar[d]^{\pr_2} & \Af^1_k \ar[d] \\ 
& Y \ar[r] & \Spec k, 
}
\end{equation}
where $Y$ is an affine connected smooth $k$-variety, 
$\phi$ is etale, two squares are Cartesian and $f=\pr_2 \circ \phi, 
t=\pr_1 \circ \phi$. 
\end{defn}

\begin{rem} 
The smoothness of $\ol{X}, Z, Y$ and the affineness, connectedness of 
$Y$ are not assumed in \cite{ab}. However, this change 
does not cause any problem 
for our purpose. Also, the above diagram looks different from that 
given in \cite[I 1.3.1]{ab}, but they are in fact equivalent. 
\end{rem}

We have the following result on the existence of 
coodinatized tubular neighborhoods. 

\begin{lem}[{\cite[I 1.3.2]{ab}}]
Let $\ol{X}$ be a smooth $k$-variety and let $Z$ be a smooth divisor on it. 
Then there exists an open covering $\ol{X} = 
\bigcup_{\alpha} \ol{X}_{\alpha}$ such that each $Z \cap \ol{X}_{\alpha} 
\hra \ol{X}_{\alpha}$ admits a structure of a 
coodinatized tubular neighborhood. 
\end{lem}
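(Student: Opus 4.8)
The plan is to reduce the statement to the local structure of a smooth divisor inside a smooth variety: such a divisor is cut out, near each of its points, by a single coordinate function that can be completed to an \'etale coordinate system, and the coordinatized tubular neighborhood is then nothing but the resulting \'etale map to an affine space, with $Y$ taken to be an affine space of one lower dimension. Write $n := \dim \ol{X}$.

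First I would work near a point $z \in Z$. Since $Z$ is a smooth divisor, its ideal sheaf $\cI_Z$ is invertible, so after passing to an affine open neighborhood $\ol{X}_{\alpha} \ni z$ there is a function $t \in \Gamma(\ol{X}_{\alpha}, \cO_{\ol{X}_{\alpha}})$ with $Z \cap \ol{X}_{\alpha} = \{t = 0\}$ scheme-theoretically; as $Z$ is smooth, $t$ is a regular parameter, so $dt$ is nonzero along $Z$. Using the smoothness of $\ol{X}$ at $z$, the module $\Omega^1_{\ol{X}_{\alpha}/k}$ is locally free of rank $n$ and $dt(z) \neq 0$, so I can complete $dt$ to a basis: choose $g_1, \dots, g_{n-1} \in \Gamma(\ol{X}_{\alpha}, \cO_{\ol{X}_{\alpha}})$ such that $dt, dg_1, \dots, dg_{n-1}$ form a basis of $\Omega^1_{\ol{X}_{\alpha}/k}$ at $z$. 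By the Jacobian criterion the morphism $\phi := (t, g_1, \dots, g_{n-1}) \colon \ol{X}_{\alpha} \lra \Af^n_k$ is then \'etale at $z$, hence \'etale after shrinking $\ol{X}_{\alpha}$.

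Next I set $Y := \Af^{n-1}_k$, which is affine, connected and smooth, and identify $\Af^1_Y = \Af^1_k \times_k \Af^{n-1}_k = \Af^n_k$ so that $\pr_1, \pr_2$ become the two projections. Put $t = \pr_1 \circ \phi$ and $f := (g_1, \dots, g_{n-1}) = \pr_2 \circ \phi$. The right-hand square of \eqref{tn} is Cartesian by the definition of $\Af^1_Y$, while the left-hand square is Cartesian precisely because $\phi^{-1}(\{0\} \times Y) = \{t = 0\} = Z \cap \ol{X}_{\alpha}$, the middle equality being the scheme-theoretic identification chosen above (this also shows $\phi \circ i$ factors through the zero section $Y \hra \Af^1_Y$). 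Thus $(\phi, t, f)$ equips $Z \cap \ol{X}_{\alpha} \hra \ol{X}_{\alpha}$ with the structure of a coordinatized tubular neighborhood. For a point $x \in \ol{X} - Z$ the same construction applies after first shrinking to an affine neighborhood disjoint from $Z$: choose any \'etale coordinate system $(t, g_1, \dots, g_{n-1})$ near $x$, arrange $t(x) \neq 0$ by adding a suitable constant (which does not change $dt$, hence preserves \'etaleness), and restrict to the open locus $\{t \neq 0\}$; then $\{t = 0\} = \emptyset = Z \cap \ol{X}_{\alpha}$ and both squares are again Cartesian. Covering $\ol{X}$ by all such $\ol{X}_{\alpha}$ yields the desired open covering.

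The argument is essentially routine once the local coordinates are in place; the only points I would treat as the main obstacle are (i) selecting $t$ so that it simultaneously cuts out $Z$ \emph{scheme-theoretically} and extends to an \'etale coordinate system, which is exactly where the smoothness of both $\ol{X}$ and $Z$ enters, and (ii) verifying the left-hand Cartesian square, which reduces to that scheme-theoretic identity. The bookkeeping of the two Cartesian squares together with the connectedness requirement on $Y$ (satisfied by taking $Y = \Af^{n-1}_k$) is then immediate.
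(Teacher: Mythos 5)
The paper itself gives no proof of this lemma --- it is quoted from \cite[I 1.3.2]{ab} as a known fact --- so there is no internal argument to compare against; your proposal supplies exactly the standard argument one expects behind that citation, and it is correct: trivialize the invertible ideal sheaf of $Z$ to get a local equation $t$, complete $dt$ to a basis of $\Omega^1_{\ol{X}/k}$ by exact differentials, use the resulting \'etale map $\phi=(t,g_1,\dots,g_{n-1})$ to $\Af^n_k=\Af^1_Y$ with $Y=\Af^{n-1}_k$, and check both Cartesian squares from the scheme-theoretic identity $Z\cap\ol{X}_\alpha=\{t=0\}$, handling points off $Z$ by a translation. One step should be worded more carefully in the present characteristic-$p$ setting, where $k$ may be imperfect: the inference ``$t$ is a regular parameter, so $dt$ is nonzero along $Z$'' is not valid as stated, since a regular parameter can have vanishing differential --- e.g.\ $t=x^p-\lambda$ with $\lambda\notin k^p$ cuts out a regular (but non-smooth) divisor in $\Af^2_k$ with $dt=0$. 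What you actually need, and what your appeal to smoothness of $Z$ implicitly provides, is that for $Z$ and $\ol{X}$ both \emph{smooth over} $k$ the conormal sequence $0 \lra \cI_Z/\cI_Z^2 \lra \Omega^1_{\ol{X}/k}\otimes\cO_Z \lra \Omega^1_{Z/k} \lra 0$ is exact with all terms locally free, so that $t$ generates a line subbundle of $\Omega^1_{\ol{X}/k}\vert_Z$ and $dt$ is nowhere vanishing along $Z$; regularity of $Z$ alone would not suffice. With that justification substituted, your proof is complete.
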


In the next subsection, we will consider the following 
geometric situation: 

\begin{hypo}\label{hyp}
Let $\ol{X}$ be a connected affine smooth $k$-variety, let 
$i:Z \hra \ol{X}$ be a non-empty connected 
smooth divisor admitting a structure of a 
coodinatized tubular neighborhood and fix a diagram \eqref{tn}. 
Then there exists a lifting of the diagram \eqref{tn} to a diagram 
of smooth formal schemes over $\Spf O_K$, 
that is, there exists a diagram 
\begin{equation}
\label{tn2}
\xymatrix{
\cZ \ar@{^{(}->}[r]^{\wh{i}} \ar[d] \ar@{}[rd]|\Box & 
\ol{\cX} \ar[d]^{\wh{\phi}} \\
\cY \ar@{^{(}->}[r] \ar@{=}[rd] & \wh{\Af}^1_{\cY} \ar[r]^{\wh{\pr}_1} 
\ar@{}[rd]|\Box 
\ar[d]^{\wh{\pr}_2} & \wh{\Af}^1_{O_K} \ar[d] \\ 
& \cY \ar[r] & \Spf O_K 
}
\end{equation}
consisting of smooth formal schemes over $\Spf O_K$ 
whose special fiber gives the diagram \eqref{tn} 
such that $\wh{\phi}$ is etale and two squares are Cartesian. 
We fix this diagram. Also, we denote the composite 
$\wh{\pr}_1 \circ \wh{\phi}$ also by $t$, by abuse of notation. 
Let us put the open immersion $X := \ol{X}-Z \hra \ol{X}$ by $j$ and 
the closed immersion $\ol{X} \hra \ol{\cX}$ by $\iota$. Also, 
we fix an inclusion $\Gamma(\cZ_K,\cO_{\cZ_K}) \hra L$, where 
$L$ is a field complete with respect to a norm which restricts to 
the supremum norm on $\cZ_K$. \par 
For a separable closed point $y$ in $Y$, 
a lift of $y$ in $\cY$ is a closed sub formal scheme 
$\ti{y} \hra \cY$ etale over $\Spf O_K$ such that 
$\ti{y} \times_{\cY} Y = y$. For a separable closed point $y$ in $Y$ and 
its lift $\ti{y}$ in $\cY$, 
denote the pull-back  by 
$\Af^1_{y} \hra \Af^1_{Y}, \Af^1_{\ti{y}} \hra \Af^1_{\cY}$ 
of the upper square of \eqref{tn}, \eqref{tn2} by 
\begin{equation}\label{tn-y}
\xymatrix{
Z_y \ar@{^{(}->}[r] \ar[d] \ar@{}[rd]|\Box & \ol{X}_y \ar[d] \\
y \ar@{^{(}->}[r] & \Af^1_y, 
}
\hspace{3cm}
\xymatrix{
\cZ_{\ti{y}} \ar@{^{(}->}[r] \ar[d] \ar@{}[rd]|\Box & \ol{\cX}_{\ti{y}} 
\ar[d] \\ \ti{y} \ar@{^{(}->}[r] & \hat{\Af}^1_{\ti{y}}, 
}
\end{equation}
respectively, and let us put $X_y := \ol{X}_y - Z_y$. 
\end{hypo}

Under Hypothesis \ref{hyp}, $((X,\ol{X},\ol{\cX},\iota,j),t)$ forms 
a charted smooth standard small frame. So, an overconvergent 
isocrystal $\cE$ on $(X,\ol{X})/K$ induces a $\nabla$-module 
on $\fU_{\lam} = \{x \in \ol{\cX}_K\,\vert\, |t(x)| \geq \lam\}$ 
for some $\lam \in (0,1)\cap\Gamma^*$, hence a $\nabla$-module 
$E_{\cE}$ on $\cZ_K \times A^1_K[\lam,1)$. 
We denote the induced $\nabla$-module on $A^1_L[\lam,1)$ by 
$E_{\cE,L}$. On the other hand, for a separable closed 
point $y \in Y$ and its 
lift $\ti{y}$ in $\cY$, 
we can restrict $\cE$ to an overconvergent isocrystal $\cE_y$ on 
$(X_y,\ol{X}_y)/K$, and by using the diagrams \eqref{tn-y}, 
we can define the $\nabla$-module $E_{\cE,\ti{y}}$ on 
$\cZ_{\ti{y},K} \times A^1_K[\lam,1)$ for some $\lam$. 
$E_{\cE,\ti{y}}$ is nothing but the restriction of $E_{\cE}$ to 
$\cZ_{\ti{y},K} \times A^1_K[\lam,1)$. \par 
Note that, in the situation of Hypothesis \ref{hyp}, we have 
a closed immersion $(\ol{X}_y, \allowbreak 
M_{\ol{X}_y}) \hra (\ol{X},M_{\ol{X}})$, 
where $M_{\ol{X}_y}, M_{\ol{X}}$ denotes the log structure on $\ol{X}_y, 
\ol{X}$ associated to $Z_y, Z$, respectively. Note also that 
$\ol{X}_y$ is a curve, $Z_y$ is a smooth divisor on it and that 
$\cZ_{\ti{y}}$ is a formal scheme etale over 
$\Spf O_K$. Hence $\cZ_{\ti{y},K} \times A^1_K[\lam,1)$ has the form 
$\coprod_{j=1}^a A_{K_j}[\lam,1)$ for some finite extensions 
$K_j \,(1 \leq j \leq a)$ of $K$. 

\subsection{Criteria on solvability, highest ramification break and exponent}

Let the notations be as in Hypothesis \ref{hyp} and fix 
$\lam \in (0,1)\cap\Gamma^*$. Let $E$ be a $\nabla$-module on 
$\cZ_K \times A^1_K[\lam,1)$ relative to $\cZ_K$ and let $E_L$ be 
the $\nabla$-module on $A^1_L[\lam,1)$ induced by $E$. On the other hand, 
for a separable closed point $y$ in $Y$ and its lift $\ti{y}$ in $\cY$, 
let $E_{\ti{y}}$ be the restriction of $E$ to $\cZ_{\ti{y},K} \times 
A^1_K[\lam,1)$. First we prove a kind of cut-by-curves criterion on 
highest ramification break, assuming certain solvability: 

\begin{thm}\label{thm1}
Let the notations be as above and assume the condition $(*)$ below$:$\\
\quad \\
$(*)$ \,\,\,\, $E_L$ is solvable and for 
any separable closed point $y \in Y$ and its lift $\ti{y}$ in $\cY$, 
$E_{\ti{y}}$ is also solvable on any connected component of 
$\cZ_{\ti{y},K} \times A^1_K[\lam,1)$. \\
\quad \\
Then the following are equivalent$:$ 
\begin{enumerate}
\item 
$E_{L}$ has highest ramification break $b$. 
\item 
There exists a dense open subset $U \subseteq Y$ such that, for any 
separable closed point $y$ in $U$ and its lift $\ti{y}$ in $\cY$, 
$E_{\ti{y}}$ has 
highest ramification break $b$ on any connected component of 
$\cZ_{\ti{y},K} \times A^1_K[\lam,1)$. 
\end{enumerate}
\end{thm}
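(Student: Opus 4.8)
The plan is to reduce the whole statement, by means of the two-radius characterization of the highest ramification break in Corollary \ref{elementary}(2), to a single comparison between the generic radius $R(E_L,\rho)$ and the fibrewise radii $R(E_{\ti{y}},\rho)$ at a fixed $\rho$ near $1$. The first ingredient is the \textbf{fundamental inequality}: fixing a basis of $E$ relative to $\cZ_K$ and writing $G_n$ for the matrix of $\pa^n$, the entries of $G_n/n!$ are functions on $\cZ_K \times A^1_K(I')$, and since the norm on $L$ restricts to the supremum norm on $\cZ_K$, evaluation at any lift $\ti{y}$ gives $|G_n/n!|_{\ti{y},\rho} \leq |G_n/n!|_{L,\rho}$ for all $n,\rho$. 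Passing to $\varliminf_n(\cdot)^{-1/n}$ in the matrix formula for $R$ yields $R(E_{\ti{y}},\rho) \geq R(E_L,\rho)$ for every $\ti{y}$, on each connected component of $\cZ_{\ti{y},K}\times A^1_K[\lam,1)$.

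The heart of the argument is \textbf{generic equality at a fixed $\rho$}: for each fixed $\rho$ close to $1$ there is a Zariski-dense open $U_\rho \subseteq Y$ with $R(E_{\ti{y}},\rho) = R(E_L,\rho)$ for every separable closed $y \in U_\rho$ and every lift $\ti{y}$. If the common break is $0$ this is automatic, since $R(E_L,\rho)=\rho$ together with $R(E_{\ti{y}},\rho)\leq\rho$ forces equality. For break $b\geq 1$ the relation $R(E_L,\rho)=\rho^{b+1} > p^{-1/(p-1)}\rho$ holds for $\rho$ near $1$, which is outside the range where the proposition of Christol--Dwork (\cite[Theorem 1.5]{cd}) computes the spectral norm. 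To enter that range I apply the Frobenius antecedent of Proposition-Definition \ref{frobant}, in the relative form of Proposition \ref{frobantrel} (after enlarging $K$ to contain a $p$-th root of unity, which affects neither solvability nor the break), as long as the hypothesis $R>p^{-1/(p-1)}\cdot(\text{radius})$ permits; for fixed $\rho<1$ this stops after finitely many steps $m$ at a relative $\nabla$-module $F$ on $\cZ_K\times A^1_K$ with $R(F,\sigma)=R(E,\rho)^{p^m}=\sigma^{b+1}\leq p^{-1/(p-1)}\sigma$ at $\sigma=\rho^{p^m}$. For such $F$ the spectral norm of $\pa$, hence $R(F,\sigma)$, is governed by the finitely many coefficients $a_0,\dots,a_{\mu-1}$ of a cyclic vector through the Newton-polygon formula of Christol--Dwork. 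Spreading that cyclic vector over a dense open of $\cZ_K$ (discarding the proper closed loci where the $a_i$ acquire poles or where cyclicity degenerates) and using that the supremum norm over $\cZ_K$ of each $a_i$ is attained off a proper closed subset, I get a dense open on which $|a_i(\ti{y})|_\sigma = |a_i|_{L,\sigma}$ for all $i$ at once, whence $R(F_{\ti{y}},\sigma)=R(F_L,\sigma)$. As the construction of Proposition \ref{frobantrel} commutes with restriction to $\ti{y}$, this descends to $R(E_{\ti{y}},\rho)^{p^m}=R(E_L,\rho)^{p^m}$, i.e. equality on $U_\rho$.

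With Steps 1 and 2 in hand the \textbf{conclusion} is formal. For (1)$\,\Longrightarrow\,$(2): by Corollary \ref{elementary}(2) choose $\rho_1<\rho_2$ in $[\lam,1)$ with $R(E_L,\rho_i)=\rho_i^{b+1}$, and set $U := U_{\rho_1}\cap U_{\rho_2}$, a dense open; for $y\in U$ Step 2 gives $R(E_{\ti{y}},\rho_i)=\rho_i^{b+1}$, and since $E_{\ti{y}}$ is solvable by $(*)$, Corollary \ref{elementary}(2) shows $E_{\ti{y}}$ has break $b$ on each component. For (2)$\,\Longrightarrow\,$(1): given the dense open $U$ of condition (2), pick $\rho_1<\rho_2$ close to $1$ and a separable closed $y\in U\cap U_{\rho_1}\cap U_{\rho_2}$, which is non-empty because $Y$ is irreducible; then the break-$b$ module $E_{\ti{y}}$ satisfies $R(E_{\ti{y}},\rho_i)=\rho_i^{b+1}$ by Corollary \ref{elementary}, Step 2 transports this to $R(E_L,\rho_i)=\rho_i^{b+1}$, and since $E_L$ is solvable by $(*)$, Corollary \ref{elementary}(2) gives that $E_L$ has break $b$.

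I expect \textbf{Step 2 to be the main obstacle}. At a fixed $\rho$ the comparison of radii involves a $\varliminf_n$ over the order of the differential operator, and a naive fibrewise bound would require intersecting the dense opens on which each individual $|G_n/n!|$ attains its supremum — a countable intersection that need not stay dense without a countability assumption on $k$. The device that avoids this is precisely the reduction, via iterated Frobenius antecedents, to the Christol--Dwork regime, where the spectral norm is expressed through the \emph{finitely many} base functions $a_i$, so that the equality locus becomes a finite condition and hence a genuine Zariski-dense open.
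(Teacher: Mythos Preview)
Your approach is essentially the paper's: the same split into $b=0$ versus $b>0$, the same use of iterated Frobenius antecedent (Proposition~\ref{frobantrel}) to push into the regime $R<p^{-1/(p-1)}\sigma$ where the Christol--Dwork formula determines the spectral norm, the same cyclic-vector trick to replace an infinite $\varliminf$ by finitely many coefficients $a_i$, and the same spreading-out over a Zariski-dense open where those finitely many $|a_i|_\rho$ attain their suprema (the paper packages this as Lemma~\ref{tech}, which also shrinks the \emph{interval}, a point you do not make explicit). The paper works over a whole closed subinterval $I'$ at once rather than at two isolated radii, but this is only organizational.

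Two small points to repair. First, your case split ``$b=0$'' versus ``$b\geq 1$'' leaves $0<b<1$ uncovered; the break lies in $(\mu!)^{-1}\Z$, not in $\Z$. Everything you write for ``$b\geq 1$'' works verbatim for any $b>0$, so this is only a slip. Second, your argument for $(2)\Rightarrow(1)$ has a circularity: you pick $\rho_1<\rho_2$ first, then $y\in U\cap U_{\rho_1}\cap U_{\rho_2}$, and then assert $R(E_{\ti{y}},\rho_i)=\rho_i^{b+1}$ because $E_{\ti{y}}$ has break $b$; but Corollary~\ref{elementary}(1)(c) only guarantees this for $\rho$ above a threshold depending on $y$, which in turn depended on the $\rho_i$. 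The paper sidesteps this by running the argument in the other direction: $E_L$ is solvable by $(*)$, so it has some break $b'$; apply $(1)\Rightarrow(2)$ (which you have already proved) to get a dense open $U'$ on which every $E_{\ti{y}}$ has break $b'$; any separable point of $U\cap U'$ then witnesses $b=b'$. Equivalently, in your own framework, transport from $E_L$ to $E_{\ti{y}}$ rather than the reverse: choose $\rho_1<\rho_2$ with $R(E_L,\rho_i)=\rho_i^{b'+1}$, use Step~2 to get $R(E_{\ti{y}},\rho_i)=\rho_i^{b'+1}$, and conclude from Corollary~\ref{elementary}(2) that $E_{\ti{y}}$ has break $b'$, hence $b'=b$.
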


\begin{rem}\label{solrem}
The condition $(*)$ is satisfied when $E$ is the $\nabla$-module induced 
from an overconvergent isocrystal on $(X,\ol{X})/K$, by Proposition 
\ref{ocsolv}. 
\end{rem}

\begin{rem}\label{lrem}
The solvability and the highest ramification break for $E_L$ is 
independent of the choice of $L$ in Hypothesis \ref{hyp}: 
If we denote the completion of ${\rm Frac}\,\Gamma(\cZ_K,\cO_{\cZ_K})$ 
with respect to the supremum norm by $L_0$, the inclusion 
$\Gamma(\cZ_K,\cO_{\cZ_K}) \allowbreak \hra L$ factors as 
$\Gamma(\cZ_K,\cO_{\cZ_K}) \hra L_0 \hra L$ and the norms of 
$L_0$ and $L$ are compatible. Hence, by definition, the solvability 
of $E_L$ is equivalent to that of $E_{L_0}$ and the highest 
ramification break of $E_L$ is equal to that of $E_{L_0}$. 
\end{rem}

Before giving the proof, we first prove a technical lemma. 

\begin{lem}\label{tech}
Let $K'$ be a field containing $K$ which is complete with respect to 
a norm which extends the given norm on $K$ and let 
$O_{K'}$ be the ring of integers of $K'$. 
Let $I \subseteq (0,1)$ be a closed aligned interval of positive length 
and let $a = \sum_{n \in \Z} a_nt^n$ be a non-zero element of 
$\Gamma(\cZ_{K'} \times A^1_{K'}(I),\cO)$. 
$($Here $\cZ_{K'}$ denotes the rigid analytic space over $K'$ 
associated to $\cZ \widehat{\otimes}_{O_K} O_{K'}$.$)$ 
Then there exist 
an open dense sub affine formal scheme $\cU \subseteq \cZ$ and 
a closed aligned subinterval $I' \subseteq I$ of positive length 
satisfying the following conditions$:$ 
\begin{enumerate}
\item 
$a \in \Gamma(\cU_{K'} \times A^1_{K'}(I'),\cO^{\times})$. 
\item 
For any $u \in \cU_{K'}$ and $\rho \in I'$, we have 
$|a(u)|_{\rho} = |a|_{\rho}$, where 
$a(u) := \sum_{n\in \Z} a_n(u)t^n \allowbreak 
\in \Gamma(u \times A^1_{K'}(I'),\cO)$ and 
$|\cdot|_{\rho}$ denotes the $\rho$-Gauss norm. 
\end{enumerate}
\end{lem}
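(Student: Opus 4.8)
The plan is to reduce both assertions to a statement about a single ``dominant'' Laurent coefficient of $a$, and then to produce $\cU$ by descending to the special fibre of $\cZ$. Write $a = \sum_n a_n t^n$ with $a_n \in \Gamma(\cZ_{K'},\cO)$ and set $s_n := |a_n|_{\sup}$, the supremum norm on $\cZ_{K'}$, so that $|a|_\rho = \max_n s_n \rho^n$. Since $a$ converges on $A^1_{K'}(I)$, the coefficients satisfy $s_n\rho^n \to 0$ as $|n|\to\infty$ for $\rho$ in the interior of $I$; hence the convex piecewise-affine function $r \mapsto \log\max_n s_n e^{-nr}$ has only finitely many breakpoints over the compact set $-\log I$.

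First I would shrink $I$ to a closed aligned subinterval $I' \subseteq I$ of positive length, with endpoints in $\Gamma^*$, lying in the interior of a single affine piece and containing no breakpoint. Then there is a single index $n_0$, with $a_{n_0}\neq 0$ because $a\neq 0$, and a constant $\theta < 1$ such that
\[ s_n \rho^n \le \theta\, s_{n_0}\rho^{n_0} \qquad (n \neq n_0,\ \rho \in I'). \]
This reduces everything to finding a dense open $\cU \subseteq \cZ$ with $|a_{n_0}(u)| = s_{n_0}$ for every $u \in \cU_{K'}$. Indeed, granting such a $\cU$, condition (2) is immediate: the $n_0$-term of $|a(u)|_\rho$ equals $s_{n_0}\rho^{n_0} = |a|_\rho$, while every other term is $\le s_n\rho^n \le \theta\,|a|_\rho$. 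Condition (1) then follows by writing $a = a_{n_0}t^{n_0}(1+w)$ with $w := \sum_{n\neq n_0}(a_n/a_{n_0})t^{n-n_0}$; on $\cU_{K'}\times A^1_{K'}(I')$ one has $|w|_\rho \le \theta < 1$, so $1+w$ is a unit, while $t^{n_0}$ is a unit on the annulus (as $0\notin I'$) and $a_{n_0}$ is invertible on $\cU_{K'}$.

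It remains to produce $\cU$, and here I would pass to the special fibre. After normalizing $a_{n_0}$ to supremum norm $1$, it becomes integral with a nonzero reduction $\bar a_{n_0}$ on the reduction $Z$ of $\cZ$. Taking $\cU$ to be the affine, dense open complement of the zero locus of $\bar a_{n_0}$ in $\cZ$, any $u \in \cU_{K'}$ reduces into the non-vanishing locus of $\bar a_{n_0}$, whence $|a_{n_0}(u)| = 1 = s_{n_0}$; the same non-vanishing of the reduction shows $a_{n_0}$ is a unit on $\cU_{K'}$, completing (1).

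The main obstacle is exactly this last step: controlling the locus where the supremum norm of $a_{n_0}$ is attained and realizing it as the $K'$-points lying over a dense open of $\cZ$ defined over $O_K$. This rests on the compatibility of the supremum norm with reduction for the smooth model $\cZ$ — that $|a_{n_0}(u)| = s_{n_0}$ is detected by non-vanishing of $\bar a_{n_0}$ at the reduction of $u$, that this non-vanishing locus is open and dense in the special fibre, and that all of this is preserved under the base change $O_K \to O_{K'}$.
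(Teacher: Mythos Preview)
Your argument is correct and follows essentially the same approach as the paper: isolate a single dominant coefficient $a_{n_0}$ on a subinterval $I'$, pass to the special fibre of $\cZ$ to carve out the dense open $\cU$ where $|a_{n_0}(u)|=|a_{n_0}|$, and then write $a=a_{n_0}t^{n_0}(1+w)$ with $|w|<1$. The only cosmetic difference is that you locate $n_0$ and $I'$ via the Newton polygon of $r\mapsto \max_n(\log s_n-nr)$, whereas the paper argues directly from the extremal indices at the endpoints of $I$; these are two packagings of the same step.
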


\begin{proof}
In this proof, $|\cdot|$ denotes the supremum norm. Let us write 
$I = [\alpha,\beta]$. By \cite[3.1.7, 3.1.8]{kedlayaI}, we have 
$$|a| = \max ( \sup_n(|a_n|\alpha^n), \sup_n(|a_n|\beta^n)) = 
\max ( \sup_{n\leq 0}(|a_n|\alpha^n), \sup_{n\geq 0}(|a_n|\beta^n)).$$ 
Let us define finite subsets 
$A \subseteq \Z_{\leq 0}$, $B \subseteq \Z_{\geq 0}$ by 
$A := \{n \leq 0\,\vert\, |a_n|\alpha^n = |a|\}, 
B := \{n \geq 0\,\vert\, |a_n|\beta^n = |a|\}.$ Then we have 
$A \cup B \not= \emptyset$. \par 
Let us first consider the case $A \not= \emptyset$. Let $n_0$ be 
the maximal element of $A$. Then, since $a_{n_0} \not= 0$, there 
exists an element $b \in K^{\times}$ such that 
$ba_{n_0} \in \Gamma(\cZ,\cO_{\cZ})$ and that the image $\ol{ba_{n_0}}$ of 
$ba_{n_0}$ in $\Gamma(Z,\cO_Z)$ is non-zero. Let $\cU \subseteq \cZ$ be the 
open dense affine sub formal scheme such that $\ol{ba_{n_0}}$ is invertible on 
$\cU \times_{\cZ} Z$. Then we have $ba_{n_0} \in 
\Gamma(\cU,\cO_{\cU}^{\times})$. So, for all $u \in\cU_{K'}$, we have 
$|a_{n_0}(u)| = |b^{-1}|$ and hence $|a_{n_0}(u)| = |a_{n_0}|$. 
(Here note that, for elements in $\Gamma(\cZ_{K'}, \cO)$, 
its supremun norm on $\cZ_{K'}$ is the same as that on $\cU_{K'}$.) 
Next we prove the following claim: \\
\quad \\
{\bf claim.} \,\, There exists a closed aligned subinterval 
$I' \subseteq I$ of positive length 
such that $|a_n|\rho^n < |a_{n_0}|\rho^{n_0}$ 
for any $n \in \Z, \not= n_0$ and $\rho \in I'$. \\
\quad \\
Let us put $C := \{n \in \Z \,\vert\, \max(|a_n|\alpha^n, |a_n|\beta^n) 
\geq |a_{n_0}|\beta^{n_0}\}$. Then $C$ is a finite set containing $A$. 
If $n \in A, \not=n_0$, we have $|a_n|\alpha^n = |a_{n_0}|\alpha^{n_0}$ 
and $n < n_0 \leq 0$. Hence we have 
$|a_n|\rho^n < |a_{n_0}|\rho^{n_0}$ for any $\rho \in (\alpha,\beta]$. 
For $n \in C - A$, we have $|a_n|\alpha^n < |a_{n_0}|\alpha^{n_0}$. 
So there exists $\beta' \in (\alpha, \beta]$ such that, for any 
$n \in C-A$ and for any 
$\rho \in [\alpha,\beta']$, we have $|a_n|\rho^n < |a_{n_0}|\rho^{n_0}$. 
For $n \not\in C$, we have, for any $\rho \in I$, the inequalities 
$$ 
|a_n|\rho^n \leq \max(|a_n|\alpha^n,|a_n|\beta^n) < 
|a_{n_0}|\beta^{n_0} \leq |a_{n_0}|\rho^{n_0}. $$
Summing up these, we see the claim. \par 
Let us put $f := \sum_{n \not= n_0}(a_n/a_{n_0})t^{n-n_0} \in 
\Gamma(\cU_{K'} \times A^1_{K'}(I'),\cO)$ and 
take any $u\in \cU_{K'}$, 
$\rho \in I'$. Then we have 
$$ |f(u)|_{\rho} \leq 
\dfrac{\sup_{n\not=n_0}(|a_n|\rho^n)}{|a_{n_0}(u)|\rho^{n_0}} = 
\dfrac{\sup_{n\not=n_0}(|a_n|\rho^n)}{|a_{n_0}|\rho^{n_0}} <1. $$
So we have $|f| <1$. So we have 
$a = a_{n_0}t^{n_0}(1+f) \in \Gamma(\cU_{K'} \times 
A^1_{K'}(I'),\cO^{\times}).$ 
Moreover, for any $u\in \cU_K$ and $\rho \in I'$, 
we have 
$$ 
|a(u)|_{\rho} = \sup_{n}(|a_n(u)|\rho^n) = |a_{n_0}(u)|\rho^{n_0} 
= |a_{n_0}|\rho^{n_0}
= \sup_{n}(|a_n|\rho^n) = |a|_{\rho}. $$
We can prove the lemma in the case $B\not= \emptyset$ in the same way. 
(In this case, we define $n_0$ to be the minimal element of $B$.) 
So we are done. 
\end{proof}

Now we give a proof of Theorem \ref{thm1}. 

\begin{proof}[Proof of Theorem \ref{thm1}]
First we prove (2)$\,\Longrightarrow\,$(1), assuming 
(1)$\,\Longrightarrow\,$(2). Assume (2) and assume that 
$E_{L}$ has highest ramification break $b'$. Then, since we
assumed the implication (1)$\,\Longrightarrow\,$(2), 
there exists a dense open subset $U' \subseteq Y$ such that, for any 
separable closed point $y$ in $U'$ and its lift $\ti{y}$ in $\cY$, 
$E_{\ti{y}}$ has 
highest ramification break $b'$. Since $U \cap U'$ contains 
a separable closed point, this implies the equality $b=b'$ as desired. 
So it suffices to prove (1)$\,\Longrightarrow\,$(2). We will prove this. \par 
For a separable closed point $x$ in $Z$, we define 
a lift of $x$ in $\cZ$ 
as a closed sub formal scheme $\ti{x} \hra \cZ$ etale over 
$\Spf O_K$ such that $x = \ti{x} \times_{\cZ} Z$. Then, to prove the 
assertion, it suffices to prove that there exists an open dense subscheme 
$V \subseteq Z$ such that, for any separable closed point $x$ and its lift 
$\ti{x}$ in $\cZ$, the restriction $E_{\ti{x}}$ of $E$ to 
$\ti{x}_K \times A^1_K[\lam,1)$ has highest ramification break $b$. 
Indeed, if this is true, we have the assertion if we put 
$U := Y - \ol{\phi (Z-V)}$. So we will prove this claim. \par 
First we prove the above claim in the case 
$b=0$. We may assume that $R(E_{\cE,L},\rho)=\rho$ for any 
$\rho \in [\lam,1)$. 
Take any closed aligned subinterval $I \subseteq [\lam,1)$ 
of positive length and 
put $A := \Gamma(\cZ_K \times A_K(I),\cO)$, $\E := 
\Gamma(\cZ_K \times A_K(I),E)$. Then $A$ is an integral domain and 
$\E$ is a finitely generated $A$-module. Let 
$\e := (\e_1,...,\e_{\mu})$ be a basis of $\Frac A \otimes_A \E$ 
as $\Frac A$-vector space and let $(\f_1, ..., \f_{\nu})$ be a set 
of generator of $\E$ as $A$-module. Then there exist 
$b_{ij} := b'_{ij}/b''_{ij}, c_{ji} := c'_{ji}/c''_{ji} \in \Frac A 
\,(1 \leq i \leq \mu, 1 \leq j \leq \nu)$ such that 
$\e_i = \sum_{j=1}^{\nu} b_{ij} \f_j \,(\forall i), 
\f_j = \sum_{i=1}^{\mu} c_{ji} \e_i \,(\forall j)$. By 
Lemma \ref{tech}, there exist an open dense sub affine formal scheme 
$\cV \subseteq \cZ$ and closed aligned subinterval $I' \subseteq I$ 
of positive length 
such that $b''_{ij}, c''_{ji} \in \Gamma(\cV_K \times A^1_K(I'),
\cO^{\times})$. Then we see that $\e$ forms a basis of 
$\Gamma(\cV_K \times A^1_K(I'),E)$ as 
$\Gamma(\cV_K \times A^1_K(I'),\cO)$-module. 
Then let us put $V := \cV \times_{\cZ} Z$ and 
let $\pa: E \lra E$ 
be a morphism on $\cV_K \times A^1_K(I')$ defined as the composite 
$$ E \os{\nabla}{\lra} E \otimes 
\Omega^1_{\cV_K \times A^1_K(I')/\cV_K} = 
E dt \os{=}{\lra} E, $$
where the last 
map is defined by $xdt \mapsto x$. For $n \in \N$, let $G_n$ be 
the matrix expression of $\pa^n$ with respect to the basis $\e$ 
(so we have $G_n \in \Mat_{\mu}(\Gamma(\cV_K \times A^1_K(I'),\cO)) 
\subseteq \Mat_{\mu}(\Gamma(A^1_L(I'),\cO))$). \par 
For any separable closed point $x \in V$ and its lift $\ti{x}$ in $\cZ$, 
$E$ restricts to a $\nabla$-module $E_{\ti{x}}$ on 
$\ti{x}_K \times A^1_K(I')$, $\pa$ restricts to the morphism 
$\pa_{\ti{x}}:E_{\ti{x}} \lra E_{\ti{x}}$ induced from the 
$\nabla$-module structure of $E_{\ti{x}}$ and the matrix 
$G(\ti{x}_K) \in \Mat_{\mu}(\Gamma(\ti{x}_K \times A^1_K(I'),\cO))$ 
gives the matrix expression of $\pa_{\ti{x}}$ with respect to the basis 
$\e$. By definition of the supremum norm and \cite[3.1.7, 3.1.8]{kedlayaI}, 
we have the inequality $|G_n(\ti{x}_K)|_{\rho} \leq |G_n|_{\rho}$ for any 
$\rho \in I' \cap \Gamma^*$. Hence we have 
\begin{align*}
\rho \geq R(E_{\ti{x}},\rho) & = \min (\rho, 
\varliminf_{n\to\infty}|G_n(\ti{x}_K)/n!|_{\rho}^{-1/n}) \\ 
& \geq 
\min (\rho, 
\varliminf_{n\to\infty}|G_n/n!|_{\rho}^{-1/n}) \os{(*)}{=} 
R(E_{L},\rho) = \rho, 
\end{align*}
that is, $R(E_{\ti{x}},\rho)=\rho$. (As for $(*)$, note that 
the map $\Gamma(\cZ_K \times A_K^1(I), \cO) \lra L(t)_{\rho}$ factors 
through $\Gamma(\cV_K \times A^1_K(I'),\cO)$.) 
Since this is true for any 
$\rho \in I' \cap \Gamma^*$, we see that the highest ramification 
break of $E_{\ti{x}}$ is equal to $0$, by Corollary \ref{elementary} (2). 
So we have proved the desired claim in the case 
$b=0$. \par 
Next we treat the case $b>0$. 
(The proof in this case partly follows 
the argument in \cite[1.3.1]{kedlayaswanII}.) 
Let $\zeta$ be a primitive $p$-th root of unity. Let us put 
$K' := K(\zeta)$, let $O_{K'}$ be the ring of integers of $K'$ and 
let $\cZ_{K'}$ be the rigid space over $K'$ associated to 
$\cZ \widehat{\otimes}_{O_K} O_{K'}$. Also, let us take an 
inclusion $\Gamma(\cZ_{K'},\cO) \hra L'$, where $L'$ is a 
field complete with respect to a norm which restricts to 
the supremum norm on $\cZ_{K'}$. Since the supremum norm on 
$\cZ_{K'}$ is compatible with that on $\cZ_K$, the restriction 
of the norm on $L'$ to $\Gamma(\cZ_K,\cO)$ gives the supremum norm 
on $\cZ_K$. Hence, by Remark \ref{lrem}, the $\nabla$-module $E_{L'}$ 
on $A_{L'}[\lam,1)$ induced by $E$ has solvable and it has 
highest ramification break $b$. So we may assume that 
$R(E_{L'},\rho)=\rho^{b+1}$ for any 
$\rho \in [\lam,1)$. Then there exists a closed aligned 
subinterval $I \subseteq [\lam,1)$ of positive length such that 
there exists a positive integer $m$ with 
$$ p^{-1/p^{m-1}(p-1)}\rho < R(E_{L'},\rho) < p^{-1/p^m(p-1)}\rho \,\,\,\, 
(\forall \rho \in I). $$
Let $F$ be a $\nabla$-module on $\cZ_{K'} \times A^1_{K'}(I^{p^m})$ 
with $\varphi^{m*}F = E_{K'}$ (where $E_{K'}$ denotes 
the restriction of $E$ to 
$\cZ_{K'} \times A^1_{K'}(I)$ and $\varphi$ denotes the morphism 
$\cZ_{K'} \times A^1_{K'}(I) \lra \cZ_{K'} \times A^1_{K'}(I^p)$ over 
$\cZ_{K'}$ defined by $t \mapsto t^p$) 
such that the induced $\nabla$-module $F_{L'}$ on $A^1_{L'}(I^{p^m})$ is 
the $m$-fold Frobenius antecedent of $E_{L'}$. 
(The existence of such $F$ is assured by Proposition \ref{frobantrel}.) 
Then we have 
$R(F_{L'},\rho^{p^m}) = R(E_{L'},\rho)^{p^m}$. Hence we have the inequality 
$p^{-p/(p-1)} \rho^{p^m} 
< R(F_{L'},\rho^{p^m}) < p^{-1/(p-1)}\rho^{p^m} \, (\rho \in I)$. 
This implies the inequality 
\begin{equation}\label{ineq}
|\pa|_{F_{L',\rho^{p^m}},\sp} > \rho^{-p^m} = 
|\pa|_{L'(t)_{\rho^{p^m}}}. 
\end{equation}
Let us  
put $A := \Gamma(\cZ_{K'} \times A_{K'}^1(I^{p^m}),\cO)$, $\F := 
\Gamma(\cZ_{K'} \times A_{K'}(I),F)$. Then we can define 
$\pa:F \lra F$ on $\cZ_{K'} \times A_{K'}^1(I^{p^m})$ as before and 
it induces the map $\pa:\F \lra \F$. 
Let 
$\v$ be a cyclic vector of $\Frac A \otimes_A \F$, put 
$\pa^{\mu}(\v) = \sum_{i=0}^{\mu-1}a_i\pa^1(\v)$ (where $\mu$ is the rank of 
$F$) with $a_i = a'_i/a''_i \in \Frac A$ 
and let $(\f_1, ..., \f_{\nu})$ be a set 
of generators of $\F$ as $A$-module. Then there exist 
$b_{ij} := b'_{ij}/b''_{ij}, c_{ji} := c'_{ji}/c''_{ji} \in \Frac A 
\,(0 \leq i \leq \mu -1, 1 \allowbreak \leq j \leq \nu)$ such that 
$\pa^i(\v) = \sum_{j=1}^{\nu} b_{ij} \f_j \,(\forall i), 
\f_j = \sum_{i=0}^{\mu -1} c_{ji} \pa^i (\v) \,(\forall j)$. By 
Lemma \ref{tech}, there exists an open dense sub affine formal scheme 
$\cV \subseteq \cZ$ and a closed aligned subinterval $I' \subseteq I$ 
of positive length such that $a''_i, 
b''_{ij}, c''_{ji} \in \Gamma(\cV_{K'} \times A^1_{K'}({I'}^{p^m}),
\cO^{\times})$ (where $\cV_{K'}$ is defined in the same way as $\cZ_{K'}$, by 
using $\cV$ instead of $\cZ$). 
Then we see that $\v,\pa(\v),...,\pa^{\mu-1}(\v)$ 
form a basis of 
$\Gamma(\cV_{K'} \times A^1_{K'}({I'}^{p^m}),F)$ as 
$\Gamma(\cV_{K'} \times A^1_{K'}({I'}^{p^m}),\cO)$-module. 
By shrinking $\cV$ and $I'$ further and using Lemma \ref{tech} again, 
we may assume that 
$|a_i(u)|_{\rho^{p^m}} = |a_i|_{\rho^{p^m}}$ for any $u \in 
\cV_{K'}$ and $\rho \in I'$. \par 
Now let us put $V := \cV \times_{\cZ} Z$, 
take any separable closed point $x \in V$, its lift $\ti{x}$ in 
$\cV$, a point $\ti{x}_{K'} \in \cV_{K'}$ which lies above 
$\ti{x}_K \in \cV_K$ and any $\rho \in I'$. 
Let $r$ be the least slope of the lower convex hull of 
the set $\{(-i,\log |a_i|_{\rho^{p^m}}) \,\vert\, 0 \leq i \leq \mu -1\} = 
\{(-i,\log |a_i(\ti{x}_{K'})|_{\rho^{p^m}}) \,\vert\, 0 \leq i \leq \mu -1 \}
\subseteq \R^2$. Then we have 
$$ \max(|\pa|_{F_{L',\rho^{p^m}},\sp}, 
|\pa|_{L'(t)_{\rho^{p^m}}}) = 
\max(|\pa|_{F_{\ti{x},\rho^{p^m}},\sp}, 
|\pa|_{K'(t)_{\rho^{p^m}}}) = e^{-r} $$ 
(where $F_{\ti{x}}$ is the $\nabla$-module on 
$\ti{x}_{K'} \times A^1_{K'}({I'}^{p^m})$ induced by $F$) 
and by \eqref{ineq}, this implies the equality  
$|\pa|_{F_{L',\rho^{p^m}},\sp} = e^{-r} = 
|\pa|_{F_{\ti{x},\rho^{p^m}},\sp}$. Hence we have 
$R(F_{L'},\rho^{p^m}) = R(F_{\ti{x}},\rho^{p^m})$. 
In particular, we have  
$R(F_{\ti{x}},\rho^{p^m}) > p^{-p/(p-1)}\rho^{p^m}$. 
Hence $F_{\ti{x}}$ is the $m$-fold Frobenius antecedent of 
${\varphi^m}^*F_{\ti{x}} = E_{\ti{x},K'}$, where 
$E_{\ti{x},K'}$ is the restriction of $E_{\ti{x}}$ to 
$\ti{x}_{K'} \times A^1_{K'}(I')$. Noting the equality 
$R(E_{\ti{x}},\rho) = R(E_{\ti{x},K'},\rho)$, 
we obtain the equalities 
$$ 
R(E_{\ti{x}},\rho) = 
R(E_{\ti{x},K'},\rho) = 
R(F_{\ti{x}},\rho^{p^m})^{p^{-m}} = 
R(F_{L'},\rho^{p^m})^{p^{-m}} = R(E_{L'},\rho) = \rho^{b+1}. $$ 
Since this is true for any $\rho \in I'$, we can conclude that 
the highest ramification break of $E_{\ti{x}}$ is $b$ by 
Corollary \ref{elementary} (2). So we 
have finished the proof of the theorem. 
\end{proof}

Even if we do not assume the solvability assumption $(*)$, we still 
have the following criterion for solvability and highest ramification 
break: 

\begin{thm}\label{thm1bis}
Let the notations be as in the beginning of this subsection and 
assume that $k$ is uncountable. 
Then the following are equivalent$:$ 
\begin{enumerate}
\item 
$E_{L}$ is solvable with highest ramification break $b$ 
$($resp. not solvable$)$. 
\item 
There exists a decreasing sequence of dense open subsets 
$\{U_i\}_{i\in \N}$ in $Y$ such that, for any separable 
closed point $y$ in $\bigcap_{i\in \N}U_i$ and its lift $\ti{y}$ in 
$\cY$, $E_{\ti{y}}$ has 
highest ramification break $b$ 
$($resp. not solvable$)$ on any connected component of 
$\cZ_{\ti{y},K} \times A^1_K[\lam,1)$. 
\end{enumerate}
\end{thm}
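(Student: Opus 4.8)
The plan is to isolate from the proof of Theorem \ref{thm1} its purely local content---the comparison of generic radii of convergence on a single short interval---and to promote it to a statement that makes no reference to solvability, and then to feed it into the sequence-type characterizations of solvability and of the highest ramification break recorded in Corollary \ref{elementary}. The one genuinely new feature compared with Theorem \ref{thm1} is that, lacking the hypothesis $(*)$, one can no longer work on a single fixed interval: one must compare $E_L$ with the various $E_{\ti x}$ along a whole sequence of radii tending to $1$, which forces a countable family of dense open subsets, and to run the comparison backwards one must locate a separable closed point lying in all of them simultaneously. This is exactly where the uncountability of $k$ is used.

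The heart of the argument is the following local transfer statement, which I would prove first: for every closed aligned subinterval $I \subseteq [\lam,1)$ of positive length there exist a dense open affine sub formal scheme $\cV \subseteq \cZ$ and a closed aligned subinterval $I' \subseteq I$ of positive length such that, for every separable closed point $x$ of $V := \cV \times_{\cZ} Z$ and every lift $\ti x$ of $x$ in $\cZ$, one has $R(E_{\ti x},\rho) = R(E_L,\rho)$ for all $\rho \in I' \cap \Gamma^*$. To prove it I would first shrink $I$ so that, by Proposition \ref{f}, the function $r \mapsto -\log R(E_L,e^{-r})$ is affine on $-\log I$, and then split into three regimes according to the position of $R(E_L,\rho)$ relative to the Frobenius threshold $p^{-1/(p-1)}\rho$. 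If $R(E_L,\rho)=\rho$ on $I$, I would argue exactly as in the case $b=0$ of Theorem \ref{thm1}, choosing a basis via Lemma \ref{tech} and using $|G_n(\ti x)|_\rho \le |G_n|_\rho$ to get $R(E_{\ti x},\rho)\ge R(E_L,\rho)=\rho$, whence equality. If $R(E_L,\rho) < p^{-1/(p-1)}\rho$ on $I$, then $|\pa|_{E_{L,\rho},\sp}$ strictly exceeds the Gauss norm $|\pa|_{L(t)_\rho}$, so the Christol--Dwork formula \cite[Theorem 1.5]{cd} reads the spectral norm directly off the slopes attached to a cyclic vector; arranging $|a_i(\ti x)|_\rho = |a_i|_\rho$ by Lemma \ref{tech} transfers these slopes verbatim to $E_{\ti x}$, giving equality of spectral norms and hence of radii. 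In the intermediate regime $p^{-1/(p-1)}\rho \le R(E_L,\rho) < \rho$, I would pass to $K' = K(\zeta)$ (using Remark \ref{lrem}), choose on a short enough $I$ the unique integer $m$ with $p^{-1/(p^{m-1}(p-1))}\rho < R(E_L,\rho) < p^{-1/(p^{m}(p-1))}\rho$, form the $m$-fold Frobenius antecedent $F$ relative to $\cZ_{K'}$ by Proposition \ref{frobantrel}, and apply the preceding regime to $F$, whose radius now lies below its own threshold; the relation $R(F_L,\rho^{p^m}) = R(E_L,\rho)^{p^m}$ of Proposition-Definition \ref{frobant}, together with the fact that $F_{\ti x}$ is again the Frobenius antecedent of $E_{\ti x}$, returns $R(E_{\ti x},\rho) = R(E_L,\rho)$. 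This regime is essentially the $b>0$ computation of Theorem \ref{thm1}, localized to a single affine piece.

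Granting the transfer statement, the implication $(1)\Rightarrow(2)$ needs no uncountability. In the solvable case, Corollary \ref{elementary}(1) gives $\lam' \in [\lam,1)$ with $R(E_L,\rho)=\rho^{b+1}$ on $[\lam',1)$; applying the transfer statement to an exhaustion $I_1 \subseteq I_2 \subseteq \cdots$ of $[\lam',1)$ and setting $U_i := \bigcap_{j\le i}\bigl(Y - \ol{\phi(Z - \cV_j\times_{\cZ}Z)}\bigr)$ yields a decreasing family of dense opens such that, for $y \in \bigcap_i U_i$ and every $x$ over $y$, $R(E_{\ti x},\rho)=\rho^{b+1}$ holds along radii accumulating at $1$; Corollary \ref{elementary}(1) then gives highest ramification break $b$ for $E_{\ti x}$, i.e.\ on every connected component. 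In the non-solvable case I would instead take, via Corollary \ref{elementary}(3), a sequence $\rho_m \to 1$ with $\varlimsup_m R(E_L,\rho_m) < 1$; discarding finitely many terms we may assume $R(E_L,\rho_m)\le c<1$ for all $m$. Choosing short intervals $I_m \ni \rho_m$ and running the same construction, the transfer statement propagates the bound $R(E_{\ti x},\rho_m)=R(E_L,\rho_m)\le c<1$ to the curves, so $E_{\ti x}$ is non-solvable by Corollary \ref{elementary}(3).

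For $(2)\Rightarrow(1)$ I would take the given decreasing family $\{U_i\}$ and intersect it with the decreasing family $\{U'_i\}$ manufactured, exactly as above, from the transfer statement applied to an exhaustion of $[\lam,1)$ by short intervals $I'_i$ whose endpoints tend to $1$. Assuming $\dim Y \ge 1$ (the case $\dim Y = 0$ being trivial, since then $E_{\ti y}=E$), the sets $Y \setminus (U_i \cap U'_i)$ are proper closed subsets of $Y$, and since $k$ is uncountable a countable union of proper closed subsets of $Y$ cannot contain every separable closed point; hence there is a separable closed point $y \in \bigcap_i (U_i \cap U'_i)$ with a lift $\ti y$. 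For this $y$, hypothesis $(2)$ says each $E_{\ti x}$ ($x$ over $y$) has highest ramification break $b$ (resp.\ is non-solvable), while membership in the $U'_i$ gives $R(E_L,\rho)=R(E_{\ti x},\rho)$ along radii $\rho'_i \in I'_i$ tending to $1$; reading this through Corollary \ref{elementary}(1) (resp.\ (3)) forces the corresponding conclusion for $E_L$. The step I expect to be the main obstacle is the transfer statement in the intermediate Frobenius regime: one must check that $m$ can be held constant on a subinterval of genuinely positive length---using that $r\mapsto-\log R(E_L,e^{-r})$ is piecewise affine and avoiding the finitely many slope breakpoints as well as the threshold line---and that descent and re-ascent of the Frobenius antecedent commute with restriction to the lifts $\ti x$. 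The extraction of a separable closed point from a countable intersection of dense opens is the second delicate point, and is the sole place where the uncountability of $k$ intervenes.
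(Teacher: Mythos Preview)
Your proposal is correct, but it takes a noticeably different route from the paper's own argument, and the difference is worth recording.

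The paper does not use the Frobenius antecedent or the cyclic-vector computation at all in the proof of Theorem \ref{thm1bis}. Instead, having chosen a basis $\e$ on $\cV'_{m,K}\times A^1_K(I'_m)$ as in the $b=0$ step of Theorem \ref{thm1}, it applies Lemma \ref{tech} \emph{entrywise} to every matrix $G_{m,n}$, $n\in\N$: this produces decreasing sequences $\{V'_{m,n}\}_n$ and $\{I'_{m,n}\}_n$ on which $|G_{m,n'}(\ti x_K)|_\rho=|G_{m,n'}|_\rho$ exactly for all $n'\le n$. Picking a single $\rho_m\in\bigcap_n I'_{m,n}$ then gives, for any $x\in\bigcap_n V'_{m,n}$, the equality $|G_{m,n}(\ti x_K)|_{\rho_m}=|G_{m,n}|_{\rho_m}$ for \emph{all} $n$, and hence $R(E_{\ti x},\rho_m)=R(E_L,\rho_m)$ directly from the $\varliminf$ definition of $R$. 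No regime splitting is needed, because one is comparing the full sequence of Gauss norms rather than the spectral norm. The countable family $\{V_i\}$ is then obtained by diagonalizing over $m$ and $n$; Corollary \ref{elementary}(1),(3) finishes as you describe. For $(2)\Rightarrow(1)$ the paper simply bootstraps: letting $b'$ (possibly $+\infty$) denote the actual highest ramification break of $E_L$, it invokes the already-proved $(1)\Rightarrow(2)$ to manufacture a second family $\{U'_i\}$, and uncountability of $k$ yields a separable closed point in $\bigcap_i U_i\cap\bigcap_i U'_i$ forcing $b'=b$.

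Your alternative packages the comparison into a clean transfer lemma---$R(E_{\ti x},\rho)=R(E_L,\rho)$ on a whole subinterval---at the cost of importing the three-regime Frobenius analysis from Theorem \ref{thm1}. This is sound (the boundary cases $R(E_L,\rho)=p^{-1/(p^m(p-1))}\rho$ you flag are indeed handled by shrinking $I$ using piecewise affineness), and it yields a stronger intermediate statement that could be reused elsewhere. The paper's approach is shorter and more elementary here precisely because it is content with equality at a single $\rho_m$ per interval, which the raw Gauss-norm trick delivers without any structural input about $\nabla$-modules beyond the definition of $R$.
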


Until the end of this subsection, we will sometimes 
say by abuse of terminology 
that $E_L$ or $E_{\ti{y}}$ has highest ramification break 
$+\infty$ if it is not solvable. 

\begin{proof}
The proof is similar to the proof of Theorem \ref{thm1}. 
First we prove (2)$\,\Longrightarrow\,$(1), assuming 
(1)$\,\Longrightarrow\,$(2). Assume (2) and assume that 
$E_{L}$ has highest ramification break $b'$. Then, since we
assumed the implication (1)$\,\Longrightarrow\,$(2), 
there exists a decreasing sequence $\{U'_i\}_{i \in \N}$ 
of dense open subsets in $Y$ such that, for any 
separable closed point $y$ in $\bigcap_{i\in \N}U'$ 
and its lift $\ti{y}$ in $\cY$, $E_{\ti{y}}$ has 
highest ramification break $b'$. Since $\bigcap_{i\in \N}U \cap 
\bigcap_{i\in \N}U'$ contains 
a separable closed point (because $k$ is uncountable), 
this implies the equality $b=b'$ as desired. 
So it suffices to prove (1)$\,\Longrightarrow\,$(2). \par 
Moreover, 
to prove the assertion, 
it suffices to prove that there exists a decreasing 
sequence $\{V_i\}_{i\in \N}$ of open dense subschemes in 
$Z$ such that, for any separable closed point $x$ and its lift 
$\ti{x}$ in $\cZ$, the restriction $E_{\ti{x}}$ of $E$ to 
$\ti{x}_K \times A^1_K[\lam,1)$ has highest ramification break $b$. 
Indeed, if this is true, we have the assertion if we put 
$U_i := Y - \ol{\phi (Z-V_i)}$. So we will prove this claim. \par 
Let us take a closed aligned intervals $I_m = [\alpha_m,\beta_m] 
\subseteq [\lam,1) \, (m \in \N)$ such that $\alpha_m < \beta_m < 
\alpha_{m+1}\,(\forall m)$ and that $\dlim_{m\to\infty}\alpha_m=1$. 
Then, by the argument of the proof of Theorem \ref{thm1} (in the case $b=0$), 
we see that, for each $m$, there exists a closed aligned subinterval 
$I'_m \subseteq I_m$ of positive length 
and an open dense sub affine formal scheme $\cV'_m \subseteq \cZ$ 
such that $E$ admits a free basis $\e := (\e_1,...,\e_{\mu})$ on 
$\cV'_{m,K} \times A^1_K(I'_m)$. Then, for each $m$, we can define 
the matrices $G_{m,n} \in \Mat_{\mu}(\Gamma(\cV'_{m,K} \times A^1_K(I'_m))) \, 
(n\in \N)$ as in the proof of Theorem \ref{thm1}, by using the basis $\e$. 
Let us put $V'_m := \cV'_m \times_{\cZ} Z$. Then there exists a decreasing 
sequence $\{V'_{m,n}\}_{n\in\N}$ of dense open subschemes in $V'_m$ and 
a decreasing sequence $\{I'_{m,n}\}_{n\in \N}$ 
of closed aligned subintervals of $I'_m$ of positive length 
such that, for any 
separable closed point $x$ in $V'_{m,n}$, any lift $\ti{x}$ of $x$ in 
$\cZ$, any $n' \leq n$ and any $\rho \in I'_{m,n}$, 
we have the equality $|G_{m,n'}(\ti{x}_K)|_{\rho}=|G_{m,n'}|_{\rho}$. 
Now, for each $m$, fix $\rho_m \in \bigcap_{n\in\N}I'_{m,n}$ and 
for $i\in \N$, let us put $V_i:=\bigcap_{m+n\leq i}V_i$. 
Then, for any separable closed point $x$ in $\bigcap_{i\in\N}V_i$ and 
any lift $\ti{x}$ of $x$ in $\cZ$, we have the equality 
$|G_{m,n}(\ti{x}_K)|_{\rho_m} = |G_{m,n}|_{\rho_m}$ 
for any $m,n$. Hence we have 
\begin{align*}
R(E_{\ti{x}},\rho_m) & = \min (\rho_m, 
\varliminf_{n\to\infty}|G_{m,n}(\ti{x}_K)/n!|_{\rho_m}^{-1/n}) \\ 
& = 
\min (\rho_m, 
\varliminf_{n\to\infty}|G_{m,n}/n!|_{\rho_m}^{-1/n}) = 
R(E_{L},\rho_m) 
\end{align*}
for any $m$. If $E_L$ has highest ramification break $b$ ($0\leq b<+\infty$), 
we have $R(E_{L},\rho_m)\allowbreak
=\rho_m^{b+1}$ for $m$ sufficiently large, by 
Corollary \ref{elementary} (1). So we have 
$R(E_{\ti{x}},\rho_m)=\rho_m^{b+1}$ for $m$ sufficiently large and hence 
$E_{\ti{x}}$ has also highest ramification break $b$, again by 
Corollary \ref{elementary} (1). If $E_L$ is not solvable, 
we have $\displaystyle\varlimsup_{m\to\infty} R(E_{L},\rho_m) < 1$ by 
Corollary \ref{elementary} (3). So we have 
$\displaystyle\varlimsup_{m\to\infty} R(E_{\ti{x}},\rho_m) < 1$ and hence 
$E_{\ti{x}}$ is not solvable either, again by 
Corollary \ref{elementary} (3). Hence 
we have proved the desired assertion 
and so the proof of the theorem is finished. 
\end{proof}

Next we prove a kind of cut-by-curves criterion on exponent. 

\begin{thm}\label{thm2}
Let the situation be as in the beginning of this 
subsection and assume moreover 
that $k$ is uncountable. Then the following are equivalent$:$ 
\begin{enumerate}
\item 
$E_{L}$ is solvable with 
highest ramification break $0$ and $\Exp(E_{L})= C$. 
\item 
There exists a decreasing sequence of dense open subsets 
$\{U_i\}_{i \in \N}$ in $Y$ such that, for any 
separable closed point $y$ in $\bigcap_{i \in \N} U_i$ 
and its lift $\ti{y}$ in $\cY$, 
$E_{\ti{y}}$ is solvable with 
highest ramification break $0$ and $\Exp(E_{\ti{y}})= C$ 
$($on any connected component of 
$\cZ_{\ti{y},K} \times A^1_K[\lam,1).)$ 
\end{enumerate}
\end{thm}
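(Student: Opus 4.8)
The plan is to follow the same architecture as the proofs of Theorems \ref{thm1} and \ref{thm1bis}, reducing the assertion to a statement over $Z$ and controlling the relevant $\rho$-Gauss norms by repeated use of Lemma \ref{tech}. As before, I would first prove (2)$\,\Longrightarrow\,$(1) assuming (1)$\,\Longrightarrow\,$(2): condition (2) in particular says that $E_{\ti{y}}$ is solvable with break $0$ for $y$ in the intersection, which is condition (2) of Theorem \ref{thm1bis} with $b=0$, so Theorem \ref{thm1bis} forces $E_L$ to be solvable with highest ramification break $0$; thus $\Exp(E_L)$ is defined, say equal to $C'$. Applying (1)$\,\Longrightarrow\,$(2) to $E_L$ produces a second decreasing sequence of dense opens, and since $k$ is uncountable the two intersections share a separable closed point $y$, at which $E_{\ti{y}}$ has exponent both $C$ and $C'$; hence $C=C'$ and (1) follows. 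The essential direction is therefore (1)$\,\Longrightarrow\,$(2), and as in the earlier proofs it suffices to produce a decreasing sequence $\{V_i\}_{i\in\N}$ of dense open subschemes of $Z$ so that $E_{\ti{x}}$ is solvable with break $0$ and $\Exp(E_{\ti{x}})=C$ for every separable closed point $x\in\bigcap_i V_i$ and every lift $\ti{x}$, after which one sets $U_i:=Y-\ol{\phi(Z-V_i)}$.

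To arrange break $0$ (equivalently the Robba condition) for $E_{\ti{x}}$, I would reuse the $b=0$ part of the proof of Theorem \ref{thm1bis}: choose a basis $\e=(\e_1,\dots,\e_\mu)$ of $E$ on $\cV_K\times A^1_K(I)$ for suitable dense open $\cV\subseteq\cZ$ and interval $I$, form the matrices $G_n$ of $\pa^n$, and apply Lemma \ref{tech} to the $G_n$ in a diagonal fashion so that $|G_n(\ti{x}_K)|_\rho=|G_n|_\rho$ is preserved; this gives $R(E_{\ti{x}},\rho)=R(E_L,\rho)=\rho$, hence the Robba condition, so that $\Exp(E_{\ti{x}})$ is defined. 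The new ingredient is to match the exponent itself, for which I would use the explicit construction recalled after Theorem-Definition \ref{exponent}. Writing the resolvent as $Y_{\e}(\zeta x,x)=\sum_n G_n(x)\,x^n(\zeta-1)^n/n!$, each matrix $S_{h,\Delta'}=p^{-h}\sum_{\zeta^{p^h}=1}\zeta^{-\Delta'}Y_{\e}(\zeta x,x)$ (for $h\in\N$, $\Delta'\in(\Z/p^h\Z)^\mu$) becomes a $K(\mu_{p^h})$-linear combination of the entries of the $G_n x^n/n!$, and therefore restricts to $E_{\ti{x}}$ exactly as the $G_n$ do; the same holds for the functions $\det(S_{h,\Delta'})$, which by the break-$0$ condition are honest elements of $\Gamma(\cV_{K'}\times A^1_{K'}(I),\cO)$.

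The key point is then that conditions (1) and (2) of Theorem-Definition \ref{exponent} are automatically inherited under restriction to $\ti{x}$: condition (1) because it is an algebraic identity among the restricted functions, and condition (2) because passing to $\ti{x}_{K'}$ can only decrease the $\rho$-Gauss norm of every coefficient, so $|S_h(\ti{x}_{K'})|_\rho\leq|S_h|_\rho\leq c_1^h$. Thus only the lower bound (3) on $|\det S_h|_{\rho_0}$ must be protected. To do this I would apply Lemma \ref{tech} to each nonzero $\det(S_{h,\Delta'})$ over the field $K'=K(\mu_{p^h})$, intersect over the finitely many $\Delta'$ for each fixed $h$, and then diagonalize over $h$ to obtain the decreasing sequence $\{V_i\}$ together with a radius $\rho_0$ lying in the intersection of all the resulting subintervals. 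Selecting the representative $\Delta\in C$ by the procedure recalled after Theorem-Definition \ref{exponent} at this $\rho_0$, the preserved norms give $|\det S_h(\ti{x}_{K'})|_{\rho_0}=|\det S_h|_{\rho_0}\geq c_2$, so $\Delta$ together with the restricted $S_h$ satisfies (1)--(3) for $E_{\ti{x}}$; since the admissible set is a single $\os{e}{\sim}$-class, $\Exp(E_{\ti{x}})$ is the class of $\Delta$, namely $C$.

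The main obstacle I anticipate is the bookkeeping forced by the growing cyclotomic fields $K(\mu_{p^h})$ and by the infinitely many pairs $(h,\Delta')$: each $S_{h,\Delta'}$ lives over a different extension, only finitely many $\Delta'$ occur for each fixed $h$, and all the countably many applications of Lemma \ref{tech} must be organized into one diagonal intersection while simultaneously pinning down a single $\rho_0$ in the nested intervals. Breaking the apparent circularity --- that the selected $\Delta$ depends on $\rho_0$ while the admissible $\rho_0$ depends on which $\det(S_{h,\Delta'})$ one controls --- is precisely why I would control \emph{all} $\Delta'$ rather than only the eventually-selected one; once this is done, the uncountability of $k$ guarantees a separable closed point in $\bigcap_i V_i$ and the argument closes.
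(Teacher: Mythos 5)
Your proposal is correct and follows essentially the same route as the paper: the same reduction of (2)$\,\Longrightarrow\,$(1) to (1)$\,\Longrightarrow\,$(2) via Theorem \ref{thm1bis} and the uncountability of $k$, the same reduction to a decreasing sequence of dense opens $V_i \subseteq Z$, the same use of the Theorem \ref{thm1bis} argument for solvability with break $0$, and the same diagonal application of Lemma \ref{tech} to the determinants $\det S_{h,\Delta_h}$ (over all $h \leq i$ and all $\Delta_h$) to fix a common $\rho_0$ and transfer the representative $\Delta \in C$ chosen at $\rho_0$ to each $E_{\ti{x}}$. The only cosmetic differences are that you work over the finite cyclotomic extensions $K(\mu_{p^h})$ where the paper uses $K_{\infty}$ uniformly, and that you verify conditions (1)--(3) of Theorem-Definition \ref{exponent} for the restricted data directly, where the paper instead transfers the non-decreasing chain of determinant norms and invokes the construction recalled in Subsection 1.4.
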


\begin{proof}
The strategy of the proof is similar to that of Theorems \ref{thm1} and 
\ref{thm1bis}. 
First we prove (2)$\,\Longrightarrow\,$(1), assuming 
(1)$\,\Longrightarrow\,$(2). Assume (2) and assume that 
$E_{L}$ has highest ramification break $b'$ (possibly $+\infty$). 
Then, by Theorem \ref{thm1bis}, 
there exists a decreasing sequence $\{U'_i\}_{i\in \N}$ of 
dense open subsets in $Y$ such that, for any 
separable closed point $y$ in $\bigcap_{i\in\N}U'_i$ and 
its lift $\ti{y}$ in $\cY$, 
$E_{\ti{y}}$ has 
highest ramification break $b'$. 
Since $\bigcap_{i \in \N}U_i \cap \bigcap_{i\in\N}U'_i$ 
contains a separable closed point (because $k$ is uncountable), 
this implies the equality $b'=0$. 
Then, since we
assumed the implication (1)$\,\Longrightarrow\,$(2), 
there exists a decreasing sequence $\{U''_i\}_{i\in\N}$ 
of dense open subsets in $Y$ 
such that, for any 
separable closed point $y$ in $\bigcap_{i \in \N}U''_i$ and its lift 
$\ti{y}$ in $\cY$, 
$\Exp(E_{\ti{y}})=\Exp(E_{L})$. Since $\bigcap_{i \in \N}U_i \cap 
\bigcap_{i \in \N}U''_i$ contains 
a separable closed point, this implies the equality $\Exp(E_{L}) = C$, 
as desired. 
So it suffices to prove (1)$\,\Longrightarrow\,$(2). Moreover, 
it suffices to prove that there exists a sequence of open dense subschemes 
$V_i \subseteq Z \,(i \in \N)$ 
such that, for any separable closed point $x$ in $\bigcap_{i\in \N}V_i$ 
and its lift 
$\ti{x}$ in $\cZ$, the restriction $E_{\ti{x}}$ of $E$ to 
$\ti{x}_K \times A^1_K[\lam,1)$ has highest ramification break $0$ 
with $\Exp(E_{\ti{x}})=C$. (Indeed, if this is true, 
we have the assertion if we put 
$U_i := Y - \ol{\phi (Z-V_i)}$.) So we will prove this claim. \par 
Let $K_{\infty}$ be the $p$-adic completion of $K(\mu_{p^{\infty}})$ and 
let $O_{K_{\infty}}$ be the valuation ring of $K_{\infty}$. 
Take a closed aligned interval $I \subset [\lam,1)$ of positive 
length such that 
$R(E_L,\rho)=\rho$ for $\rho \in I$. By 
the argument of the proof of Theorem \ref{thm1}, we have the following: 
If we shrink $\cZ$ and $I$ if necessary, $E$ admits a free basis 
$\e := (\e_1,...,\e_{\mu})$ on $\cZ_K \times A^1_K(I)$ and for 
any separable closed point $x$ in $Z$ and its lift $\ti{x}$ in $\cZ$, 
we have the equality $R(E_{\ti{x}},\rho)=\rho$ for all $\rho\in I$. 
In particular, 
for $\Delta_h \in (\Z/p^h\Z)^{\mu}$, the matrix functions 
$Y_{\e}(x,y)$ and $S_{h,\Delta_h}(x)$ 
associated to $E_{L}$ constructed 
in Subsection 1.4 is in fact defined as 
the matrix functions with coefficients in 
$\Gamma(\cZ_{K_{\infty}},\cO)$, where $\cZ_{K_{\infty}}$ denotes the 
rigid space over $K_{\infty}$ associated to $\cZ_{O_{K_{\infty}}} := 
\cZ \wh{\otimes}_{O_K} O_{K_{\infty}}$. 
In particular, 
we have $\det S_{h,\Delta_h}(x) \in \Gamma(\cZ_{K_{\infty}} \times 
A^1_{K_{\infty}}(I),\cO)$. 
By Lemma \ref{tech}, there exists a decreasing 
sequence of open dense sub affine 
formal schemes $\cV'_i \subseteq \cZ \,(i \in \N)$ 
and a decreasing sequence of aligned closed subintervals 
$I_i \subseteq I \,(i \in \N)$ such that, on any $u \in 
\cV'_{i,K_{\infty}}$, $\rho \in I_i$ and any $h \leq i, \Delta_h \in 
(\Z/p^h\Z)^{\mu}$, we have $|\det S_{h,\Delta_h}(u)|_{\rho} = 
|\det S_{h,\Delta_h}|_{\rho}$. Let us put $V'_i := \cV'_i \times_{\cZ} Z$ and 
let $\rho_0$ be any element of 
$\bigcap_{i \in \N}I_i$. Then, by definition of $\Exp(E_{L})$, 
there exists an element $\Delta \in \Exp(E_{L})=C$
such that, if we put $\Delta_h := \Delta \,{\rm mod}\, 
p^h$, we have the inequalities $|\det (S_{h,\Delta_{h}})|_{\rho_0} 
\leq |\det (S_{h+1,\Delta_{h+1}})|_{\rho_0}$ 
for any $h \in \N$. Then, for any separable closed point 
$x \in \bigcap_{i\in\N}V'_i$, its lift $\ti{x}$ in $\cZ$ 
and any point $\ti{x}_{K_{\infty}}$ of 
$\cZ_{K_{\infty}}$ lying above $\ti{x}_K \in \cZ_K$, we have 
$|\det (S_{h,\Delta_{h}}(\ti{x}_{K_{\infty}}))|_{\rho_0} 
\leq |\det (S_{h+1,\Delta_{h+1}}(\ti{x}_{K_{\infty}}))|_{\rho_0}$ 
for any $h \in \N$,. 
Also, by Theorem \ref{thm1bis}, there exists a decreasing sequence 
of open dense subschemes $V''_i \subseteq Z$ such that, for any 
separable closed point $x$ in $\bigcap_{i\in\N}V''_i$ and its lift 
$\ti{x}$ in $\cZ$, $E_{\ti{x}}$ is solvable with highest ramification 
break $0$. Let us put $V_i := V'_i \cap V''_i\,(i\in\N)$. 
Then, for any separable closed point $x$ in $\bigcap_{i\in\N}V_i$ and 
its lift $\ti{x}$ in $\cZ$, $E_{\ti{x}}$ is solvable with highest ramification 
break $0$ and its exponent $\Exp(E_{\ti{x}})$ 
as $\nabla$-module on $\ti{x}_K \times A^1_K(I)$ is the class of 
$\Delta$, which is equal to $C$. (See the definition of the exponent given 
in Subsection 1.4.) Hence 
$\{V_i\}_{i\in\N}$ satisfies the condition we need. 
\end{proof}

\begin{rem}
If one is interested only in the proof of Theorem \ref{mainthm}, 
Theorem \ref{thm1} is actually unnecessary 
(although we need some arguments in the proof there). 
We included it because we think that it is 
interesting itself and because 
we think that (some generalization of) this result would be useful 
in the study of overconvergent isocrystals. 
\end{rem}

\subsection{Proof of the main theorem and a variant}

In this subsection, we give a proof of Theorem \ref{mainthm} by 
using the results in the previous subsection, and give also a variant 
of Theorem \ref{mainthm} as a corollary, which treats the case 
where $k$ is not necessarily uncountable. First we give a proof 
of Theorem \ref{mainthm}: 

\begin{proof}[Proof of Theorem \ref{mainthm}]
First we prove (1)$\,\Longrightarrow\,$(2). Since $\cE$ is $\Sigma$-unipotent 
by assumption, there exists an isocrystal $\ol{\cE}$ on 
$((\ol{X},M_{\ol{X}})/O_K)_{\conv}$ with exponents in $\Sigma$ which extends 
$\cE$, by Theorem \ref{mainsigma}. 
Then, $\iota^*\ol{\cE}$ gives an isocrystal $\ol{\cE}$ on 
$((\ol{C},M_{\ol{C}})/O_K)_{\conv}$ with exponents in $\iota^*\Sigma$ 
which extends $\iota^*\cE$. Hence $\iota^*\cE$ is $\iota^*\Sigma$-unipotent
 again by Theorem \ref{mainsigma}. \par 
Next we prove (2)$\,\Longrightarrow\,$(1). By definition of 
$\Sigma$-unipotence given in Definition \ref{defsigmon}, 
it suffices to prove 
the assertion locally on $\ol{X}-Z_{\sing}$. 
Hence we may assume that $\ol{X}$ is connected and affine, 
$Z$ is a non-empty connected smooth divisor in $\ol{X}$ and that 
$Z \hra \ol{X}$ admits a coordinatized tubular neighborhood, that is, 
we may assume that we are in the situation of Hypothesis \ref{hyp}. 
Then, for each separable closed point $y$ in $Y$, $\cE$ restricts to 
the overconvergent isocrystal $\cE_y$ on 
$(X_y, \ol{X}_y)$ having $\Sigma$-unipotent monodromy. 
Let $\ti{y}$ be any lift of $y$ in $\cY$. 
By Propositions 
\ref{ocsolv}, \ref{cmunip} and \ref{wdsigmon}, 
the associated $\nabla$-module $E_{\cE,\ti{y}}$ on 
$\cZ_{\ti{y},K} \times A^1_K[\lam,1)$ is solvable with highest ramification 
$0$ such that $\Exp(E_{\cE,\ti{y}}) \in \ol{\Sigma}$. 
Then, by Theorem \ref{thm2}, the $\nabla$-module $E_{\cE,L}$ on 
$A^1_L[\lam,1)$ associated to $\cE$ is solvable with highest ramification 
break $0$ such that $\Exp(E_{\cE,L}) \in \ol{\Sigma}$. By 
Propositions \ref{generization} and \ref{cmunip}, it implies that 
the $\nabla$-module $E_{\cE}$ on $\cZ_K \times A^1_K[\lam,1)$ 
associated to $\cE$ is $\Sigma$-unipotent for some $\lam$. Hence 
$\cE$ is $\Sigma$-unipotent, as desired. 
\end{proof}

Next we introduce several notations which we need to describe 
a variant of Theorem \ref{mainthm}. Let $K,O_K,k$ be as in Convention 
(with $k$ not necessarily uncountable) and let $X, \ol{X}, 
Z = \ol{X} - X = \bigcup_{i=1}^r Z_i, M_{\ol{X}}, 
\Sigma = \prod_{i=1}^r \Sigma_i$ be as in 
Introduction. For a field extension $k \subseteq 
k'$, let us put $O_{K'} := O_K \otimes_{W(k)} W(k')$, $K' := 
{\rm Frac}\,O_K$, $X' := X \otimes_k k', \ol{X}' := \ol{X} \otimes_k k'$, 
$Z' := Z \otimes_k k' = \bigcup_{i=1}^{r'} Z'_i$ and let $M_{\ol{X}'}$ be
 the log structure on $\ol{X}'$ associated to $Z'$ and denote the projections 
$(\ol{X}',M_{\ol{X}'}) \lra (\ol{X},M_{\ol{X}}), (X',\ol{X}') \lra 
(X,\ol{X})$ by $\pi$. Then 
$\pi$ induces a well-defined morphism 
of sets $\{1,...,r\} \lra \{1,...,r'\}$ (which we denote also by 
$\pi$) by the rule $\pi(Z'_{i}) \subseteq Z_{\pi(i)}$. 
Then we put $\Sigma' := \prod_{i=1}^{r'} \Sigma_{\pi(i)} 
\subseteq \Z_p^{r'}$. \par 
Assume we are given an open immersion of smooth $k'$-curves $C' \hra \ol{C}'$ 
such that $P' := \ol{C}' - C' = \coprod_{i=1}^s P'_i$ is a simple normal 
crossing divisor, and denote the log structure on $\ol{C}'$ assciated to 
$P'$ by $M_{\ol{C}}$. Assume also that we are given a commutative diagram 
\begin{equation}\label{diagcor}
\begin{CD}
(\ol{C}',M_{\ol{C}'}) @>{\iota}>> (\ol{X},M_{\ol{X}}) \\ 
@VVV @VVV \\ 
\Spec k' @>>> \Spec k 
\end{CD}
\end{equation}
such that $\iota$ induces an exact locally closed immersion 
$\iota': (\ol{C}',M_{\ol{C}'}) \hra (\ol{X}',M_{\ol{X}'})$. Then we can 
define $\iota^*\Sigma$ by $\iota^*\Sigma := {\iota'}^*\Sigma'$, where 
${\iota'}^*\Sigma'$ is `the pull-back of $\Sigma'$ by the exact locally 
closed immersion $\iota'$' defined in Introduction. Also, $\iota$ 
defines a morphism of pairs $(C',\ol{C}') \lra (X,\ol{X})$ (which we also 
denote by $\iota$) and hence, for an overconvergent isocrystal $\cE$ on 
$(X,\ol{X})/K$, we can define the pull-back $\iota^*\cE$, which is 
an overconvergent isocrystal on $(C',\ol{C}')/K'$. With this notation, 
we can state a variant of our main theorem as follows: 

\begin{cor}\label{maincor}
Let $K,k,X,\ol{X},M_{\ol{X}},\Sigma$ be as above $(k$ is not necessarily 
uncount\-able$)$. Then, for an overconvergent isocrystal $\cE$ on 
$(X,\ol{X})/K$, the following are equivalent$:$ 
\begin{enumerate} 
\item $\cE$ has $\Sigma$-unipotent monodromy. 
\item For any field extension $k \subseteq k'$, 
for any $(C',\ol{C}'), M_{\ol{C}'}$ over $k'$ as above and for any 
diagram \eqref{diagcor} as above, $\iota^*\cE$ has $\iota^*\Sigma$-unipotent 
monodromy. 
\end{enumerate}
\end{cor}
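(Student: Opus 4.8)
The plan is to reduce Corollary \ref{maincor} to Theorem \ref{mainthm} by enlarging the residue field to an uncountable extension and then descending. The implication (1)$\,\Longrightarrow\,$(2) is the direct generalization of the corresponding implication in Theorem \ref{mainthm}. Assuming $\cE$ has $\Sigma$-unipotent monodromy, Theorem \ref{mainsigma} gives an isocrystal $\ol{\cE}$ on $((\ol{X},M_{\ol{X}})/O_K)_{\conv}$ with exponents in $\Sigma$ extending $\cE$. For any field extension $k \subseteq k'$, the base change of $\ol{\cE}$ along $\pi$ is an isocrystal on $((\ol{X}',M_{\ol{X}'})/O_{K'})_{\conv}$ with exponents in $\Sigma'$, and its pull-back by the exact locally closed immersion $\iota'$ is an isocrystal with exponents in ${\iota'}^*\Sigma' = \iota^*\Sigma$ extending $\iota^*\cE = {\iota'}^*\cE'$. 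Applying Theorem \ref{mainsigma} once more over $k'$ shows that $\iota^*\cE$ has $\iota^*\Sigma$-unipotent monodromy.

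For (2)$\,\Longrightarrow\,$(1), I would first fix an uncountable field extension $k \subseteq k'$ and set $\cE' := \pi^*\cE$, an overconvergent isocrystal on $(X',\ol{X}')/K'$. The crucial observation is that condition (2) of the corollary for $\cE$ implies condition (2) of Theorem \ref{mainthm} for $\cE'$: given any smooth curve $(C,\ol{C})$ over $k'$ and any exact locally closed immersion $\iota': (\ol{C},M_{\ol{C}}) \hra (\ol{X}',M_{\ol{X}'})$, composing with $\pi$ yields a diagram of the shape \eqref{diagcor}, so the hypothesis gives that ${\iota'}^*\cE' = \iota^*\cE$ has $\iota^*\Sigma = {\iota'}^*\Sigma'$-unipotent monodromy. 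Since $k'$ is uncountable, Theorem \ref{mainthm} then shows that $\cE'$ has $\Sigma'$-unipotent monodromy.

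It remains to descend $\Sigma'$-unipotent monodromy of $\cE'$ to $\Sigma$-unipotent monodromy of $\cE$. By Proposition \ref{wdsigmon} this is local, so I would place myself in the situation of Hypothesis \ref{hyp} over $k$, with associated $\nabla$-module $E_{\cE}$ on $\cZ_K \times A_K^1[\lam,1)$ and its generic realization $E_{\cE,L}$ on $A_L^1[\lam,1)$. Base change along $k \subseteq k'$ produces the corresponding objects for $\cE'$; choosing a connected component of $\cZ \wh{\otimes}_{O_K} O_{K'}$ lying over $\cZ$ gives an isometric field extension $L \hra L'$ with $E_{\cE',L'} = E_{\cE,L} \otimes_L L'$ (and the relevant local index $\Sigma_i$ is unchanged on every such component). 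Since $\cE'$ has $\Sigma'$-unipotent monodromy, $E_{\cE',L'}$ is $\Sigma_i$-unipotent, hence by Proposition \ref{cmunip} it satisfies the Robba condition with $\Exp(E_{\cE',L'}) \in \ol{\Sigma}$. Both properties are insensitive to the isometric extension $L \hra L'$: the radii $R(\cdot,\rho)$ and the matrices $Y_{\e}(x,y)$, $S_{h,\Delta_h}$ are computed from the same $G_n$ via $\rho$-Gauss norms, which are unchanged in $L'$. Thus $E_{\cE,L}$ satisfies the Robba condition with $\Exp(E_{\cE,L}) \in \ol{\Sigma}$, so $E_{\cE,L}$ is $\Sigma_i$-unipotent by Proposition \ref{cmunip}, and Proposition \ref{generization} upgrades this to $\Sigma_i$-unipotence of $E_{\cE}$. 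Hence $\cE$ has $\Sigma$-unipotent monodromy.

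The main obstacle I anticipate is precisely this descent, and more specifically the verification that the Robba condition, the highest ramification break and the exponent $\Exp$ are invariant under the isometric scalar extension $L \hra L'$ induced by $k \subseteq k'$; once this invariance is established, Propositions \ref{cmunip} and \ref{generization} close the argument mechanically. A secondary technical point is the compatibility of the supremum norms under base change, ensuring that $L \hra L'$ is genuinely isometric, together with the matching of the frames of Hypothesis \ref{hyp} with their base changes; these are of the same nature as the compatibilities already exploited in Remark \ref{lrem} and in the proof of Theorem \ref{thm1}.
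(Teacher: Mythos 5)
Your proposal is correct, and its global structure --- pass to an uncountable extension $k \subseteq k'$, apply Theorem \ref{mainthm} to $\pi^*\cE$ over $k'$, then descend to $k$ --- is exactly that of the paper; the two arguments diverge only in how the descent is carried out. You convert $\Sigma_i$-unipotence of $E_{\cE',L'}$ into the Robba condition plus $\Exp(E_{\cE',L'}) \in \ol{\Sigma}$ via Proposition \ref{cmunip}, transport both along the isometric extension $L \hra L'$, and convert back; this forces you to prove the invariance statement you yourself flag as the main obstacle. That statement is true (with $E_{\cE',L'} = E_{\cE,L}\otimes_L L'$ the radii are computed from the same matrices $G_n$ with unchanged Gauss norms, and for the exponent any admissible pair $(\Delta,(S_h))$ for $E_{\cE,L}$ stays admissible for $E_{\cE',L'}$, so non-emptiness plus the fact that both admissible sets lie in single $\os{e}{\sim}$-classes forces $\Exp(E_{\cE,L})=\Exp(E_{\cE',L'})$), but the paper avoids this layer entirely by a choice-of-field trick: it fixes a single field $L$ containing $\Gamma(\cQ''_{\alpha,K},\cO)$ (the base-changed side) whose norm restricts to the supremum norm there, observes that the composite $\Gamma(\cQ_{\alpha,K},\cO)\lra\Gamma(\cQ''_{\alpha,K},\cO)\hra L$ still realizes the supremum norm of the original side, and notes that the restriction of $E'_{\cE,\alpha}$ to $A^1_L[\lam,1)$ is \emph{literally} the restriction of $E_{\cE,\alpha}$ to $A^1_L[\lam,1)$; Proposition \ref{generization} then finishes at once, with no appeal to $\Exp$ or Proposition \ref{cmunip}. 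Two small points in your version deserve attention. First, the $L$ of Hypothesis \ref{hyp} is an arbitrary admissible field, so there need be no embedding $L \hra L'$ at all; you should first reduce (harmlessly, by Proposition \ref{generization}, since unipotence over any one admissible field suffices) to $L$ and $L'$ being the completions of the fraction fields of $\Gamma(\cZ_K,\cO)$ and $\Gamma(\cZ''_{K'},\cO)$ with respect to the supremum norms, after which the required isometry is the same supremum-norm compatibility the paper asserts. Second, your ``connected component of $\cZ\wh{\otimes}_{O_K}O_{K'}$'' should, as in the paper, be realized by shrinking the ambient formal scheme to an affine open on which the zero locus of $t$ is irreducible, so that Proposition \ref{wdsigmon} is applied to an honest charted smooth standard small frame over $O_{K'}$.
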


\begin{proof}
The proof of the implication (1)$\,\Longrightarrow\,$(2) is the same as 
the proof in Theorem \ref{mainthm}. So we omit it and only give a proof of 
(2)$\,\Longrightarrow\,$(1). Fix a field extension $k \hra k'$ such that 
$k'$ is uncountable and define $X',\ol{X}', Z' = \bigcup_{i=1}^{r'}Z'_i, 
M_{\ol{X}'}, \pi:(X',\ol{X}') \lra (X,\ol{X})$ 
as above. Then, for $C',\ol{C}',M_{\ol{C}'}$ over $k'$ as above 
and for any exact locally closed immersion 
$\iota':(\ol{C}',M_{\ol{C}'}) \lra (\ol{X}', M_{\ol{X}'})$, 
$\iota^*\cE = {\iota'}^*\pi^*\cE$ has $\iota^*\Sigma = 
{\iota'}^*\Sigma'$-unipotent monodromy (with $\Sigma'$ defined as above). Hence, by 
Theorem \ref{mainthm}, $\pi^*\cE$ has $\Sigma'$-unipotent monodromy. 
So, to prove the corollary, it suffices to prove the following claim: \\
\quad \\
{\bf claim.} \,\,\, With the above notation, if $\pi^*\cE$ has 
$\Sigma'$-unipotent monodromy, $\cE$ has $\Sigma$-unipotent 
monodromy. \\
\quad \\
We prove the claim. Let us take 
an affine open covering 
$\ol{X}-Z_{\sing} = \bigcup_{\alpha\in \Delta} \ol{U}_{\alpha}$, 
put $U_{\alpha} := X \cap \ol{U}_{\alpha}$ and take 
charted smooth standard small frames 
$(
(U_{\alpha},\ol{U}_{\alpha},\cP_{\alpha},i_{\alpha},j_{\alpha}), 
t_{\alpha})$ enclosing 
$(U_{\alpha},\ol{U}_{\alpha})$ $(\alpha \in \Delta)$. 
Let us put $\Delta' := \{\alpha \in \Delta \,\vert\, 
Z \cap \ol{U}_{\alpha} \not= \emptyset\}$. For $\alpha \in \Delta'$, 
let $\cQ_{\alpha} \subseteq \cP_{\alpha}$ be the zero locus of 
$t_{\alpha}$, denote the $\nabla$-module on 
$\cQ_{\alpha,K} \times A^1_K[\lam,1)$ (for some $\lam$) induced by $\cE$ 
by $E_{\cE,\alpha}$ and let $a_{\alpha}$ be the unique index satisfying 
$\ol{U}_{\alpha} \cap Z \subseteq Z_{a_{\alpha}}$. Let 
$(
(U'_{\alpha},\ol{U}'_{\alpha},\cP'_{\alpha},i'_{\alpha},j'_{\alpha}), 
t'_{\alpha}), \cQ'_{\alpha}$ be the base change of 
$(
(U_{\alpha},\ol{U}_{\alpha},\cP_{\alpha},i_{\alpha},j_{\alpha}), 
t_{\alpha}), \cQ_{\alpha}$ by $\Spf O_{K'} \lra \Spf O_K$ respectively, 
let $\cP''_{\alpha} \subseteq 
\cP'_{\alpha}$ be a non-empty affine open formal subscheme such that 
$\cQ''_{\alpha} := \cQ'_{\alpha} \cap \cP''_{\alpha}$ is irreducible and let 
us put 
$$(
(U''_{\alpha},\ol{U}''_{\alpha},\cP''_{\alpha},i''_{\alpha},j''_{\alpha}), 
t''_{\alpha}) := 
 (
(U'_{\alpha} \cap \cP''_{\alpha}, \ol{U}'_{\alpha} \cap \cP''_{\alpha}, 
\cP''_{\alpha},i'_{\alpha}|_{\ol{U}''_{\alpha}},
j'_{\alpha}|_{U''_{\alpha}}), t'_{\alpha}|_{\cP''_{\alpha}}). $$
Let $E'_{\cE, \alpha}$ be the restriction of $E_{\cE,\alpha}$ to 
$\cQ''_{\alpha,K} \times A^1_K[\lam,1)$ (which is nothing but the 
$\nabla$-module induced by $\pi^*\cE$) and 
let $a'_{\alpha}$ be the unique index satisfying 
$\ol{U}''_{\alpha} \cap Z' \subseteq Z'_{a'_{\alpha}}$. 
(Then we have $\pi (a'_{\alpha}) = a_{\alpha}$.) 
Let us take an injection $\Gamma(\cQ''_{\alpha,K},\cO) \hra L$ into 
a field complete with respect to a norm which restricts to the 
supremum norm on $\cQ''_{\alpha,K}$. Then the composite 
$$ 
\Gamma(\cQ_{\alpha,K},\cO) \lra \Gamma(\cQ'_{\alpha,K},\cO) \lra 
\Gamma(\cQ''_{\alpha,K},\cO) \hra L $$
is also an injection such that the norm on $L$ restricts to 
the supremum norm. Now let us assume that $\pi^*\cE$ is 
$\Sigma'$-unipotent. Then, by Proposition \ref{wdsigmon}, 
$E'_{\cE,\alpha}$ is $\Sigma_{\pi(a'_{\alpha})}=
\Sigma_{a_{\alpha}}$-unipotent. So 
the restriction of $E'_{\cE,\alpha}$ to $A^1_L[\lam,1)$ is 
$\Sigma_{a_{\alpha}}$-unipotent. In other words, 
the restriction of $E_{\cE,\alpha}$ to $A^1_L[\lam,1)$ is 
$\Sigma_{a_{\alpha}}$-unipotent. Then, by Proposition 
\ref{generization}, $E_{\cE,\alpha}$ is 
$\Sigma_{a_{\alpha}}$-unipotent on $\cQ_{\alpha,K} \times A^1_K[\lam',1)$ 
for $\lam' \in (\lam, 1) \cap \Gamma^*$. Since this is true for any $\alpha$, 
we can conclude that $\cE$ has $\Sigma$-unipotent monodromy, as desired. 
So we are done. 
\end{proof}

\end{document}